\newtheorem{mytheo}{Theorem}[section]
\newtheorem{mydef}[mytheo]{Definition}
\newtheorem{algo}[mytheo]{Algorithm}
\newcommand{\eps}{\epsilon}
\newcommand{\norm}[1]{\left\Vert#1\right\Vert}
\newcommand{\RR}{\mathbb{R}}
\numberwithin{equation}{section}
\begin{document}

\title[Continuous-stage symplectic adapted exponential methods]{Continuous-stage symplectic adapted exponential\\ methods for charged-particle dynamics with\\ arbitrary electromagnetic fields}

\author[T. Li]{Ting Li}\address{\hspace*{-12pt}T.~Li: School of Mathematics and Statistics, Xi'an Jiaotong University, 710049 Xi'an, China}
\email{litingmath@stu.xjtu.edu.cn}

\author[B. Wang]{Bin Wang}\address{\hspace*{-12pt}Corresponding author. B.~Wang: School of Mathematics and Statistics, Xi'an Jiaotong University, 710049 Xi'an, China}
\email{wangbinmaths@xjtu.edu.cn}

%

\date{}

\dedicatory{}

\begin{abstract}
This paper is devoted to the numerical  symplectic  approximation of the charged-particle
dynamics (CPD) with arbitrary electromagnetic fields. By utilizing continuous-stage methods and exponential integrators,
a general class of symplectic methods is formulated for CPD under a homogeneous magnetic field.  Based on  the derived symplectic conditions,  two practical  symplectic methods up to order four are constructed where the error estimates show that the proposed second order scheme has a uniform accuracy in the position w.r.t. the strength of the magnetic field. Moreover, the symplectic methods are extended to CPD under  non-homogeneous magnetic fields and three algorithms are formulated. Rigorous error estimates are investigated for the proposed methods and one method is proved to have a uniform accuracy in the  position w.r.t. the strength of the magnetic field.
 Numerical experiments are provided for CPD under  homogeneous and non-homogeneous magnetic fields, and the numerical results support the theoretical analysis and demonstrate the remarkable numerical behavior of our methods.
 \\ \\
{\bf Keywords:} Charged particle dynamics, Symplectic methods,  Uniform error bounds, Exponential methods, Arbitrary electromagnetic fields. \\ \\
{\bf AMS Subject Classification:} 65P10, 65L05, {65L20,} 78A35.
\end{abstract}

\maketitle

\section{Introduction}\label{sec:intro}
In this article, we are concerned with  numerical symplectic methods for the following charged-particle dynamics (CPD)
 (see {\cite{Benettin94,Hairer2018,Hairer2017,Lubich2017,Hairer2020,Lubich2020,WZ}})
 \begin{equation}\label{charged-particle sts-cons}
\begin{array}[c]{ll}
{\ddot{x}(t)=\dot{x}(t) \times  \frac{B(x(t))}{\epsilon} +F(x(t))}, \quad
x(0)=x_0,\quad \dot{x}(0)=\dot{x}_0,
\end{array}
\end{equation}
where  $x(t)\in \RR^3$  represents  the position of a particle
moving in an electro-magnetic field, the
magnetic field is described by $B(x) = \nabla_x \times A(x)$ with the vector potential
$A(x) \in \RR^3 $,  $\epsilon \in(0,1]$ is a dimensionless parameter  which is related to the strength of the magnetic field, and $F(x)$ is the nonlinear function of the form $F(x)=-\nabla U(x)$ with scalar potential $U(x)$. The initial data $x_0$ and $\dot{x}_0$ are
two given real-valued vectors independent of $\eps$.  In this paper, we denote the {magnetic field} $B(x)$ by
 $B(x)=\big(B_1(x),B_2(x),B_3(x)\big)^{\intercal}$, where $B_i(x) \in \RR $ for $i=1,2,3.$ Then it follows from  the definition of the cross
 product that
$ \dot{x} \times  B(x) =  \tilde{B}(x) \dot{x},$ where the skew symmetric
matrix $\tilde{B}(x)$ is given by
$\tilde{B}(x)=\left(
                   \begin{array}{ccc}
                     0 & B_3(x) & -B_2(x) \\
                     -B_3(x) & 0 & B_1(x) \\
                     B_2(x) & -B_1(x) & 0 \\
                   \end{array}
                 \right).
$
According to the conjugate momentum $ p =  v + \frac{A(x)}{\epsilon}=v
-\frac{\tilde{B}(x)}{2\epsilon}x$, the CPD
\eqref{charged-particle sts-cons} can be converted into a
Hamiltonian system with the non-separable Hamiltonian
 \begin{equation}\label{H}H(x,p)=\frac{1}{2} \norm{p+\frac{\tilde{B}(x)}{2\epsilon}x}^2 + U(x),\end{equation}
 where $\norm{ \cdot}$ denotes the Euclidean norm.
 For  the non-separable
Hamiltonian \eqref{H}, it is well known that its  flow is
symplectic (\cite{Hairer2002}), i.e., it preserves the differential 2-form
$\sum\limits_{i=1}^{3} dx_i\wedge dp_i.$ In this work, we are devoted to the formulation and analysis of  a novel kind of symplectic methods
for CPD.

{The CPD  \eqref{charged-particle sts-cons}   arises in many applications  such as plasma physics, astrophysics and  magnetic fusion devices} (see, e.g.
{\cite{Arnold97,Benettin94,Cary2009,Northrop63}}) {and  a large variety of methods has
been constructed and developed for numerical solutions of the system}. For the normal magnetic field $\epsilon \approx 1$,
the Boris method proposed in
\cite{Boris} is popular due to its simplicity and good properties (cf. \cite{Hairer2017, Qin2013}). Other classical schemes have also been proposed for CPD such as multistep methods  \cite{Lubich2017,wangwu2020}, splitting methods \cite{Knapp} and so on.
In the recent few {decades}, structure-preserving  methods for differential equations have received more and more attention (\cite{bao-zhao,Feng1985,E. Hairer, Hairer2002,S. MacNamara,Sanz-Serna1988,Suris}). There are various structure-preserving  methods which have been applied to CPD,   including volume-preserving methods
\cite{He2015}, energy-preserving integrators \cite{Brugnano2020, Brugnano2019, li2022,Chacon}, symmetric methods \cite{Lubich2017, wangwu2020} and symplectic methods  \cite{ S. Blanes2012, He2017,Y. Shi, Tao2016, Xiao,Xiao21,Zhang2016}. It is noted that these structure-preserving  methods are all designed for CPD with normal magnetic field {$ \epsilon \approx 1$} and no analysis has been given for  the strong regime  $0<\epsilon \ll 1$.   If one considers the  methods mentioned above for CPD with  $0<\eps \ll 1$,
the accuracy  usually depends on  $1/\eps^j$ for some $j>0$ and this makes the methods inefficient   for small $\eps$.
Concerning the methods for CPD with strong regime  $0<\epsilon \ll 1$,  to the best of my knowledge,  most existed algorithms are devoted to the accuracy or long time near conservation   \cite{VP1,Zhao,VP4,VP5,VP6,Hairer2018,Lubich2020,Lubich2022,WZ}. Structure-preserving  methods have not been considered and analysed for CPD with strong regime  $0<\epsilon \ll 1$.

From the point of Hamiltonian system,  it is well known that symplecticity is a very
important property, which has been
investigated by many researchers (\cite{Feng1985, Webb2014}). Various symplectic methods have been derived such as
symplectic Runge-Kutta  methods
\cite{Sanz-Serna1988}, symplectic  Runge-Kutta-Nystr\"{o}m  methods
\cite{Suris}, and continuous-stage symplectic methods  \cite{E. Hairer, Miyatake, Tang2018}.
However, as stated above,  symplectic methods for CPD \cite{He2017, Y. Shi, Tao2016, Xiao,Xiao21,Zhang2016}  have only been designed for
the system under normal magnetic fields and there is no numerical approximation or error analysis  for   strong magnetic fields.

In this article, we  construct a new kind of  symplectic methods for CPD with  arbitrary electromagnetic fields and  {establish} the rigorous error estimates. To formulate the methods, the ideas of exponential methods   (see, e.g. \cite{bao-cai, Frenod, Hairer2002, M.Hochbruck2017, Mei2017, wang-2016}) and continuous-stage  methods
will be employed. Moreover, the convergence of the obtained symplectic methods is rigorously researched, {and the error estimates show that some proposed methods {have}  a uniform accuracy in the position w.r.t. $\eps$. This means that symplectic methods with uniform (w.r.t. $\eps$) error bounds are achieved {in this paper, and performances superior to the standard symplectic methods are
demonstrated on some examples.}  

In the analysis of symplecticity, we consider the differential 2-form for the methods. However, since the
 Hamiltonian is non-separable and the new methods are exponential type,  some challenges and
difficulties emerge in the achievement of symplectic conditions. To make the derivation go smoothly,
we  consider  some transformations of the original system and the methods, and make well use of the antisymmetry of $\tilde{B}(x)$.
{These} transformations  can keep the symplecticity of the methods, and thus
symplectic methods can be formulated by deriving the symplectic conditions for the transformed methods.
Concerning the convergence of the methods for CPD under nonhomogeneous magnetic fields,
 exponential variation-of-constants formula unfortunately does not hold and thence the local errors of exponential methods cannot be derived.
 In order to  overcome this difficulty, we design a novel system called as linearized system and establish exponential variation-of-constants formula for it. The error between this linearized system and {the original} one is deduced and based on which, we
 manage to derive the  convergence of the methods. Moreover, following this approach, a method with uniform second order error bound and very good long time energy conservation  is obtained for CPD under maximal ordering magnetic fields  {\cite{scaling1,Lubich2020,scaling2}}. For the system of this case, two filtered Boris algorithms were presented in \cite{Lubich2020} {and they} were proved to have uniform second order error bound but without long time behaviour. An energy-preserving method was constructed in \cite{WZ} and  uniform first order error estimate was established. Compared with {the methods derived in \cite{Lubich2020,WZ}}, the method proposed in this paper {has both uniform second order accuracy and symplecticity, and is} a brand-new method to CPD under maximal ordering magnetic fields.}


The rest of this paper is organised as follows.   In Section \ref{sec1: methods},  for solving CPD with homogeneous magnetic field, we propose the
scheme of continuous-stage adapted exponential methods and derive its symplectic conditions. Then two practical  symplectic methods up to order four  {are presented in
Section \ref{sec:prac meth} and   their properties including convergence and implementations are also studied there}. At the end of Section \ref{sec:prac meth}, { one
numerical experiment is provided and the results  show the favorable   behaviour of
the new methods in    comparison  with the Boris method and two symplectic Runge-Kutta methods}. Moreover,  {the extension and application of  the proposed symplectic methods to CPD with non-homogeneous magnetic fields  are discussed in Section \ref{sec:general}. Three practical methods as well as their error estimates are proposed and two numerical tests are
given to demonstrate that they are sharp.} Section \ref{sec:conclusions} is
devoted to the conclusions of this paper.

\section{{Adapted exponential methods} and symplectic conditions}\label{sec1: methods}
In this section, we shall propose the novel methods for  \eqref{charged-particle sts-cons}
in a homogeneous magnetic field $B=\big(B_1,B_2,B_3\big)^{\intercal}$ and analyse their symplectic conditions.
{We start by  the  variation-of-constants formula of the CPD \eqref{charged-particle sts-cons} which reads (see
\cite{Lubich2020})
\begin{equation}\label{VOC}
\begin{aligned}
x(t_n+h)&=x(t_n) +h\varphi_1( hM ) v(t_n)+h^2
\int_{0}^1(1-\tau) \varphi_1((1-\tau) hM ) F(x(t_n+h\tau))
d\tau,\\
v(t_n+h)&=\varphi_{0}( hM )v(t_n)+h
\int_{0}^1
 \varphi_{0}((1-\tau) hM ) F(x(t_n+h\tau))  d\tau,
\end{aligned}
\end{equation}
for any step size $h\geq0$ and $t_n=nh$, where $v(t):=\dot{x}(t)$,
  $M=\frac{\tilde{B}}{\epsilon}$ and the $\varphi$-functions are defined by (see
\cite{Hochbruck2005})
\begin{equation*}\label{phi}
 \varphi_0(z)=e^{z},\ \ \varphi_k(z)=\int_{0}^1
e^{(1-\sigma)z}\frac{\sigma^{k-1}}{(k-1)!}d\sigma, \ \
k=1,2,\ldots.
\end{equation*}

To derive the novel methods, we consider  the variation-of-constants formula  \eqref{VOC} and utilize the idea of continuous-stage methods, which leads to the following definition.

\begin{mydef}
\label{scheme 1}  \textbf{(Adapted exponential
 methods)} By denoting
the numerical approximation $x_n\approx x(t_n),\, v_n\approx v(t_n)$ and choosing  $v_0=\dot{x}_0$, a continuous-stage adapted exponential
 method for solving \eqref{charged-particle sts-cons} is
defined {as}
\begin{equation}
\begin{aligned}\label{CSAEI}
&X_{\tau}=x_{n}+ \tau h\varphi_{1}(\tau hM ) v_{n}+
h^2 \int_{0}^{1}{\alpha}_{\tau \sigma}(hM)F (X_\sigma)d\sigma,\\
&x_{n+1}=x_{n}+ h\varphi_1(hM)
v_{n}+h^2 \int_{0}^{1}\beta_{\tau}(hM)F (X_{\tau})d\tau,\ \ v_{n+1}=\varphi_0(hM)v_{n}+h
\int_{0}^{1}\gamma_{\tau}(hM)F(X_{\tau})d\tau,
\end{aligned}
\end{equation}
where $h$ is the step size, $ X_{\tau} $ is a function  about $\tau $, and
 the coefficients $ {\alpha}_{\tau \sigma}(hM) $, $ \beta_{\tau}(hM) $ and $
\gamma_{\tau}(hM) $ are functions {depending} on $ hM$.
For the  method adapted to the Hamiltonian system \eqref{H}, we
consider the continuous-stage adapted exponential
 method \eqref{CSAEI}  as well as a momentum calculation $$p_{n+1} = v_{n+1} -\frac{\tilde{B}}{2\epsilon}
x_{n+1}.$$
\end{mydef}

In what follows, we derive the symplectic conditions of the  method given in Definition \ref{scheme 1}.
When applied to the non-separable Hamiltonian \eqref{H}, the  method is called as symplectic
if it preserves the symplecticity exactly, i.e. (\cite{Hairer2002}),
\begin{equation*}\label{NMSD}
\sum\limits_{J=1}^{3}dx_{n+1}^{J} \wedge
dp_{n+1}^{J}=\sum_{J=1}^{3}dx_{n}^{J} \wedge dp_{n}^{J},
\end{equation*}
where the superscript $(\cdot)^{J}$ denotes the $J$th component of a vector.
To prove this statement, we first make some transformations of the system
\eqref{charged-particle sts-cons}  and  of the method \eqref{CSAEI}.
Since $\tilde{B}$ is a skew-symmetric matrix, it
can be expressed as $ \tilde{B}=K \Omega K^\textup{H}, $ where $K$
is a unitary matrix and $\Omega=
\textmd{diag}(-{\norm{B}}\mathrm{i},0,{\norm{B}}\mathrm{i}) $.
Now we denote the new variables as
 \begin{equation}\label{change of variable}
 \tilde{x}(t)= K^\textup{H} x(t),\quad \tilde{v}(t)= K^\textup{H} v(t).
 \end{equation}
Then the system
\eqref{charged-particle sts-cons} becomes
\begin{equation*}\label{necharged-sts-first order}
\begin{array}[c]{ll}
\frac{d}{dt }\left(
  \begin{array}{c}
    \tilde{x} \\
    \tilde{v} \\
  \end{array}
\right)  =\left(
            \begin{array}{cc}
              0 & I \\
             0  & \tilde{\Lambda} \mathrm{i} \\
            \end{array}
          \right) \left(
  \begin{array}{c}
    \tilde{x} \\
    \tilde{v} \\
  \end{array}
\right)+\left(
  \begin{array}{c}
   0 \\
    \tilde{F}(\tilde{x}) \\
  \end{array}
\right),\quad \left(
                \begin{array}{c}
                  \tilde{x}_0 \\
                  \tilde{v}_0 \\
                \end{array}
              \right)=\left(
                        \begin{array}{c}
                K^\textup{H} x_0 \\
                 K^\textup{H} \dot{x}_0\\
                        \end{array}
                      \right),
\end{array}
\end{equation*}
where $\tilde{\Lambda}=
\textmd{diag}(-\tilde{\zeta},0,\tilde{\zeta})$ with
$\tilde{\zeta}=\frac{{\norm{B}}}{\epsilon}$ and
$\tilde{F}(\tilde{x})=K^\textup{H} F(K \tilde{x})=
-\nabla_{\tilde{x}} U(K\tilde{x})$.

It is noted that    the vector $\tilde{x}$ is denoted by $\tilde{x}=(\tilde{x}^{1},\tilde{x}^{2},\tilde{x}^{3})^{\intercal}$ and all
the vectors in $\mathbb{R}^{3}$ or $\mathbb{C}^{3}$ use the same notation in this paper.
For the  variables \eqref{change of variable} and the property of $K$, we
observe that
$\tilde{x}^{1}=\bar{\tilde{x}}^{3}$,
$\tilde{v}^{1}=\bar{\tilde{v}}^{3}$, and $\tilde{x}^{2}$,
$\tilde{v}^{2} \in \mathbb{R}$.  We shall denote $\bar{\tilde{x}}^{3}$
as the conjugate of $\tilde{x}^{3}$ and the same expression is applied to  other functions.
In the light of  \eqref{change of variable}, the corresponding Hamiltonian system \eqref{H} is given by
\begin{equation}\label{new-H}
\begin{array}[c]{ll}{\dot{\tilde{x}}} =\nabla_{\tilde{p}} \tilde{H}(\tilde{x},\tilde{p})=\tilde{p}+\frac{1}{2}
\tilde{\Lambda} \mathrm{i}\tilde{x},
\ \ \ \  {\dot{\tilde{p}}}=-\nabla_{\tilde{x}} \tilde{H}(\tilde{x},\tilde{p})=-\frac{1}{2}
\big(\tilde{\Lambda} \mathrm{i}\big)^\textup{H} \big(\tilde{p}+\frac{1}{2}
\tilde{\Lambda} \mathrm{i}
\tilde{x}\big)-\nabla_{\tilde{x}}\tilde{U}(K\tilde{x}),
\end{array}
\end{equation}with
$\tilde{H}(\tilde{x},\tilde{p}) =  \frac{1}{2} \norm{\tilde{p}+\frac{1}{2}
\tilde{\Lambda} \mathrm{i}\tilde{x}}^2+\tilde{U}(K\tilde{x}).$
For this transformed Hamiltonian  system,  the  method \eqref{CSAEI} takes the {following transformed} form
\begin{equation}\label{TRM}
\begin{aligned}
&\tilde{X}_{\tau}=\tilde{x}_{n}+ h\tau \varphi_1\big(\tau W\big) \tilde{v}_{n}+
h^2 \int_{0}^{1}{\alpha}_{\tau \sigma}\big(W\big)\tilde{F} \big(\tilde{X}_\sigma\big)d\sigma,\\
& \tilde{x}_{n+1}=\tilde{x}_{n}+ h\varphi_1\big(W\big)
\tilde{v}_{n}+h^2 \int_{0}^{1}\beta_{\tau}\big(W\big)\tilde{F} \big(\tilde{X}_{\tau}\big)d\tau,\\
&\tilde{v}_{n+1}=\varphi_0\big(W\big)\tilde{v}_{n}+h
\int_{0}^{1}\gamma_{\tau}\big(W\big)\tilde{F}\big(\tilde{X}_{\tau}\big)d\tau,\ \ \tilde{p}_{n+1} = \tilde{v}_{n+1} -\frac{1}{2}
\tilde{\Lambda}\mathrm{i}\tilde{x}_{n+1},
\end{aligned}
\end{equation}
where $W=h\tilde{\Lambda}\mathrm{i}$.

Now we present symplectic conditions of the {transformed method \eqref{TRM} and then get the symplecticity of the method} given in Definition \ref{scheme 1}.

\begin{mytheo}\label{thm: symp}  \textbf{(Symplecticity)}
{(a)} For solving {the transformed Hamiltonian {system \eqref{new-H}}, the map $(\tilde{x}_n,\tilde{p}_n ) \rightarrow (\tilde{x}_{n+1},\tilde{p}_{n+1})$ determined by the transformed method \eqref{TRM}} is
symplectic  if the coefficients satisfy
\begin{equation}\label{17}
\begin{aligned}
 (i)\quad &\gamma_{\tau}(W) -W\beta_{\tau}(W)
=d_{\tau} I, \ \ d_\tau\in \mathbb{C},\\
(ii)\quad &\gamma_{\tau}(W)  ( \bar{\varphi}_{1}(W)-\tau\bar{\varphi}_{1}(\tau W) )
=\beta_{\tau}(W)  ( e^{-W}+W
\bar{\varphi}_{1}(W)-\tau W \bar{\varphi}_{1}(\tau W) ),\\
(iii)\quad &\bar{\beta}_{\sigma}(W) \gamma_{\tau}(W) - W\bar{\beta}_{\sigma}(W)
\beta_{\tau}(W)/2 -\bar{{\alpha}}_{\tau \sigma}(W) ( \gamma_{\tau}(W)
  -W\beta_{\tau}(W) )  \\
  \ &=
\beta_{\tau}(W)  \bar{ \gamma}_{\sigma}(W) +W \beta_{\tau}(W)
\bar{\beta}_{\sigma}(W)/2 -{\alpha}_{\sigma \tau}(W)  ( \bar{\gamma}_{\sigma}(W)
  +W\bar{\beta}_{\sigma}(W)    ).
\end{aligned}
\end{equation}
{(b)} {If the transformed method \eqref{TRM} is symplectic, the map $(x_n,p_n ) \rightarrow (x_{n+1},p_{n+1})$ determined by Definition \ref{scheme 1} is
symplectic for   solving the Hamiltonian system \eqref{H}.}
\end{mytheo}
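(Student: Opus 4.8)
The plan is to verify symplecticity by directly computing the differential two-form $\omega_n := \sum_{J=1}^{3} d\tilde{x}_n^{J} \wedge d\tilde{p}_n^{J}$ and showing that the conditions (i)--(iii) force $\omega_{n+1} = \omega_n$, for part (a); then part (b) follows by a change-of-variables argument using the unitarity of $K$. Throughout I would exploit the diagonal structure of $W = h\tilde\Lambda \mathrm{i}$, so that $\varphi_0(W)$, $\varphi_1(W)$, and the coefficient matrices $\alpha_{\tau\sigma}(W),\beta_\tau(W),\gamma_\tau(W)$ all act componentwise, and the conjugacy relations $\tilde{x}^1 = \overline{\tilde{x}^3}$, etc., reduce the three complex components to essentially one independent complex pair plus one real component.

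First I would express $d\tilde{p}_{n+1}$ from the last line of \eqref{TRM}, namely $\tilde{p}_{n+1} = \tilde{v}_{n+1} - \tfrac12 \tilde\Lambda\mathrm{i}\,\tilde{x}_{n+1}$, substituting the update formulas for $\tilde{x}_{n+1}$ and $\tilde{v}_{n+1}$ in terms of $\tilde{x}_n,\tilde{v}_n$ and the stage values $\tilde{F}(\tilde{X}_\tau)$. Taking exterior derivatives, $d\tilde{x}_{n+1}$ and $d\tilde{p}_{n+1}$ become linear combinations of $d\tilde{x}_n$, $d\tilde{v}_n$, and the stage differentials $d\tilde{F}(\tilde{X}_\tau) = \tilde{F}'(\tilde{X}_\tau)\,d\tilde{X}_\tau$; here the key structural fact is that $\tilde{F}' = -\nabla^2 U(K\cdot)$ composed with $K$ is symmetric (Hessian), which is what makes the wedge-product bookkeeping collapse. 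I would then expand $\omega_{n+1} = \sum_J d\tilde{x}_{n+1}^J \wedge d\tilde{p}_{n+1}^J$ and sort the result into groups of wedge terms: those pairing $d\tilde{x}_n$ with $d\tilde{v}_n$, those pairing $d\tilde{v}_n$ with stage differentials $d\tilde{F}(\tilde{X}_\tau)$, and the double-integral terms pairing $d\tilde{F}(\tilde{X}_\sigma)$ with $d\tilde{F}(\tilde{X}_\tau)$.

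Matching $\omega_{n+1}$ to $\omega_n$ group by group should produce exactly the three stated conditions: condition (i) governs the $d\tilde{x}_n \wedge d\tilde{v}_n$ coefficient, condition (ii) the mixed $d\tilde{v}_n \wedge d\tilde{F}(\tilde{X}_\tau)$ terms, and condition (iii) the symmetric double-integral $d\tilde{F}(\tilde{X}_\sigma)\wedge d\tilde{F}(\tilde{X}_\tau)$ contribution, where antisymmetrizing in $\tau \leftrightarrow \sigma$ under the double integral is essential. I expect the main obstacle to be the algebra of the double-integral term, where one must carefully use the stage equation for $d\tilde{X}_\tau$ (which itself contains an integral over $\alpha_{\tau\sigma}$), substitute it into the wedge products, and then symmetrize the dummy variables to obtain condition (iii) in the stated form; keeping track of the conjugates $\bar\varphi_1$, $\bar\beta_\sigma$, $\bar\gamma_\sigma$ arising from the Hermitian structure of the transformed Hamiltonian is the delicate part. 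For part (b), since $K$ is unitary and the transformation \eqref{change of variable} together with $\tilde{p} = K^{\textup{H}} p$ is a linear symplectic (indeed unitary) change of coordinates, the two-form is preserved under the change of variables, so symplecticity of \eqref{TRM} transfers directly back to the map of Definition \ref{scheme 1}; this step is short and amounts to noting that $\sum_J dx_n^J \wedge dp_n^J = \sum_J d\tilde{x}_n^J \wedge d\tilde{p}_n^J$ because $K^{\textup{H}} K = I$.
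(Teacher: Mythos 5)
Your overall strategy coincides with the paper's: expand the two-form generated by the update map, sort the wedge terms into groups, match each group against conditions (i)--(iii), and handle part (b) by the unitary change of variables. However, there is a genuine error at the foundation of your argument. Since $K$ is complex unitary (not real orthogonal), the identity you invoke for part (b), $\sum_J dx_n^J \wedge dp_n^J = \sum_J d\tilde{x}_n^J \wedge d\tilde{p}_n^J$, is false: writing $x = K\tilde{x}$, $p = K\tilde{p}$ gives $\sum_J dx^J\wedge dp^J = \sum_{i,\iota}\big(K^{\intercal}K\big)_{i\iota}\, d\tilde{x}^i\wedge d\tilde{p}^\iota$, and $K^{\intercal}K \neq I$ for the genuinely complex $K$ that diagonalizes the skew-symmetric $\tilde{B}$; unitarity only gives $K^{\textup{H}}K = I$. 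The correct route, used in the paper, exploits the realness of $x$ (so $dx^J = d\bar{x}^J$) to obtain $\sum_J dx^J\wedge dp^J = \sum_J d\bar{\tilde{x}}^J\wedge d\tilde{p}^J$, i.e., the object the transformed method must preserve is the \emph{conjugated} form $\sum_J d\bar{\tilde{x}}^J\wedge d\tilde{p}^J$. This is not cosmetic: the conjugation is exactly what produces the conjugated coefficients $\bar{\varphi}_1$, $\bar{\beta}_\sigma$, $\bar{\gamma}_\sigma$, $\bar{\alpha}_{\tau\sigma}$ appearing in conditions (ii)--(iii), and it creates a group of terms proportional to $d\bar{\tilde{v}}_n^J\wedge d\tilde{v}_n^J$ which do \emph{not} vanish by antisymmetry of the wedge (as they would for the unconjugated form) and which your sketch never accounts for; in the paper this group is killed by the conjugate-pair structure $\tilde{v}^1 = \bar{\tilde{v}}^3$, $\tilde{v}^2\in\RR$ together with the equality of the scalar coefficients of components $1$ and $3$ --- a structural argument, not a coefficient condition.

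A second, related misidentification: your list of groups omits the terms pairing $d\bar{\tilde{x}}_n$ with $d\tilde{F}_\tau$, and you assign condition (i) to the $d\tilde{x}_n\wedge d\tilde{v}_n$ coefficient, where in fact that coefficient balances automatically from the $\varphi$-function identity $e^{W}-W\varphi_1(W)=I$ and needs no condition. Condition (i) is needed precisely for the omitted group: it forces the coefficient of $d\bar{\tilde{x}}_n^J\wedge d\tilde{F}_\tau^J$ to be a scalar multiple $d_\tau$ of the identity, uniform in $J$; only then can one substitute the stage equation, pass to $\sum_J d\bar{\tilde{X}}_\tau^J\wedge d\tilde{F}_\tau^J = \sum_J dX_\tau^J\wedge dF_\tau^J$, and invoke the symmetry of the Hessian of $U$ to make this group vanish. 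You do cite the Hessian symmetry, but without the scalarity supplied by (i) that argument cannot be run when the coefficients are $J$-dependent. So while your plan would reproduce conditions (ii) and (iii) once the conjugated form is used, as written both the part (b) reduction and the role of condition (i) would fail.
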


\label{necharged-sts-first order}

\begin{proof}
{ We first prove the second statement {(b)}.
 By the notation of differential
2-form and with the help of $K$,} we have
\begin{equation}
\begin{aligned}
&\sum\limits_{J=1}^{3}dx_{n+1}^{J} \wedge
dp_{n+1}^{J}=\sum\limits_{J=1}^{3}d\bar{x}_{n+1}^{J} \wedge
dp_{n+1}^{J}=\sum\limits_{J=1}^{3}d\big(\bar{K}\bar{\tilde{x}}_{n+1}\big)^{J}
\wedge
d\big(K\tilde{p}_{n+1}\big)^{J}\\
=& \sum\limits_{J=1}^{3}\bigg({ d}
\sum\limits_{i=1}^{3}\big(\bar{K}_{Ji}\bar{\tilde{x}}_{n+1}^{i}\big)\bigg)\wedge
 \bigg( {
d} \sum\limits_{\iota=1}^{3}\big(K_{J\iota}\tilde{p}_{n+1}^{\iota}\big)\bigg)
= \sum\limits_{J=1}^{3}\bigg( \sum\limits_{i=1}^{3}\big(\bar{K}_{Ji}{
d}\bar{\tilde{x}}_{n+1}^{i}\big)\bigg)\wedge
 \bigg(  \sum\limits_{\iota=1}^{3}\big(K_{J\iota}{ d}\tilde{p}_{n+1}^{\iota}\big)\bigg)\\
= & \sum\limits_{J=1}^{3}\sum\limits_{i=1}^{3}\sum\limits_{\iota=1}^{3} \bar{K}_{Ji}  K_{J\iota} \big({
d}\bar{\tilde{x}}_{n+1}^{i} \wedge  { d}\tilde{p}_{n+1}^{\iota}\big)
 =  \sum\limits_{i=1}^{3}  {
d}\bar{\tilde{x}}_{n+1}^{i} \wedge  { d}\tilde{p}_{n+1}^{i}=
\sum\limits_{J=1}^{3}  { d}\bar{\tilde{x}}_{n+1}^{J} \wedge  {
d}\tilde{p}_{n+1}^{J}.\nonumber
\end{aligned}\label{td}
\end{equation}
Likewise, one gets $ \sum\limits_{J=1}^{3}dx_{n}^{J}
\wedge dp_{n}^{J}= \sum\limits_{J=1}^{3}  {
d}\bar{\tilde{x}}_{n}^{J} \wedge { d}\tilde{p}_{n}^{J}.$ Hence {the second statement is hold.

 For the first statement {(a)},
we need} to prove
$ \sum\limits_{J=1}^{3}  { d}\bar{\tilde{x}}_{n+1}^{J} \wedge  {
d}\tilde{p}_{n+1}^{J}= \sum\limits_{J=1}^{3}  {
d}\bar{\tilde{x}}_{n}^{J} \wedge { d}\tilde{p}_{n}^{J},
$
i.e.,
\begin{equation}\label{imp res}
\begin{aligned} \sum\limits_{J=1}^{3}  { d}\bar{\tilde{x}}_{n+1}^{J} \wedge  {
d}\tilde{v}_{n+1}^{J}-\frac{1}{2} \sum\limits_{J=1}^{3}  {
d}\bar{\tilde{x}}_{n+1}^{J} \wedge  { d}
\big(\tilde{\Lambda}^{J}\mathrm{i}\tilde{x}_{n+1}^{J}\big)=
\sum\limits_{J=1}^{3}  { d}\bar{\tilde{x}}_{n}^{J} \wedge {
d}\tilde{v}_{n}^{J}-\frac{1}{2}  \sum\limits_{J=1}^{3}  {
d}\bar{\tilde{x}}_{n}^{J} \wedge { d}
\big(\tilde{\Lambda}^{J}\mathrm{i}\tilde{x}_{n}^{J}\big).
\end{aligned}
\end{equation}

 \textbf{(I) Computation of the left hand side of
\eqref{imp res}.}
 It is noted that
$ \overline{{ d}\bar{\tilde{x}}_{n}^{J} \wedge { d}
\tilde{v}_{n}^{J}} ={ d} \tilde{x}_{n}^{J} \wedge { d}
\bar{\tilde{v}}_{n}^{J}$ and $ { d}\bar{\tilde{x}}_{n}^{J} \wedge {
d}  \tilde{x}_{n}^{J} \in \mathrm{i} \mathbb{R}$, and
denote the  method by{
\begin{equation}
\begin{aligned}
&\tilde{X}^{J}_{\tau}=\tilde{x}^{J}_{n}+ h\tau \varphi_1\big(\tau{W}^{J}\big) \tilde{v}^{J}_{n}+
h^2 \int_{0}^{1}{\alpha}_{\tau \sigma}\big({W}^{J}\big)\tilde{F}^{J}_{\sigma}d\sigma,\\
&\tilde{x}^{J}_{n+1}=\tilde{x}^{J}_{n}+
h\varphi_1({W}^{J})
\tilde{v}^{J}_{n}+h^2 \int_{0}^{1}\beta_{\tau}\big({W}^{J}\big)\tilde{F}^{J}_{\tau}d\tau,\ \ \tilde{v}^{J}_{n+1}=e^{{W}^{J}}\tilde{v}^{J}_{n}+h
\int_{0}^{1}\gamma_{\tau}\big({W}^{J}\big)\tilde{F}^{J}_{\tau}d\tau,
\end{aligned}\label{a}
\end{equation}}where the superscript $(\cdot)^{J}$ is the $J$th component of a vector
or a matrix and $\tilde{F}^{J}_{\tau}$ is  the $J$th component of
$\tilde{F}\big(\tilde{X}_{\tau}\big)$.
 Based on  the above  {expression}, in what follows we
  calculate the left hand side of
\eqref{imp res}.

Inserting the scheme of \eqref{a} into \eqref{imp res}, it arrives that
\begin{equation}
\begin{aligned}\label{aaa1}
&\sum\limits_{J=1}^{3}\bigg(d\bar{\tilde{x}}_{n+1}^{J}\wedge d\tilde{v}_{n+1}^{J}-\frac{1}{2}d\bar{\tilde{x}}_{n+1}^{J}\wedge d\big(\tilde{\Lambda}^{J}\mathrm{i}\tilde{x}_{n+1}^{J}\big)\bigg)
= \sum\limits_{J=1}^{3}\bigg(e^{{W}^{J}} d\bar{\tilde{x}}_{n}^{J} \wedge
d\tilde{v}_{n}^{J}
+h\int_{0}^{1}\Big(\gamma_{\tau}\big({W}^{J}\big)
d\bar{\tilde{x}}_{n}^{J} \wedge  d\tilde{F}_{\tau}^{J}\Big)d\tau\\
&+he^{{W}^{J}} \bar{\varphi}_{1}\big({W}^{J}\big)  d\bar{\tilde{v}}_{n}^{J}  \wedge   d\tilde{v}_{n}^{J}
+h^{2}\int_{0}^{1}\Big(\bar{\varphi}_{1}\big({W}^{J}\big)\gamma_{\tau}\big({W}^{J}\big)  d\bar{\tilde{v}}_{n}^{J} \wedge  d\tilde{F}_{\tau}^{J}\Big)d\tau
+h^{2}  e^{{W}^{J}} \int_{0}^{1}\Big(\bar{\beta}_{\tau}\big({W}^{J}\big)
d\bar{\tilde{F}}_{\tau}^{J} \wedge  d\tilde{v}_{n}^{J}\Big)d\tau\\
&+h^{3}
\int_{0}^{1}\int_{0}^{1}\Big(\bar{\beta}_{\tau}\big({W}^{J}\big)   \gamma_{\sigma}\big({W}^{J}\big)
d\bar{\tilde{F}}_{\tau}^{J} \wedge d\tilde{F}_{\sigma}^{J}\Big)d\tau d\sigma
- \frac{1}{2}d\bar{\tilde{x}}_{n}^{J}\wedge  d\big(\tilde{\Lambda}^{J}\mathrm{i}\tilde{x}_{n}^{J}\big)
-\frac{1}{2} {W}^{J}  \varphi_{1}\big({W}^{J}\big) d\bar{\tilde{x}}_{n}^{J}\wedge   d\tilde{v}_{n}^{J}\\
&-\frac{1}{2}h {W}^{J}  \int_{0}^{1}\Big(\beta_{\tau}\big({W}^{J}\big)
d\bar{\tilde{x}}_{n}^{J}\wedge d\tilde{F}_{\tau}^{J}\Big)d\tau
-\frac{1}{2}{W}^{J}\bar{\varphi}_{1}\big({W}^{J}\big)    d\bar{\tilde{v}}_{n}^{J}  \wedge d\tilde{x}_{n}^{J}
-\frac{1}{2}h {W}^{J}\bar{\varphi}_{1}\big({W}^{J}\big) \varphi_{1}\big({W}^{J}\big)
 d\bar{\tilde{v}}_{n}^{J} \wedge   d\tilde{v}_{n}^{J}\\
 &-\frac{1}{2}h^{2} {W}^{J}  \varphi_{1}\big({W}^{J}\big) \int_{0}^{1}\Big(\bar{\beta}_{\tau}\big({W}^{J}\big)
 d\bar{\tilde{F}}_{\tau}^{J} \wedge d\tilde{v}_{n}^{J}\Big)d\tau
 -\frac{1}{2}h^{3} {W}^{J} \int_{0}^{1}\int_{0}^{1}\Big(\bar{\beta}_{\tau}\big({W}^{J}\big)   \beta_{\sigma}\big({W}^{J}\big)
 d\bar{\tilde{F}}_{\tau}^{J} \wedge d\tilde{F}_{\sigma}^{J}\Big)d\tau d\sigma \\
 &-\frac{1}{2}h^{2} {W}^{J}\bar{\varphi}_{1}\big({W}^{J}\big)\int_{0}^{1}\Big(\beta_{\tau}\big({W}^{J}\big)   d\bar{\tilde{v}}_{n}^{J}  \wedge   d\tilde{F}_{\tau}^{J}\Big)d\tau
 -\frac{1}{2}h{W}^{J} \int_{0}^{1}\Big(\bar{\beta}_{\tau}\big({W}^{J}\big)
 d\bar{\tilde{F}}_{\tau}^{J} \wedge d\tilde{x}_{n}^{J}\Big)d\tau\bigg).
\end{aligned}
\end{equation}
Considering the properties of coefficient functions and  exterior product, we obtain
\begin{equation*}
\begin{aligned}
&-\sum\limits_{J=1}^{3}\frac{1}{2}h
{W}^{J}\bar{\varphi}_{1}\big({W}^{J}\big) d\bar{\tilde{v}}_{n}^{J}  \wedge d\tilde{x}_{n}^{J}=-\sum\limits_{J=1}^{3}\frac{1}{2}h{W}^{J}
\varphi_{1}\big({W}^{J}\big)    d\bar{\tilde{x}}_{n}^{J}  \wedge d\tilde{v}_{n}^{J},\\
&-\sum\limits_{J=1}^{3}\frac{1}{2}h{W}^{J} \int_{0}^{1}\Big(\bar{\beta}_{\tau}\big({W}^{J}\big)
d\bar{\tilde{F}}_{\tau}^{J} \wedge d\tilde{x}_{n}^{J}\Big)d\tau
=-\sum\limits_{J=1}^{3}\frac{1}{2}h{W}^{J} \int_{0}^{1}\Big(\beta_{\tau}\big({W}^{J}\big)
d\bar{\tilde{x}}_{n}^{J}\wedge d\tilde{F}_{\tau}^{J}\Big)d\tau,\\
&-\sum\limits_{J=1}^{3}h^{2}\int_{0}^{1}\Big(\bar{\beta}_{\tau}\big({W}^{J}\big)e^{{W}^{J}} d\bar{\tilde{F}}_{\tau}^{J} \wedge d\tilde{v}_{n}^{J}\Big)d\tau
=\sum\limits_{J=1}^{3}h^{2}\int_{0}^{1}\Big(\beta_{\tau}\big({W}^{J}\big)e^{-{W}^{J}} d\bar{\tilde{v}}_{n}^{J} \wedge d\tilde{F}_{\tau}^{J}\Big)d\tau,\\
&-\sum\limits_{J=1}^{3}\frac{1}{2}h^{2} {W}^{J} \varphi_{1}\big({W}^{J}\big) \int_{0}^{1}\Big(\bar{\beta}_{\tau}\big({W}^{J}\big)
d\bar{\tilde{F}}_{\tau}^{J} \wedge d\tilde{v}_{n}^{J}\Big)d\tau
=-\sum\limits_{J=1}^{3}\frac{1}{2}h^{2} {W}^{J} \bar{\varphi}_{1}\big({W}^{J}\big) \int_{0}^{1}\Big(\beta_{\tau}\big({W}^{J}\big) d\bar{\tilde{v}}_{n}^{J} \wedge d\tilde{F}_{\tau}^{J}\Big)d\tau.\\
\end{aligned}
\end{equation*}
Thus \eqref{aaa1} can be simplified as{
\begin{equation}
\begin{aligned}
&\sum\limits_{J=1}^{3}\bigg(d\bar{\tilde{x}}_{n+1}^{J}\wedge d\tilde{v}_{n+1}^{J}-\frac{1}{2}d\bar{\tilde{x}}_{n+1}^{J}\wedge d\big(\tilde{\Lambda}^{J}\mathrm{i}\tilde{x}_{n+1}^{J}\big)\bigg)\\
=&\sum\limits_{J=1}^{3}\bigg(d\bar{\tilde{x}}_{n}^{J}  \wedge d\tilde{v}_{n}^{J}
 - \frac{1}{2}d\bar{\tilde{x}}_{n}^{J}\wedge  d\big(\tilde{\Lambda}^{J}\mathrm{i}\tilde{x}_{n}^{J}\big)
+ h\int_{0}^{1}\Big(  \gamma_{\tau}\big({W}^{J}\big)-{W}^{J} \beta_{\tau}\big({W}^{J}\big)           \Big)
\big(d\bar{\tilde{x}}_{n}^{J} \wedge  d\tilde{F}_{\tau}^{J}\big)d\tau\\
&+\Big(he^{{W}^{J}} \bar{\varphi}_{1}\big({W}^{J}\big)
-\frac{1}{2}h {W}^{J}\bar{\varphi}_{1}\big({W}^{J}\big) \varphi_{1}\big({W}^{J}\big)\Big)
d\bar{\tilde{v}}_{n}^{J}  \wedge   d\tilde{v}_{n}^{J}\\
&+h^{2}\int_{0}^{1}\Big(\bar{\varphi}_{1}\big({W}^{J}\big)   \gamma_{\tau}\big({W}^{J}\big)
- \beta_{\tau}\big({W}^{J}\big)  e^{-{W}^{J}}
 -W^J \bar{\varphi}_{1}\big({W}^{J}\big) \beta_{\tau}\big({W}^{J}\big)\Big)
\big(d\bar{\tilde{v}}_{n}^{J} \wedge  d\tilde{F}_{\tau}^{J}\big)d\tau\\
&+h^{3}\int_{0}^{1}\int_{0}^{1}\Big(\bar{\beta}_{\tau}\big({W}^{J}\big)
\gamma_{\sigma}\big({W}^{J}\big) -\frac{1}{2}{W}^{J}
\bar{\beta}_{\tau}\big({W}^{J}\big)
\beta_{\sigma}\big({W}^{J}\big)\Big)
\big(d\bar{\tilde{F}}_{\tau}^{J} \wedge d\tilde{F}_{\sigma}^{J}\big)d\tau d\sigma\bigg),\\
\end{aligned}\label{21}
\end{equation}}where we have used the fact  $e^{{W}^{J}} -{W}^{J}\varphi_{1}\big({W}^{J}\big)=I$
which is given by the definition of $\varphi$-functions.
{Further, it follows  from the first formula of \eqref{a} that
$d\tilde{x}^{J}_{n} =d\tilde{X}^{J}_{\tau}-\tau h \varphi_1\big(\tau{W}^{J}\big)d\tilde{v}^{J}_{n}-h^2
\int_{0}^{1} \alpha_{\tau \sigma}\big({W}^{J}\big)d\tilde{F}_{\sigma}^{J}d\sigma. $
Then $
d\bar{\tilde{x}}^{J}_{n}\wedge d\tilde{F}^{J}_{\tau}
=d\bar{\tilde{X}}^{J}_{\tau}\wedge d\tilde{F}^{J}_{\tau} -
  \tau h \bar{\varphi}_1\big(\tau{W}^{J}\big)d\bar{\tilde{v}}^{J}_{n}
\wedge d\tilde{F}^{J}_{\tau}-h^2
\int_{0}^{1}\big(\bar{{\alpha}}_{\tau \sigma}\big({W}^{J}\big)
d\bar{\tilde{F}}_{\sigma}^{J}\wedge d\tilde{F}^{J}_{\tau}\big)d\sigma. $
Therefore, the formula  \eqref{21} can be rewritten as
\begin{align}
&\sum\limits_{J=1}^{3}d\bar{\tilde{x}}_{n+1}^{J}\wedge d\tilde{v}_{n+1}^{J}-\frac{1}{2}\sum\limits_{J=1}^{3}d\bar{\tilde{x}}_{n+1}^{J}\wedge d\big(\tilde{\Lambda}^{J}\mathrm{i}\tilde{x}_{n+1}^{J}\big)
=\sum\limits_{J=1}^{3} d\bar{\tilde{x}}_{n}^{J}  \wedge
d\tilde{v}_{n}^{J}
- \frac{1}{2}\sum\limits_{J=1}^{3} d\bar{\tilde{x}}_{n}^{J}\wedge  d\big(\tilde{\Lambda}^{J}\mathrm{i}\tilde{x}_{n}^{J}\big)\nonumber \\
&\qquad \qquad \quad
+ h\sum\limits_{J=1}^{3}\int_{0}^{1} \Big( \gamma_{\tau}\big({W}^{J}\big)
-{W}^{J}\beta_{\tau}\big({W}^{J}\big)            \Big)
\Big(d\bar{\tilde{X}}^{J}_{\tau}\wedge d\tilde{F}^{J}_{\tau}\Big)d\tau \label{c} \\
&\qquad \qquad  \quad
+\sum\limits_{J=1}^{3} \Big(he^{{W}^{J}} \bar{\varphi}_{1}\big({W}^{J}\big)
-\frac{1}{2}h {W}^{J}\bar{\varphi}_{1}\big({W}^{J}\big) \varphi_{1}\big({W}^{J}\big)\Big)
d\bar{\tilde{v}}_{n}^{J}  \wedge   d\tilde{v}_{n}^{J} \label{d} \\
&\qquad \qquad \quad
+h^{2}\sum\limits_{J=1}^{3}\int_{0}^{1} \bigg(\bar{\varphi}_{1}\big({W}^{J}\big)    \gamma_{\tau}\big({W}^{J}\big)
- \beta_{\tau}\big({W}^{J}\big)  e^{-{W}^{J}}-{W}^{J} \bar{\varphi}_{1}\big({W}^{J}\big) \beta_{\tau}\big({W}^{J}\big) \nonumber \\
&\qquad \qquad \quad
-\tau \bar{\varphi}_1\big(\tau{W}^{J}\big)\Big(\gamma_{\tau}\big({W}^{J}\big)
  -{W}^{J}\beta_{\tau}\big({W}^{J}\big)\Big)\bigg)
\Big(d\bar{\tilde{v}}_{n}^{J} \wedge  d\tilde{F}_{\tau}^{J}\Big)d\tau\label{e}\\
&\qquad \qquad \quad
+h^{3}\sum\limits_{J=1}^{3}\int_{0}^{1}\int_{0}^{1}\bigg(\bar{\beta}_{\sigma}\big({W}^{J}\big)   \gamma_{\tau}\big({W}^{J}\big)
-\frac{1}{2}{W}^{J} \bar{\beta}_{\sigma}\big({W}^{J}\big)   \beta_{\tau}\big({W}^{J}\big) \nonumber \\
&\qquad \qquad \quad
-\bar{{\alpha}}_{\tau\sigma}\big({W}^{J}\big)\Big(  \gamma_{\tau}\big({W}^{J}\big) -{W}^{J} \beta_{\tau}\big({W}^{J}\big)   \Big)
\bigg)\Big(d\bar{\tilde{F}}_{\sigma}^{J} \wedge d\tilde{F}_{\tau}^{J}\Big)d\tau d\sigma\label{f}.
\end{align}\label{21a}

\textbf{(II) Proof of the results.}
Next, we   prove that all the formulae
\eqref{c}--\eqref{f} are equal to 0 under the conditions \eqref{17}.}

 $\bullet$ \textbf{Prove that the term $\eqref{c}$ is
zero.}
In the light of  the first condition
of \eqref{17}, ${F}(x)=-\nabla_{x} U(x)$ and \eqref{a}, we find $\sum\limits_{J=1}^{3}d\bar{\tilde{X}}^{J}_{\tau}\wedge d\tilde{F}^{J}_{\tau}=\sum\limits_{J=1}^{3} d
X^{J}_{\tau}\wedge dF^{J}_{\tau}$ which further implies
\begin{equation}
\begin{aligned}
&\sum\limits_{J=1}^{3} \Big( \gamma_{\tau}\big({W}^{J}\big)
-{W}^{J}\beta_{\tau}\big({W}^{J}\big)            \Big)
d\bar{\tilde{X}}^{J}_{\tau}\wedge d\tilde{F}^{J}_{\tau}
=d_{\tau}\sum\limits_{J=1}^{3}   d\bar{\tilde{X}}^{J}_{\tau}\wedge
d\tilde{F}^{J}_{\tau} =d_{\tau}\sum\limits_{J=1}^{3}   d X^{J}_{\tau}\wedge
dF^{J}_{\tau}\\
=&-d_{\tau}\sum\limits_{J=1}^{3}  dF^{J}_{\tau} \wedge d X^{J}_{\tau}
= -d_{\tau} \sum\limits_{J=1}^{3} d\bigg(-\frac{\partial{ U} }
{\partial{x} }(X_\tau) \bigg)^{J}\wedge d X_{\tau}^{J}
=d_{\tau} \sum\limits_{J,I=1}^{3}\frac{\partial^{2}{ U(X_\tau)} }
{\partial{x^{J}} \partial{x^{I}}} d X_{\tau}^{I}  \wedge d
X_{\tau}^{J}=0.
\end{aligned}\nonumber
\end{equation}

$\bullet$  \textbf{Prove that the term $\eqref{d}$ is
zero.} Moreover, it can be checked that
\begin{equation}
\begin{aligned}
&\sum\limits_{J=1}^{3}\Big(he^{{W}^{J}} \bar{\varphi}_{1}\big({W}^{J}\big)
-\frac{1}{2}h {W}^{J}\bar{\varphi}_{1}\big({W}^{J}\big) \varphi_{1}\big({W}^{J}\big)\Big)
d\bar{\tilde{v}}_{n}^{J}  \wedge   d\tilde{v}_{n}^{J}\\
=&\Big(he^{{W}^{1}}
\bar{\varphi}_{1}\big({W}^{1}\big) -\frac{1}{2}h
{W}^{1}\bar{\varphi}_{1}\big({W}^{1}\big)
\varphi_{1}\big({W}^{1}\big)\Big)
d\bar{\tilde{v}}_{n}^{1}  \wedge   d\tilde{v}_{n}^{1}\\
&+\Big(he^{{W}^{2}} \bar{\varphi}_{1}\big({W}^{2}\big)
-\frac{1}{2}h {W}^{2}\bar{\varphi}_{1}\big({W}^{2}\big) \varphi_{1}\big({W}^{2}\big)\Big)
d\bar{\tilde{v}}_{n}^{2}  \wedge   d\tilde{v}_{n}^{2}\\
&+\Big(he^{{W}^{3}} \bar{\varphi}_{1}\big({W}^{3}\big)
-\frac{1}{2}h {W}^{3}\bar{\varphi}_{1}\big({W}^{3}\big) \varphi_{1}\big({W}^{3}\big)\Big)
d\bar{\tilde{v}}_{n}^{3}  \wedge   d\tilde{v}_{n}^{3}.\\
\end{aligned}\nonumber
\end{equation}
According to the  property  of $\tilde{v}_{n}$, it yields
\begin{equation}
 d\bar{\tilde{v}}_{n}^{1}  \wedge   d\tilde{v}_{n}^{1}=-d\bar{\tilde{v}}_{n}^{3}  \wedge   d\tilde{v}_{n}^{3}, \  \ \ d\bar{\tilde{v}}_{n}^{2}  \wedge   d\tilde{v}_{n}^{2}=0, \nonumber
\end{equation}
and
\begin{equation}
 he^{{W}^{1}} \bar{\varphi}_{1}\big({W}^{1}\big)
-\frac{1}{2}h
{W}^{1}\bar{\varphi}_{1}\big({W}^{1}\big)
\varphi_{1}\big({W}^{1}\big)
=he^{{W}^{3}}
\bar{\varphi}_{1}\big({W}^{3}\big) -\frac{1}{2}h
{W}^{3}\bar{\varphi}_{1}\big({W}^{3}\big)
\varphi_{1}\big({W}^{3}\big).\nonumber
 \end{equation}
Thus, we obtain
  \begin{equation}
  \sum\limits_{J=1}^{3}\Big(he^{{W}^{J}} \bar{\varphi}_{1}\big({W}^{J}\big)
-\frac{1}{2}h
{W}^{J}\bar{\varphi}_{1}\big({W}^{J}\big)
\varphi_{1}\big({W}^{J}\big)\Big) d\bar{\tilde{v}}_{n}^{J}
\wedge   d\tilde{v}_{n}^{J}=0.\nonumber
\end{equation}

$\bullet$  \textbf{Prove that the terms
\eqref{e}--\eqref{f} are zero.} From the second and third formulae
of \eqref{17}, the last two terms \eqref{e} and \eqref{f} vanish.
\vspace{0.2cm} In the light of the above analysis, it is arrived at
\begin{equation}
\begin{aligned}
\sum\limits_{J=1}^{3}  { d}\bar{\tilde{x}}_{n+1}^{J} \wedge  {
d}\tilde{v}_{n+1}^{J}-\frac{1}{2} \sum\limits_{J=1}^{3}  {
d}\bar{\tilde{x}}_{n+1}^{J} \wedge  { d}
\big(\tilde{\Lambda}^{J}\mathrm{i}\tilde{x}_{n+1}^{J}\big)=
\sum\limits_{J=1}^{3}  { d}\bar{\tilde{x}}_{n}^{J} \wedge {
d}\tilde{v}_{n}^{J}-\frac{1}{2}  \sum\limits_{J=1}^{3}  {
d}\bar{\tilde{x}}_{n}^{J} \wedge { d}
\big(\tilde{\Lambda}^{J}\mathrm{i}\tilde{x}_{n}^{J}\big).
\end{aligned}\nonumber
\end{equation}
 Therefore the  method with the coefficients satisfying
\eqref{17} is symplectic. The proof is complete.
\end{proof}

\section{{Practical {symplectic methods} and numerical tests}}\label{sec:prac meth}
We are now ready to consider the construction of practical {symplectic methods}. In this section, we {shall} propose  second-order and fourth-order continuous-stage symplectic adapted exponential methods  based on the symplectic conditions \eqref{17}  and {then rigorously study their error bounds and implementations. Finally, one numerical test is given to show the performance of the obtained methods.}
\subsection{Construction of the practical methods}\label{sec3.1}
In order to construct  practical continuous-stage symplectic adapted exponential methods, we first present  a class of
 method \eqref{TRM} whose  coefficients satisfy the symplectic conditions \eqref{17}. Then
 symplectic methods {of} the form \eqref{CSAEI} are obtained since the transformed method \eqref{TRM} shares the same symplecticity with   \eqref{CSAEI}.
\begin{mytheo}\label{thm: csxmthod}
If the coefficients of  \eqref{TRM} satisfy
 \begin{align}
 &\beta_{\tau}( W )=(1-\tau)\varphi_{1}\big((1-\tau)W\big),\qquad \gamma_{\tau}( W )=\varphi_{0}\big((1-\tau)W\big),\label{coebc}\\
 & \bar{\alpha}_{\tau \sigma}(W)-\alpha_{\sigma\tau}(W)
=(\tau-\sigma)\varphi_{1}\big(-(\tau-\sigma) W\big),\label{coea}
\end{align}
  then the   method \eqref{TRM} is symplectic, {i.e.,} its coefficients satisfy the symplectic conditions \eqref{17}.
\end{mytheo}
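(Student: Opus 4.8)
The plan is to verify that the choices \eqref{coebc}--\eqref{coea} satisfy each of the three symplectic conditions $(i)$--$(iii)$ in \eqref{17}, exploiting two elementary facts. First, since $W=h\tilde{\Lambda}\mathrm{i}$ is diagonal with purely imaginary entries $W^{J}$, every coefficient is a scalar function of $W^{J}$, so all factors appearing in \eqref{17} commute; moreover $\overline{W^{J}}=-W^{J}$ and the $\varphi$-functions have real Taylor coefficients, whence $\bar{\varphi}_{k}(W)=\varphi_{k}(-W)$ componentwise. Second, the defining recurrence yields the identities $z\varphi_{1}(z)=e^{z}-1$ and $z\varphi_{1}(-z)=1-e^{-z}$, which I would use repeatedly to convert every product of a $\varphi_{1}$ with its argument into a difference of exponentials.

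I would begin with condition $(i)$. Substituting $\gamma_{\tau}(W)=\varphi_{0}\big((1-\tau)W\big)=e^{(1-\tau)W}$ and $W\beta_{\tau}(W)=(1-\tau)W\varphi_{1}\big((1-\tau)W\big)=e^{(1-\tau)W}-1$ gives $\gamma_{\tau}(W)-W\beta_{\tau}(W)=I$, so $(i)$ holds with $d_{\tau}=1$. Conjugating and using $\bar{\varphi}_{k}(W)=\varphi_{k}(-W)$ produces the companion identity $\bar{\gamma}_{\sigma}(W)+W\bar{\beta}_{\sigma}(W)=I$, which I will need later. For condition $(ii)$, I would reduce the bracket on the right-hand side via $W\bar{\varphi}_{1}(W)=1-e^{-W}$ and $\tau W\bar{\varphi}_{1}(\tau W)=1-e^{-\tau W}$, so that it collapses to $e^{-\tau W}$, while the left-hand factor $\bar{\varphi}_{1}(W)-\tau\bar{\varphi}_{1}(\tau W)$ becomes $(e^{-\tau W}-e^{-W})/W$. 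Both sides then reduce to $(e^{(1-2\tau)W}-e^{-\tau W})/W$, confirming $(ii)$.

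The main work, and the only genuine obstacle, is condition $(iii)$, which is the one that actually pins down the constraint on $\alpha$. The key simplification is to insert the two relations $\gamma_{\tau}(W)-W\beta_{\tau}(W)=I$ and $\bar{\gamma}_{\sigma}(W)+W\bar{\beta}_{\sigma}(W)=I$ obtained above: the factors multiplying $\bar{\alpha}_{\tau\sigma}(W)$ and $\alpha_{\sigma\tau}(W)$ each collapse to $I$, so all the $\alpha$-dependence of $(iii)$ reduces to the single antisymmetric combination $\bar{\alpha}_{\tau\sigma}(W)-\alpha_{\sigma\tau}(W)$. After using commutativity of the scalar factors to merge the two half-weight terms $\tfrac{1}{2}W\bar{\beta}_{\sigma}\beta_{\tau}$, condition $(iii)$ becomes the explicit requirement
\[
\bar{\alpha}_{\tau\sigma}(W)-\alpha_{\sigma\tau}(W)=\bar{\beta}_{\sigma}(W)\gamma_{\tau}(W)-\beta_{\tau}(W)\bar{\gamma}_{\sigma}(W)-W\bar{\beta}_{\sigma}(W)\beta_{\tau}(W).
\]
I would then evaluate the right-hand side by writing each factor as an exponential through the two recurrence identities; the resulting exponential terms $e^{(1-\tau)W}$, $e^{-(1-\sigma)W}$ and $e^{(\sigma-\tau)W}$ telescope, leaving $(1-e^{-(\tau-\sigma)W})/W=(\tau-\sigma)\varphi_{1}\big(-(\tau-\sigma)W\big)$, which is precisely \eqref{coea}. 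The careful bookkeeping of these exponential cancellations is where the calculation demands attention, but no structural difficulty is involved. Having verified $(i)$, $(ii)$ and $(iii)$, the method \eqref{TRM} is symplectic, completing the proof.
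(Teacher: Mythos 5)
Your proposal is correct and follows essentially the same route as the paper: direct verification of conditions $(i)$--$(iii)$ by converting all $\varphi_1$-products into exponentials via $z\varphi_1(z)=e^z-1$, with the factors multiplying $\bar{\alpha}_{\tau\sigma}$ and $\alpha_{\sigma\tau}$ in $(iii)$ collapsing to $I$ through condition $(i)$ and its conjugate (the paper does this implicitly, you make it explicit), after which the exponential cancellations yield exactly $\bar{\alpha}_{\tau\sigma}(W)-\alpha_{\sigma\tau}(W)=(\tau-\sigma)\varphi_1\big(-(\tau-\sigma)W\big)$. Your explicit justification of $\bar{\varphi}_k(W)=\varphi_k(-W)$ from the purely imaginary entries of $W$ is a useful clarification of a step the paper uses without comment, but it does not change the argument.
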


\begin{proof}
 By inserting    \eqref{coebc} into the first  formula of \eqref{17}, we have
\begin{equation*}
 \begin{aligned}
  &\gamma_{\tau}( W )-W\beta_{\tau}( W )
 =\varphi_{0}\big((1-\tau)W\big)-W(1-\tau)\varphi_{1}\big((1-\tau)W\big)\\
=&e^{(1-\tau)W}-W(1-\tau)\big(e^{(1-\tau)W}-I\big)/\big((1-\tau)W\big)=I.
\end{aligned}
\end{equation*}
It follows from the second formula of \eqref{17} that
\begin{equation*}
 \begin{aligned}
  &\gamma_{\tau}( W )\big(\bar{\varphi}_{1}(W)-\tau\bar{\varphi}_{1}(\tau W)\big)-
\beta_{\tau}( W )\big(e^{-W}+W\bar{\varphi}_{1}(W)-\tau W\bar{\varphi}_{1}(\tau W)\big)\\
 =&\varphi_{0}\big((1-\tau)W)\big)\big(\varphi_{1}(-W)-\tau\varphi_{1}(-\tau W)\big)-
(1-\tau)\varphi_{1}\big((1-\tau)W\big)\\
&\big(e^{-W}+W\varphi_{1}(-W)-\tau W\varphi_{1}(-\tau W)\big)\\
=& e^{(1-\tau)W}\big(e^{-\tau W}-e^{-W}\big)/W-e^{-\tau W}\big(e^{(1-\tau)W}-I\big)/W=0.\\
\end{aligned}
\end{equation*}
Furthermore, we substitute  the coefficients \eqref{coebc} into the third formula of symplectic conditions \eqref{17} and compute the left hand side to get
\begin{equation*}
 \begin{aligned}
 &(1-\sigma)\varphi_{1}\big(-(1-\sigma) W\big)\varphi_{0}\big((1-\tau) W\big)
 -\bar{\alpha}_{\tau \sigma}(W)\\
 &-W(1-\sigma)\varphi_{1}\big(-(1-\sigma) W\big)(1-\tau)\varphi_{1}\big((1-\tau) W\big)/2\\
=&\big(-e^{(\sigma-\tau)W}+e^{(1-\tau)W}-e^{-(1-\sigma)W}+I\big)/(2W)-\bar{\alpha}_{\tau \sigma}(W).
\end{aligned}
\end{equation*}
Similarly, the right hand side of the third  symplectic condition {in \eqref{17}} can be simplified as
\begin{equation*}
 \begin{aligned}
 &(1-\tau)\varphi_{1}\big((1-\tau) W\big)\varphi_{0}\big(-(1-\sigma) W\big)-\alpha_{\sigma\tau}(W)\\
 &+W(1-\tau)\varphi_{1}\big((1-\tau) W\big)(1-\sigma)\varphi_{1}\big(-(1-\sigma) W\big)/2\\
=&\big(e^{(\sigma-\tau)W}-e^{-(1-\sigma)W}+e^{(1-\tau)W}-I\big)/(2W)-\alpha_{\sigma\tau}(W).
\end{aligned}
\end{equation*}
Combining the above two results, we get
\begin{equation*}
 \begin{aligned}
 \bar{\alpha}_{\tau \sigma}(W)-\alpha_{\sigma\tau}(W)
=\big(-e^{(\sigma-\tau)W}+I\big)/W
=(\tau-\sigma)\varphi_{1}\big(-(\tau-\sigma) W\big).
\end{aligned}
\end{equation*}
From the above analysis, {it is known} that the coefficients  \eqref{coebc} and  \eqref{coea} satisfy all the symplectic conditions \eqref{17}, which completes the proof of this theorem.  
\end{proof}

As stated {in the beginning of this section}, once  a
  symplectic method determined by \eqref{TRM} with the coefficients \eqref{coebc} and  \eqref{coea} is given, a
 symplectic method  \eqref{CSAEI} is derived immediately {whose coefficients are obtained by} replacing $W$ with $hM$ {in \eqref{coebc} and  \eqref{coea}}.
In what follows, we present two practical continuous-stage symplectic adapted exponential  methods  \eqref{CSAEI} up to order four.
Actually,  the result of $\beta_{\tau}( hM )$ and $\gamma_{\tau}( hM )$ is described by \eqref{coebc} of Theorem \ref{thm: csxmthod}. Therefore, we  only need to make the choice of the coefficient
$\alpha_{\tau\sigma}(hM)$. The first case is given by the following  algorithm which {will be proved to be of order two.}

\begin{algo}\label{algo3.2}
\textbf{(Second-order  method)} Define a practical continuous-stage symplectic  adapted exponential algorithm \eqref{CSAEI} with the following coefficients
\begin{equation}\label{coe2}
 \begin{aligned}
 &\alpha_{\tau\sigma}(hM)=\frac{\tau-\sigma}{2}\varphi_{1}((\tau-\sigma) hM),\ \
    \beta_{\tau}( hM )=(1-\tau)\varphi_{1}((1-\tau)hM),\ \ \gamma_{\tau}( hM )=\varphi_{0}((1-\tau)hM).
\end{aligned}
\end{equation}
It is easily to  check that the coefficient $\alpha_{\tau\sigma} $  satisfies \eqref{coea}, which means that
the  method with the coefficients \eqref{coe2} is  {symplectic.}
\end{algo}

In order to improve the  accuracy, we consider the following choice of $\alpha_{\tau\sigma}(hM)$ which leads to a
 fourth-order scheme.
\begin{algo}\label{algo3.3}
\textbf{(Fourth-order  method)} If we take the following coefficients
\begin{equation}
 \begin{aligned}\label{coe4}
 &\alpha_{\tau\sigma}(hM)=\Big(\frac{1}{6}+\frac{\tau-\sigma}{2}\Big)\varphi_{1}((\tau-\sigma) hM),\\
&  \beta_{\tau}( hM )=(1-\tau)\varphi_{1}((1-\tau)hM),\quad\gamma_{\tau}( hM )=\varphi_{0}((1-\tau)hM),
\end{aligned}
\end{equation}
then we get a  continuous-stage symplectic  adapted exponential  method.  Similarly, we can prove that the coefficients \eqref{coe4}  satisfy all the symplectic conditions of  {Theorem \ref{thm: csxmthod}.}
\end{algo}

%

{
 \subsection{Convergence}
 In this section, we shall study the convergence of the proposed two {algorithms}.
\begin{mytheo}\label{order condition} \textbf{(Convergence)}
It is assumed that $ F $ is locally Lipschitz-continuous with the Lipschitz constant $L$.
There {exist  constants $h_0>0$ and $\widehat{C}>0$} independent of $\eps$, such that  if the stepsize $h$ satisfies $ h \leq h_0$   and $ h \leq \widehat{C} \epsilon$,  the global errors are estimated as
\begin{equation*}
\begin{aligned}&Algorithm\ \textmd{\ref{algo3.2}}:\ \ \norm{x(t_{n})-x_n}\leq C  h^2,\ \ \ \ \ \ \
\norm{v(t_{n})-v_n}\leq C h^2/\epsilon,\\
&Algorithm\ \textmd{\ref{algo3.3}}:\ \ \norm{x(t_{n})-x_n}\leq C  h^4/\eps^2,\ \ \
\norm{v(t_{n})-v_n}\leq C h^4/\epsilon^3,
\end{aligned}
\end{equation*}
 for $nh\leq T$, where
$C>0$ is a generic constant independent of $\epsilon$ or $h$  or $n$  but depends on $\widehat{C}, L, T$ and  $\norm{ \frac{d^s}{dx^s}F(x) }$ with $s=1,2$ for Algorithm \ref{algo3.2}  and $s=1,2,3,4$ for Algorithm \ref{algo3.3}.
\end{mytheo}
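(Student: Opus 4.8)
The plan is to derive the global errors from one-step (local) error bounds via a standard discrete stability (Lady Windermere's fan) argument, while carefully tracking powers of $1/\epsilon$ at every stage. The decisive structural observation is that the outer updates for $x_{n+1}$ and $v_{n+1}$ in \eqref{CSAEI} employ \emph{exactly} the kernels $(1-\tau)\varphi_1((1-\tau)hM)$ and $\varphi_0((1-\tau)hM)$ appearing in the variation-of-constants formula \eqref{VOC}; this is guaranteed by the choices of $\beta_\tau$ and $\gamma_\tau$ in \eqref{coe2} and \eqref{coe4}. Consequently, inserting the exact data $x(t_n),v(t_n)$ and comparing one step, the whole local position error takes the form $h^2\int_0^1(1-\tau)\varphi_1((1-\tau)hM)\big(F(x(t_n{+}h\tau))-F(X_\tau)\big)\,d\tau$, with the analogous $\varphi_0$-weighted integral (and a prefactor $h$) for the velocity. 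By the Lipschitz continuity of $F$, both local errors are controlled by the internal stage error $X_\tau-x(t_n+h\tau)$, so the analysis reduces to estimating this quantity.

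To bound the stage error I would represent the exact solution on the subinterval $[t_n,t_n+h\tau]$ through \eqref{VOC}; after the substitution scaling out $\tau$ this yields the continuous-stage form $x(t_n+h\tau)=x(t_n)+h\tau\varphi_1(h\tau M)v(t_n)+h^2\int_0^\tau(\tau-\sigma)\varphi_1((\tau-\sigma)hM)F(x(t_n+h\sigma))\,d\sigma$, whose linear part matches the numerical stage exactly. Subtracting then exposes the discrepancy between the method's kernel $\alpha_{\tau\sigma}(hM)$ and the exact kernel $(\tau-\sigma)\varphi_1((\tau-\sigma)hM)$ restricted to $\sigma\le\tau$; the order is read off from how many weighted moments of this discrepancy vanish, and the extra $\tfrac16$ in \eqref{coe4} is precisely the modification that raises the consistency order from Algorithm \ref{algo3.2} to Algorithm \ref{algo3.3}. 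Implicitness of the stages is handled by a Banach fixed-point argument valid for $h\le h_0$, which also furnishes $\epsilon$-independent a priori bounds on $X_\tau$. Here the second restriction $h\le\widehat{C}\epsilon$ is used crucially: it keeps $\|hM\|=h\|B\|/\epsilon$ bounded, so every $\varphi$-function evaluation — in particular the rotation $\varphi_0(z)=e^z$ on the imaginary axis — has operator norm bounded independently of $\epsilon$, which is what makes the stability constants of the fan argument $\epsilon$-uniform.

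The powers of $1/\epsilon$ then enter only through the Taylor remainders of the quadrature defect, which involve time-derivatives of the composite map $t\mapsto F(x(t))$ along the highly oscillatory exact trajectory: since $\dot v=Mv+F\sim 1/\epsilon$, the $k$-th such derivative carries a factor $1/\epsilon^{k-1}$, and feeding these into the local-error integrals produces the velocity bounds $h^2/\epsilon$, $h^4/\epsilon^3$ and the position bound $h^4/\epsilon^2$ of Algorithm \ref{algo3.3}. I would assemble the global estimates by closing the coupled error recursion $\delta x_{n+1}\approx\delta x_n+h\varphi_1(hM)\delta v_n+O(h^2)$ and $\delta v_{n+1}\approx\varphi_0(hM)\delta v_n+O(h\cdot\text{stage error})$ through a discrete Gronwall inequality with $\epsilon$-uniform constants.

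The main obstacle, and the genuinely delicate point, is the \emph{uniform} position bound $\|x(t_n)-x_n\|\le Ch^2$ for Algorithm \ref{algo3.2}. A crude estimate would propagate the velocity error, which is only $O(h^2/\epsilon)$, into the position through the coupling $h\varphi_1(hM)\delta v_n$ and hence give the degraded bound $h^2/\epsilon$. To recover uniformity one must exploit that the $O(1/\epsilon)$ part of the velocity error rotates rapidly from step to step (with the discrete frequency of $\varphi_0(hM)$), so that its weighted accumulation $\sum_n h\varphi_1(hM)\delta v_n$ entering the position behaves like a discrete oscillatory sum and cancels to gain one factor of $\epsilon$. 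Making this precise — for instance by splitting the velocity error into a rapidly oscillating component and a slowly-varying remainder and applying summation by parts to the former — is where the sharpness of the position estimate, and the qualitative difference between the two algorithms, must be earned.
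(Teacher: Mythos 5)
Your overall skeleton (consistency of the internal stage plus a coupled discrete Gronwall closure) is the same as the paper's, and your opening observation --- that the outer kernels in \eqref{coe2} and \eqref{coe4} coincide exactly with the kernels of the variation-of-constants formula \eqref{VOC}, so the outer defects of \eqref{CSAEI} vanish identically and the only local error source is the stage --- is correct, and is in fact sharper than what the paper records (the paper Taylor-expands and bounds the outer defects as nonzero remainders $\norm{\delta^x_{n+1}}\leq Ch^3$, $\norm{\delta^v_{n+1}}\leq Ch^3/\eps$ before invoking Gronwall). The genuine gap is your last paragraph: the uniform position bound $\norm{x(t_n)-x_n}\leq Ch^2$ for Algorithm \ref{algo3.2} is never actually proved. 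You only announce that it requires splitting the velocity error into a rapidly rotating part and a smooth part and gaining a factor $\eps$ by summation by parts, and you leave that argument unexecuted. By your own account this is ``where the sharpness must be earned,'' so the proposal is incomplete exactly at the step that separates the theorem from the trivial $O(h^2/\eps)$ estimate.

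Moreover, the cancellation machinery you propose there is unnecessary, and pursuing it is a wrong turn, because it contradicts the consequence of your own first paragraph. With the exact kernels, the one-step velocity defect is $h\int_0^1\varphi_0((1-\tau)hM)\big(F(x(t_n+\tau h))-F(X_\tau)\big)d\tau$, hence bounded by $C\,h\,L\,\norm{E_\tau}_c$; and for Algorithm \ref{algo3.2} the stage defect obeys $\norm{\Delta^x_\tau}\leq Ch^2$ \emph{uniformly in} $\eps$, since its estimate uses only boundedness of $F$ and of the $\varphi$-functions at skew-symmetric arguments (for normal $S$ with imaginary spectrum, $\norm{\varphi_k(S)}\leq 1/k!$), with no Taylor expansion of $t\mapsto F(x(t))$ and therefore no factor $1/\eps$. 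Writing $a_n=\norm{x(t_n)-x_n}$, $b_n=\norm{v(t_n)-v_n}$ and using that all propagation matrices $\varphi_0(hM)$, $\varphi_1(hM)$, $\beta_\tau$, $\gamma_\tau$ have norm at most $1$, the recursion becomes $a_{n+1}\leq a_n+hb_n+Ch^2(a_n+hb_n+Ch^2)$ and $b_{n+1}\leq b_n+Ch(a_n+hb_n+Ch^2)$, and Gronwall applied to $a_n+b_n$ gives $a_n+b_n\leq C(T)h^2$ uniformly. In other words, for Algorithm \ref{algo3.2} the velocity error is \emph{not} genuinely $O(h^2/\eps)$: that factor (in the theorem's statement and in the paper's proof) is an artifact of bounding the identically vanishing outer defects by Taylor remainders, where $\frac{d^k}{dt^k}F(x(t))=O(\eps^{1-k})$ enters; the $1/\eps$ powers are real only for Algorithm \ref{algo3.3}, where they arise from the higher moment conditions on $\alpha_{\tau\sigma}$, exactly as you describe. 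So the repair is to replace your final paragraph by the elementary Gronwall closure above --- which, incidentally, is also what makes the paper's own terse last step rigorous, since feeding its non-sharp bound $\norm{\delta^v_{n+1}}\leq Ch^3/\eps$ into Gronwall would likewise only yield $O(h^2/\eps)$ for the position. A minor additional point: your stated reason for requiring $h\leq\widehat{C}\eps$ (keeping $\norm{hM}$ bounded so the $\varphi$-functions stay bounded) is not the real one, since those norms are automatically bounded for skew-symmetric arguments; the restriction is needed to control the $1/\eps$ powers produced by the time derivatives of $F(x(t))$ in the stage consistency analysis.
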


\begin{proof}
$\bullet$ \textbf{Local errors.}
Local errors of the {method \eqref{CSAEI} are defined}
by inserting the exact solution
\eqref{VOC} into \eqref{CSAEI}, which leads to
\begin{equation*}
\begin{aligned}\label{PCSEEP1}
&x(t_{n}+\tau h)=x(t_{n})+\tau h\varphi_{1}(\tau hM )v(t_{n})+h^{2}\int_{0}^{1}
\alpha_{\tau \sigma}( hM )
\hat{F}(t_{n}+\sigma h)d \sigma +\triangle^x_{\tau},\\
&x(t_{n+1})=x(t_{n})+h\varphi_{1}( hM )v(t_{n})+h^{2}\int_{0}^{1}
\beta_{\tau}( hM )
\hat{F}(t_{n}+\tau h)d \tau+\delta^x_{n+1},\\
&v(t_{n+1})=\varphi_{0}( hM )v(t_{n})+h\int_{0}^{1}\gamma_{\tau}( hM )
\hat{F}(t_{n}+\tau h)d\tau+\delta^v_{n+1},
\end{aligned}
\end{equation*}
where $ \hat{F}(t):= F(x(t))$ and $ \triangle^x_{\tau}$, $ \delta^x_{n+1}$, $\delta^v_{n+1} $ are the discrepancies.

 Combining with the variation-of-constants formula  \eqref{VOC} and using Taylor series, we obtain
\begin{equation*}
\begin{aligned}
\triangle^x_{\tau}
&= {\tau}^2 h^2\int_{0}^{1}(1-z)\varphi_{1}(\tau(1-z) hM )\hat{F}(t_{n}+h \tau z)dz
-h^{2}\int_{0}^{1}\alpha_{\tau\sigma}( hM )\hat{F}(t_{n}+\sigma h)d\sigma\\
&=\sum\limits_{j=0}^{r-3}h^{j+2}\bigg({\tau}^{j+2}\int_{0}^{1}(1-\sigma)\varphi_{1}
(\tau(1-\sigma) hM )\frac{{\sigma}^{j}}{j!}d\sigma
-\int_{0}^{1}\alpha_{\tau\sigma}( hM )\frac{{\sigma}^{j}}{j!}d \sigma\bigg)\hat{F}^{j}(t_{n})+\mathcal{O}(h^{r}/\epsilon^{r-3})\\
&=\sum\limits_{j=0}^{r-3}h^{j+2}\bigg({\tau}^{j+2}\varphi_{j+2}(\tau hM )
-\int_{0}^{1}\alpha_{\tau\sigma}( hM )\frac{{\sigma}^{j}}{j!}d \sigma\bigg)\hat{F}^{j}(t_{n})+\mathcal{O}(h^{r}/\epsilon^{r-3}),
\end{aligned}
\end{equation*}
and similarly
\begin{equation*}
\begin{aligned}
\rho_{n+1}=&\sum\limits_{j=0}^{r-2}h^{j+2}\bigg(\varphi_{j+2}( hM )
-\int_{0}^{1}\beta_{\tau}( hM )\frac{{\tau}^{j}}{j!}d \tau\bigg)\hat{F}^{j}(t_{n})
+{\mathcal{O}(h^{r+1}/\epsilon^{r-2})},\\
{\rho}^{'}_{n+1}=&\sum\limits_{j=0}^{r-1}h^{j+1}\bigg(\varphi_{j+1}( hM )
-\int_{0}^{1}\gamma_{\tau}( hM )\frac{{\tau}^{j}}{j!}d \tau\bigg)\hat{F}^{j}(t_{n})
+{\mathcal{O}(h^{r+1}/\epsilon^{r-1})}.
\end{aligned}
\end{equation*}
where $ \hat{F}^{j}(t) $ denotes the $j$th order derivative of $ F(x(t)) $ with respect to $ t $ and $r$ is a positive integer which has different value for different algorithm.

Based on the coefficient functions chosen in Algorithms \ref{algo3.2}-\ref{algo3.3}, we have the following results for the  discrepancies stated above
\begin{equation}\label{local error}
\begin{aligned}&Algorithm\ \textmd{\ref{algo3.2}}:\ \ r=2,\ \ \norm{\triangle^x_{\tau}}\leq Ch^{2},
\ \ \ \  \
\norm{\delta^x_{n+1}}\leq Ch^{3}, \ \  \ \ \ \ \norm{\delta^v_{n+1}}\leq Ch^{3}/\eps,\\
&Algorithm\ \textmd{\ref{algo3.3}}:\ \ r=4,\ \ \norm{\triangle^x_{\tau}}\leq Ch^{4}/\eps,
\ \
\norm{\delta^x_{n+1}}\leq Ch^{5}/\eps^2, \ \   \norm{\delta^v_{n+1}}\leq Ch^{5}/\eps^3.
\end{aligned}
\end{equation}

$\bullet$ \textbf{Global errors.}  Denote the global errors of \eqref{CSAEI} by
\begin{equation*}
e_{n}^{x}=x(t_{n})-x_{n},\ \ \ e_{n}^{v}=v(t_{n})-v_{n},
\ \ \ E_{\tau}=x(t_{n}+\tau h)-X_{\tau},
\end{equation*}
and the error system is
\begin{equation}
\begin{aligned}\label{error}
&E_{\tau}=e_{n}^{x}+\tau h \varphi_1(\tau hM )e_{n}^{v}
+h^2\int_{0}^{1}\alpha_{\tau\sigma}( hM )\big(F(x(t_{n}+\sigma h))-F(X_{\sigma})\big)d\sigma+\Delta^x_{\tau},\\
&e_{n+1}^{x}=e_{n}^{x}+h \varphi_{1}( hM ) e_{n}^{v}
+h^{2}\int_{0}^{1}\beta_{\tau}( hM )\big(F(x(t_{n}+\tau h))-F(X_{\tau})\big)d\tau+\delta^x_{n+1},\\
&e_{n+1}^{v}=\varphi_{0}( hM ) e_{n}^{v}
+h\int_{0}^{1}\gamma_{\tau}( hM )\big(F(x(t_{n}+\tau h))-F(X_{\tau})\big)d\tau+\delta^v_{n+1},
\end{aligned}
\end{equation}
where the initial conditions are $ e_{0}^{x}=0 $, $ e_{0}^{v}=0 $.
With the uniform bound of the coefficients,   the first equation of \eqref{error} is bounded by
\begin{equation*}
\norm{E_{\tau}} \leq  \norm{e_{n }^{x}}+ \tau h \norm{e_{n }^{v}}+h^2 C L\norm{E_{\tau}}_{c}+{\norm{\Delta^x_{\tau}}_{c}},
\end{equation*}
which yields
\begin{equation*}
\norm{E_{\tau}}_{c} \leq  \norm{e_{n }^{x}}+h \norm{e_{n }^{v}}+h^2 {C} L \norm{E_{\tau}}_{c}+\norm{\Delta^x_{\tau}}_{c},
\end{equation*}
where  $\|\cdot\|_{c}$ denotes the maximum norm
$
\norm{E_{\tau}}_{c}=\max \limits_{\tau\in[0,1]} \norm{E_{\tau}}
$
for a continuous  function $E_{\tau}$ on [0,1].
Under the condition that
$ h \leq \sqrt{\frac{1}{2 C L}}$ is satisfied, we have
\begin{equation*}\label{con6}
\norm{E_{\tau}}_{c} \leq  2\big(\norm{e_{n }^{x}}+h \norm{e_{n }^{v}}\big)+2\norm{\Delta_{\tau}^{x}}_{c}.
\end{equation*}
From this result and the last two  equations of \eqref{error}, it is deduced that
  \begin{equation*}
\begin{aligned}
\norm{e_{n+1}^{x}}
&\leq \norm{e_{n}^{x}}+h\norm{e_{n}^{v}}+Ch^2\big(\norm{e_{n}^{x}}+h\norm{e_{n}^{v}}+\norm{\Delta_{\tau}^{x}}_{c}\big)
+\norm{\delta^x_{n+1}},\\
\norm{e_{n+1}^{v}}
&\leq \norm{e_{n}^{v}}+Ch\big(\norm{e_{n}^{x}}+h\norm{e_{n}^{v}}+\norm{\Delta_{\tau}^{x}}_{c}\big)
+\norm{\delta^v_{n+1}}.
 \end{aligned}
  \end{equation*}
These {results, the local errors derived in  \eqref{local error}} and Gronwall inequality immediately lead to the statement of this theorem, which completes the proof.
\end{proof}}

\subsection{{Implementation issues}}\label{sec:four}

It is noted that   Algorithms \ref{algo3.2}--\ref{algo3.3} fail to be practical unless the integrals appearing in \eqref{CSAEI} are computed exactly or by using some numerical quadrature formulae. For most cases, those integrals cannot be solved exactly and usually quadrature formulae are needed in practical computations. In this section, we pay attention to this point and discuss the implementations of the obtained algorithms.



Let us start with the implementation of Algorithm \ref{algo3.2}.
In the computation, we consider the four-point Gaussian quadrature with  the weights $b_{i}$ and abscissae $c_{i}$
and then derive the  scheme as follows
\begin{equation*}
\begin{aligned}
&X_{c_{i}}=x_{n}+ c_{i} h\varphi_1(c_{i}hM) v_{n}
+h^2\sum_{j=1}^{4}\frac{b_{j}(c_{i}-c_{j})}{2}\varphi_1((c_{i}-c_{j})hM)F (X_{c_{j}}),\ \ i=1,2,\ldots,4,\\
&x_{n+1}=x_{n}+ h\varphi_1(hM) v_{n}+h^2\sum_{i=1}^{4}b_{i}(1-c_{i})\varphi_1((1-c_{i})hM)F (X_{c_{i}}),\\
&v_{n+1}=\varphi_0(hM)v_{n}+h\sum_{i=1}^{4}b_{i}\varphi_0((1-c_{i})hM)F (X_{c_{i}}),
\end{aligned}
\end{equation*}
where
\begin{equation*}
\begin{array}[c]{ll}
&\breve{a}=\frac{2\sqrt{30}}{35}, \qquad \ \ \ \ \   \breve{b}=\frac{\sqrt{30}}{36},\qquad \ \ \ \ \  \ b_{1}=b_{4}=\frac{1/2-\breve{b}}{2},\ \ \ \ \  \  b_{2}=b_{3}=\frac{1/2+\breve{b}}{2},\\
&c_{1}=\frac{1+\sqrt{3/7+\breve{a}}}{2},\ \ \ c_{2}=\frac{1+\sqrt{3/7-\breve{a}}}{2}, \ \ \
 c_{3}=\frac{1-\sqrt{3/7-\breve{a}}}{2}, \ \ \ \ \ \ \  c_{4}=\frac{1-\sqrt{3/7+\breve{a}}}{2}.\\
\end{array}
\end{equation*}
We shall refer to this algorithm by SC1O2.


Obviously, this scheme is implicit and to get an explicit one, we consider the one-point Gaussian quadrature with
 the weight $b_{1}=1$  and abscissa  $c_{1}=\frac{1}{2}$. This yields
\begin{equation*}
\begin{aligned}\label{vv}
&X_{c_{1}}=x_{n}+ \frac{h}{2}\varphi_1(hM/2) v_{n},\ \ x_{n+1}=x_{n}+ h\varphi_1(hM) v_{n}+\frac{h^2}{2}
\varphi_1(hM/2)F (X_{c_{1}}),\\ &v_{n+1}=\varphi_0(hM)v_{n}+h\varphi_{0}(hM/2)F (X_{c_{1}}).
\end{aligned}
\end{equation*}
From this formulation, it is clear that this is
an explicit scheme and we
denoted it by SC2O2.

For the implementation of Algorithm \ref{algo3.3}, the same  quadrature as used in SC1O2 is chosen here and the corresponding algorithm is referred as  SC1O4. {To get the explicit scheme, we  consider}  the following scheme
\begin{equation*}
\begin{aligned}
&X_{c_{i}}=x_{n}+ c_ih\varphi_1(c_{i} hM) v_{n}
+h^2 \sum\limits_{j=1}^{i-1}a_{ij}(c_i-c_j)h\varphi_1((c_i-c_j)hM)F (X_j),\ \ i=1,2,3,\\
&x_{n+1}=x_{n}+ h\varphi_1(hM) v_{n}
+h^2\sum\limits_{i=1}^{3}b_i(1-c_i)\varphi_1((1-c_i)hM)F (X_i),\\
&v_{n+1}=\varphi_0(hM)v_{n}
+h\sum\limits_{i=1}^{3}b_i\varphi_0((1-c_i)hM)F (X_i),
\end{aligned}
\end{equation*}
where
\begin{equation*}
\begin{array}[c]{ll}
&a_{21}=\frac{4+2\sqrt[3]{2}+\sqrt[3]{4}}{6}, \qquad   a_{31}=b_{1}=b_3=a_{21},\qquad   a_{32}=\frac{-1-2\sqrt[3]{2}-\sqrt[3]{4}}{3},\\
&b_{2}=\frac{-1-2\sqrt[3]{2}-\sqrt[3]{4}}{3},\ \ \ \ \ \ c_{1}=b_{1}/2, \ \ \ c_{2}=\frac{1}{2}, \qquad c_{3}=1-c_1.\\
\end{array}
\end{equation*}
We shall call this  method as SC2O4.

\subsection{Numerical test}\label{sec: experiment}
One numerical test is shown in this section to test the efficiency of the new methods
compared with some existing methods in the literature.
 The   methods  for comparison  are chosen as follows:
\begin{itemize}
 \item BORIS: the Boris method of order two presented in \cite{Boris};
       \item RKO2: a  symplectic Runge-Kutta method of order two (implicit
midpoint rule) presented in \cite{Hairer2002};
  \item SC1O2: the  continuous-stage symplectic adapted exponential  method of order two derived in  Section \ref{sec:four};
  \item SC2O2: the explicit continuous-stage symplectic adapted exponential  method  of order two derived in  Section \ref{sec:four};
   \item RKO4: a  symplectic Runge-Kutta method of order four presented in
\cite{Sanz-Serna91};
  \item SC1O4: the  continuous-stage symplectic adapted exponential  method of order four derived in  Section \ref{sec:four};
   \item SC2O4: the explicit continuous-stage symplectic adapted exponential  method of order four derived in  Section \ref{sec:four}.
   \end{itemize}
For implicit methods, we choose fixed-point iteration and set $ 10^{-16} $ as the
error tolerance  and 5  as the maximum number of  each iteration.
To test the performance of all the methods, we compute the global errors: $error:=\frac{\norm{x_{n}-x(t_n)}}{\norm{x(t_n)}}+\frac{\norm{v_{n}-v(t_n)}}{\norm{v(t_n)}}$ and the energy  error
$e_{H}:=\frac{|H(x_{n},v_n)-H(x_0,v_0)|}{|H(x_0,v_0)|}$  in {the} numerical experiment.

\noindent\vskip3mm \noindent\textbf{Problem 1. (Homogeneous magnetic field)}
This problem is devoted to the charged-particle system  \eqref{charged-particle sts-cons} of \cite{Lubich2017} with an additional factor $1/\epsilon$ and a homogeneous magnetic field.
We take the scalar potential $U(x)=\frac{1}{100\sqrt{x_{1}^2+x_{2}^2}}$, the homogeneous magnetic field
$\frac{1}{\epsilon}B=\frac{1}{\epsilon}(0,0,1)^{\intercal}$ and the initial conditions as $x(0)=(0,0.2,0.1)^{\intercal}, v(0)=(0.09, 0.05, 0.2)^{\intercal}.$

\textbf{Order behaviour.} This system is integrated on $[0,1]$ with different   $\epsilon$  and step sizes $h$ to
show  the global {errors $error$} in Fig. \ref{fig:problem11}.  According to the result, it can be seen   that  the global error lines of  SC1O2 and SC2O2 are  nearly parallel to the line  with slope 2, which verifies  that these two methods  have second-order  accuracy.
Similarly, the fourth-order  accuracy can be observed for SC1O4 and SC2O4. Moreover, we can see that the global errors of our methods are smaller than the other three methods especially with  small $\epsilon$. This  demonstrates that our methods  have  better accuracy than others when solving CPD in a strong magnetic field.

\textbf{Uniform errors.} To display the influence of $\eps$ on the global errors,
we show  the error $error2:=\frac{\norm{x_{n}-x(t_n)}}{\norm{x(t_n)}}+\frac{\eps \norm{v_{n}-v(t_n)}}{\norm{v(t_n)}}$ for the second order methods in Fig. \ref{fig:problem11new1}  and $error4:=\frac{\eps^2\norm{x_{n}-x(t_n)}}{\norm{x(t_n)}}+\frac{\eps^3\norm{v_{n}-v(t_n)}}{\norm{v(t_n)}}$ for the fourth order ones in Fig. \ref{fig:problem11new2}. It can be observed from the results that our methods demonstrate the convergence stated in Theorem \ref{order condition} and behave much better than the others.

\textbf{Energy conservation.} Then we solve the problem
with    $h=\frac{1}{100}$ on the interval $ [0,1000]$.  Fig. \ref{fig:problem12} displays the results of energy conservation {$err_H$}.
It can be observed that for the normal  magnetic field, all the methods have good energy conservation over long times and they behave similarly. For the case that $\eps$ is small, the energy error of those  two Runge-Kutta methods (RKO2 and RKO4) increases slightly when $t$ goes large while the other methods have a good performance. Moreover, our methods display better accuracy in the energy preservation than the Boris method.

\textbf{Long-time behavior.}  Finally,  the problem is solved on $[0, 1000]$  with
$h=\frac{1}{2}$ and $\eps=0.1$. Fig. \ref{fig:problem11new3} presents the trajectory  for the second order methods (BORIS, RKO2, SC1O2 and SC2O2)  in [x y z] space and the results show that  the methods SC1O2 and SC2O2 perform uniformly better  than the Boris  and  RKO2 methods.
 Meanwhile,  we have noticed that the RKO2 method performs worse than expected. The reason
is that   for  the RKO2 method when solving CPD  with  small $\epsilon$, there is a
very strict requirement of the stepsize and the method usually does
not behave well.

\begin{figure}[t!]
\centering\tabcolsep=0.4mm
\begin{tabular}
[c]{ccc}%
\includegraphics[width=4.7cm,height=4.45cm]{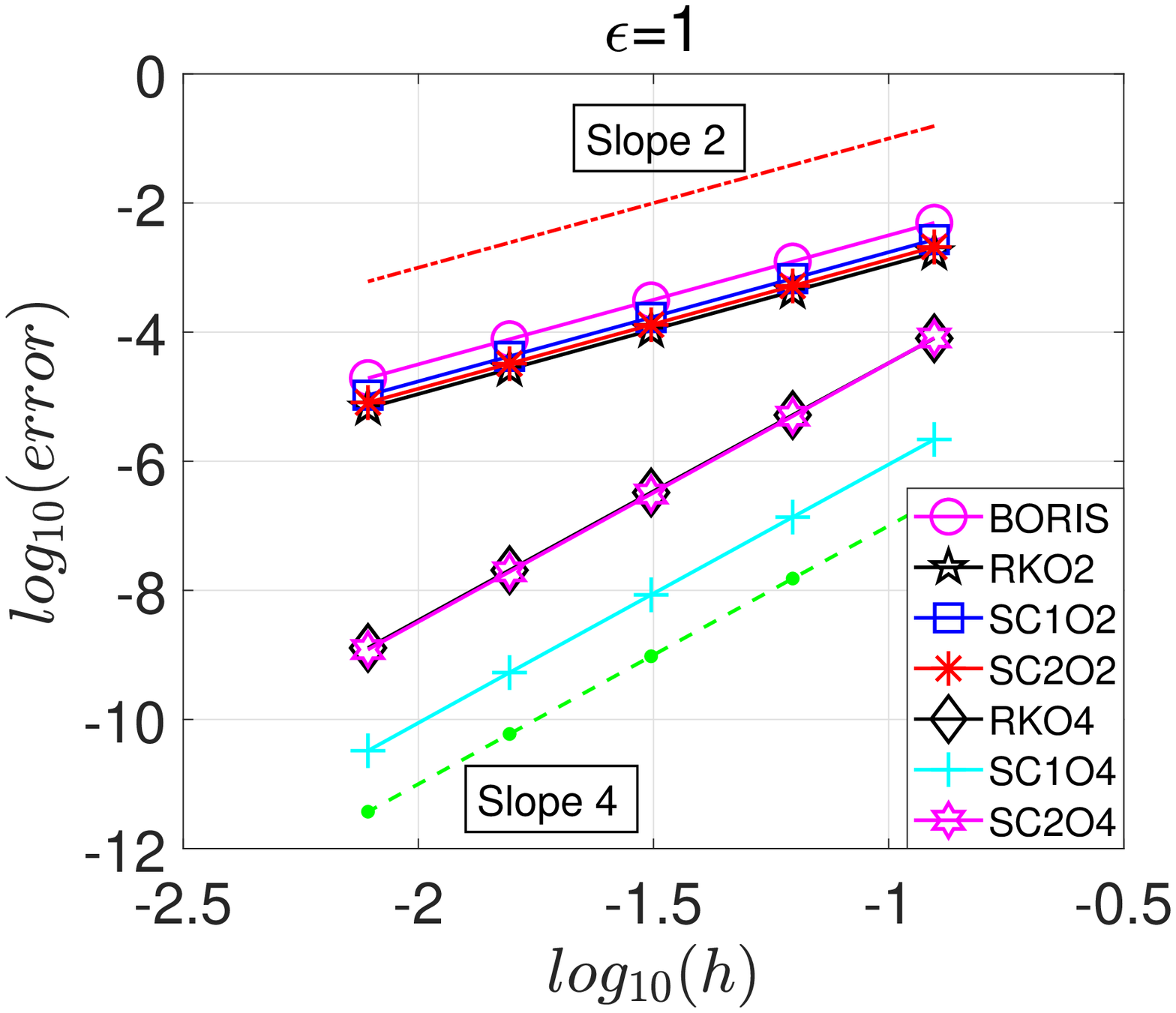} & \includegraphics[width=4.7cm,height=4.45cm]{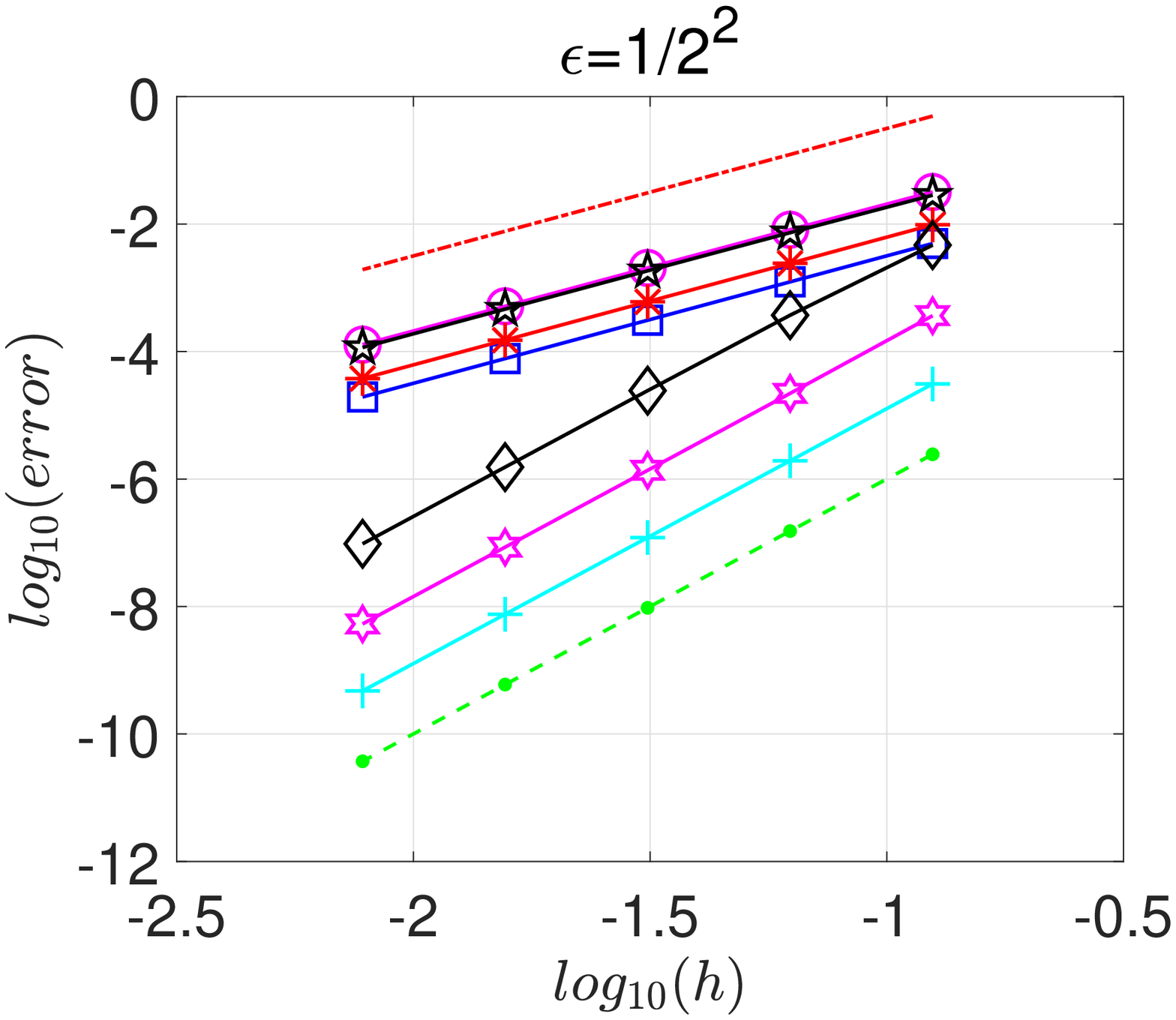}&\includegraphics[width=4.7cm,height=4.45cm]{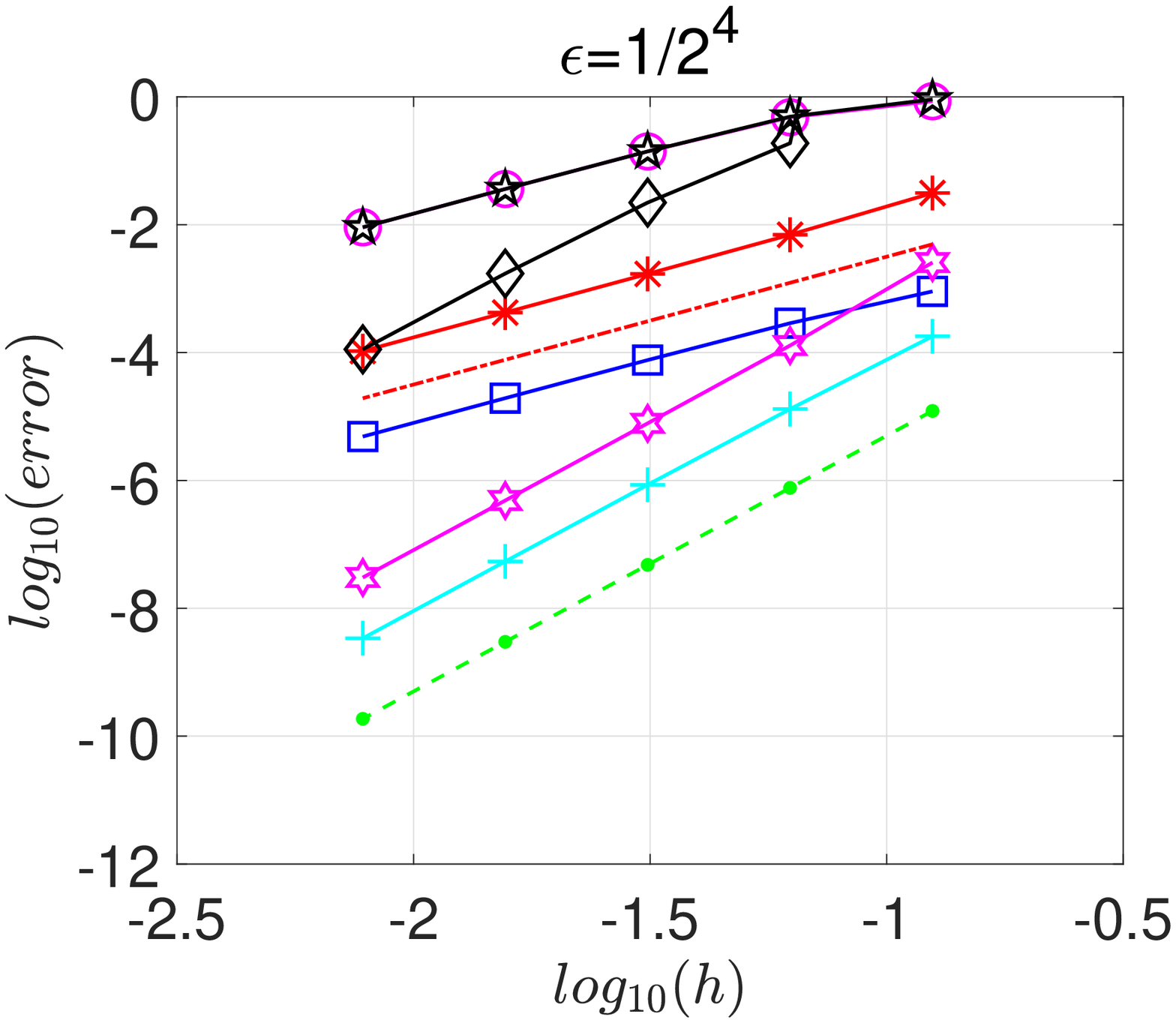}
\end{tabular}
\caption{Problem 1. The global errors $error:=\frac{\norm{x_{n}-x(t_n)}}{\norm{x(t_n)}}+\frac{\norm{v_{n}-v(t_n)}}{\norm{v(t_n)}}$ with $t=1$ and $h=1/2^{k}$ for $k=3,4,\ldots,7$ under different $\epsilon$. }
\label{fig:problem11}
\end{figure}

\begin{figure}[t!]
\centering\tabcolsep=0.4mm
\begin{tabular}
[c]{cccc}%
\includegraphics[width=3.8cm,height=4.45cm]{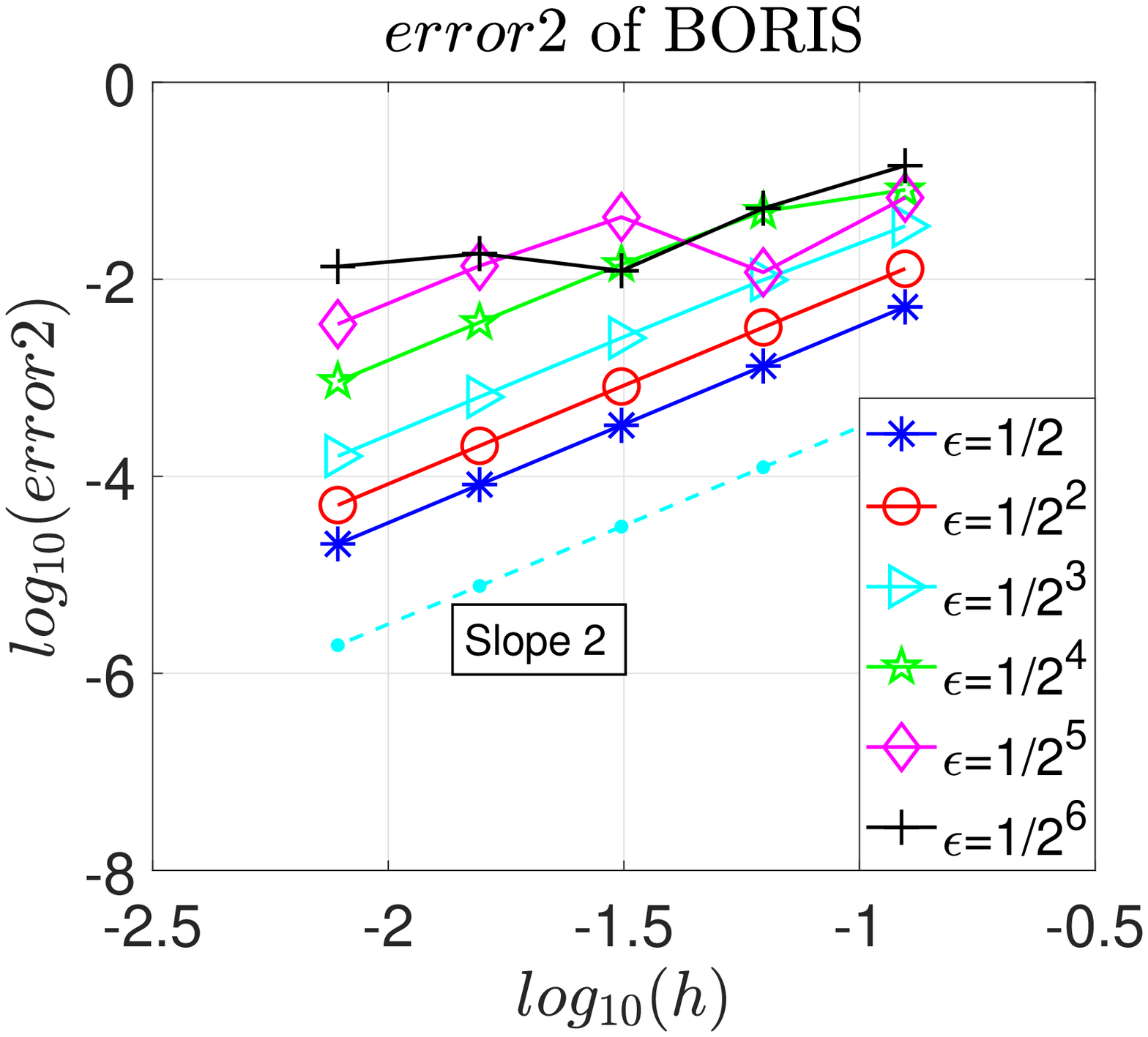} &\includegraphics[width=3.8cm,height=4.45cm]{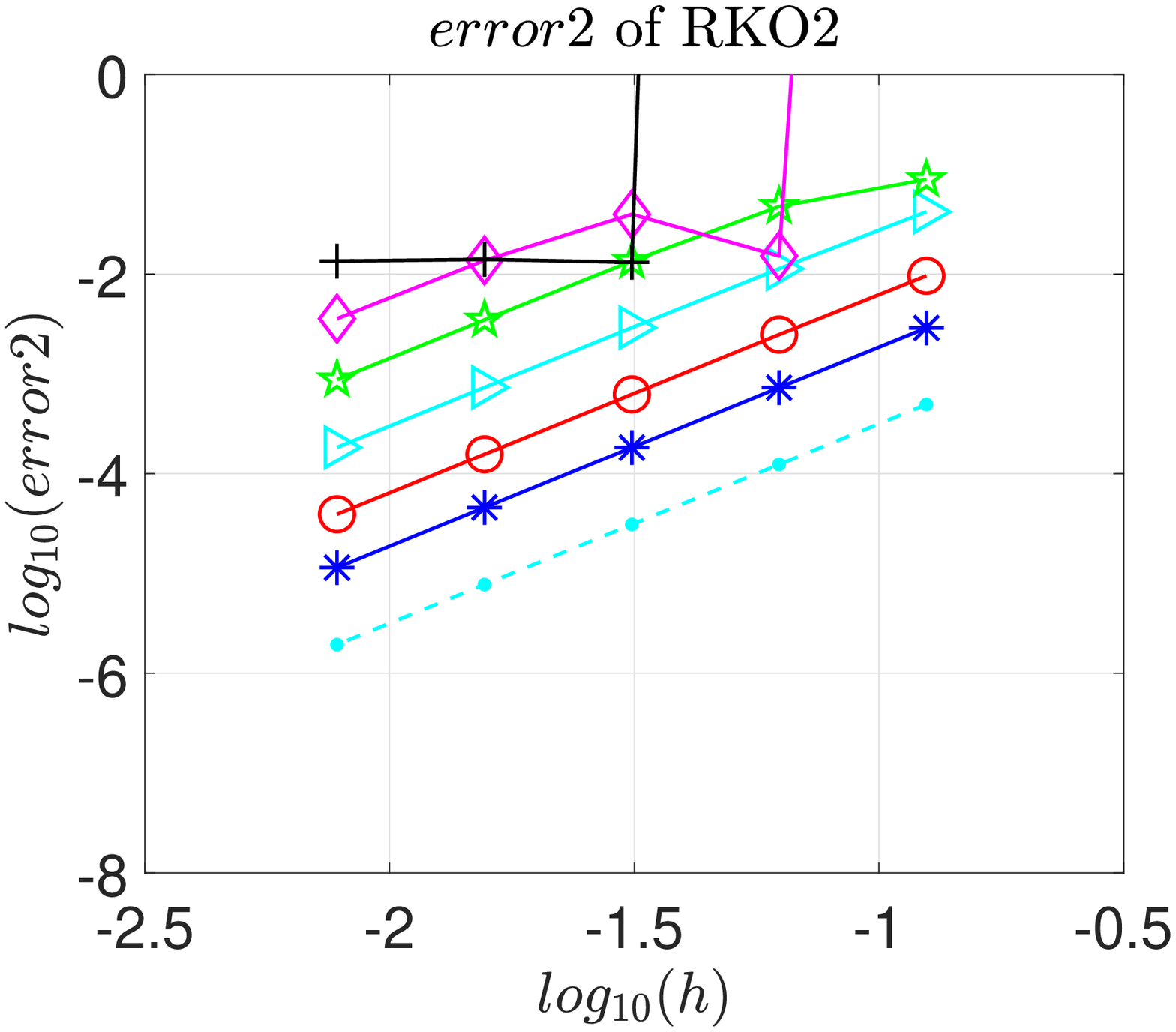} & \includegraphics[width=3.8cm,height=4.45cm]{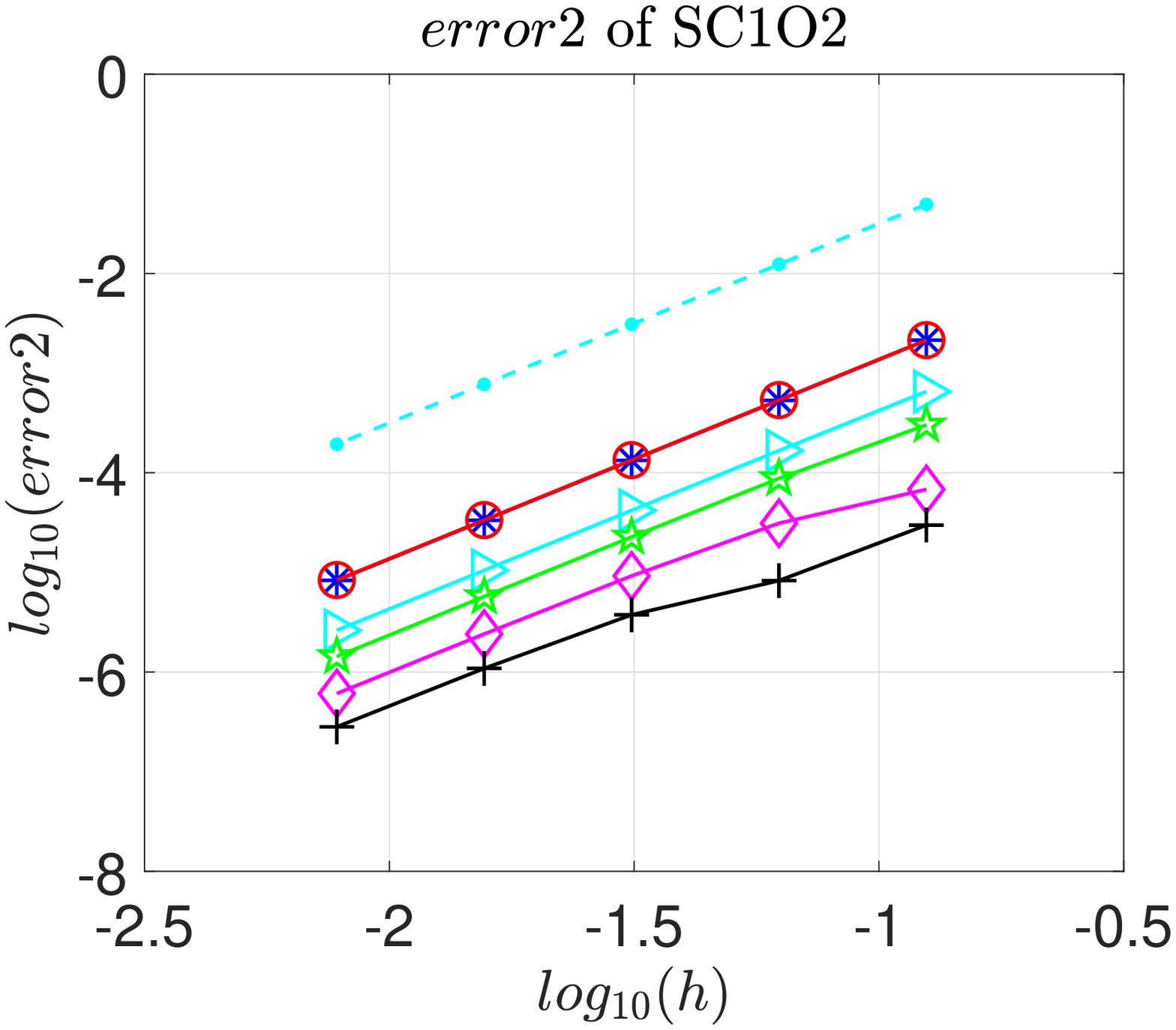} & \includegraphics[width=3.8cm,height=4.45cm]{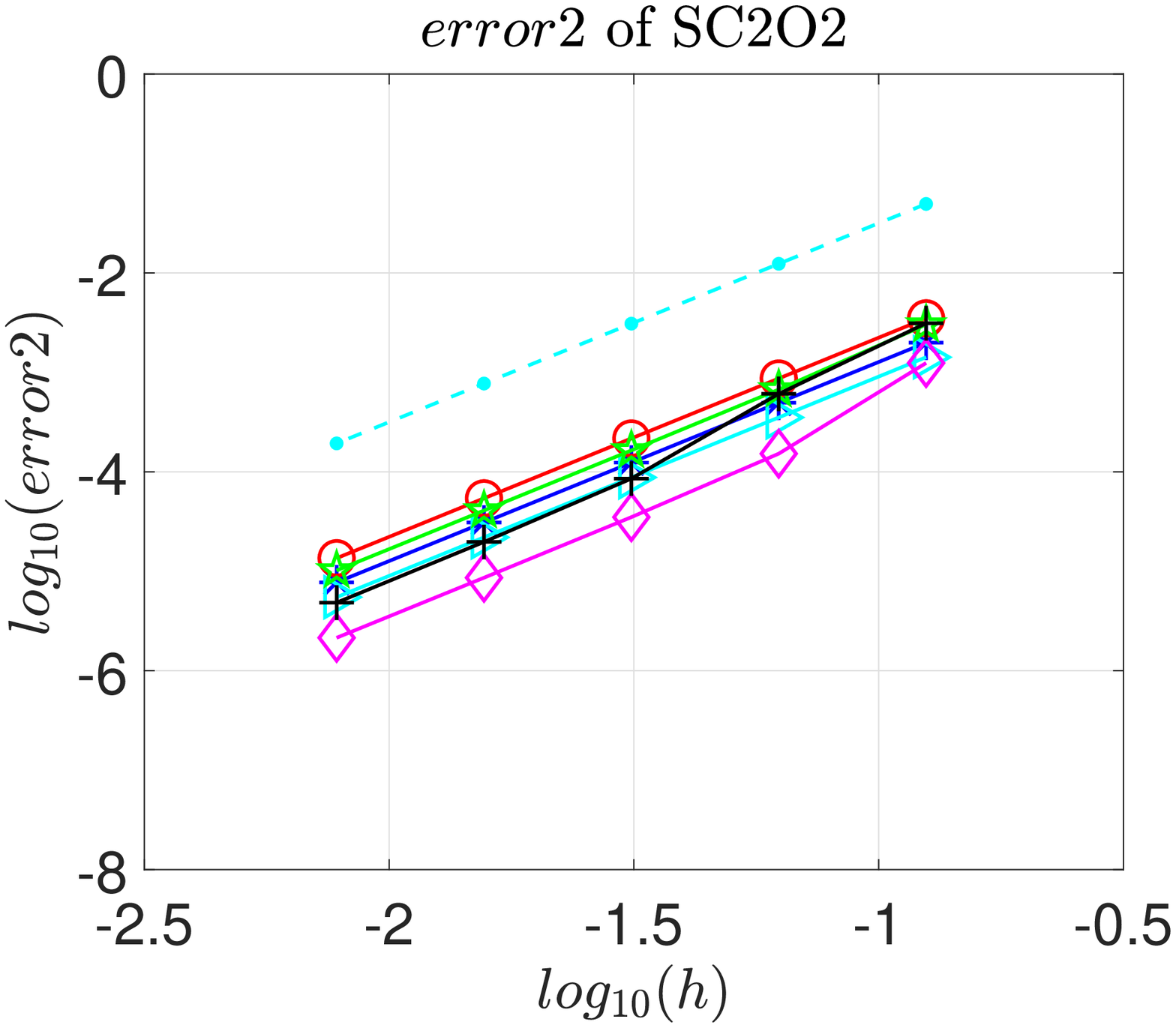}
\end{tabular}
\caption{Problem 1.  The errors {$error2:=\frac{\norm{x_{n}-x(t_n)}}{\norm{x(t_n)}}+\frac{\eps \norm{v_{n}-v(t_n)}}{\norm{v(t_n)}}$} of {second order methods (BORIS, RKO2, SC1O2 and SC2O2)} with $t=1$ and $h=1/2^{k}$ for $k=3,4,\ldots,7$ under different $\epsilon$. }
\label{fig:problem11new1}
\end{figure}

\begin{figure}[t!]
\centering\tabcolsep=0.4mm
\begin{tabular}
[c]{cccc}%
 \includegraphics[width=4.7cm,height=4.45cm]{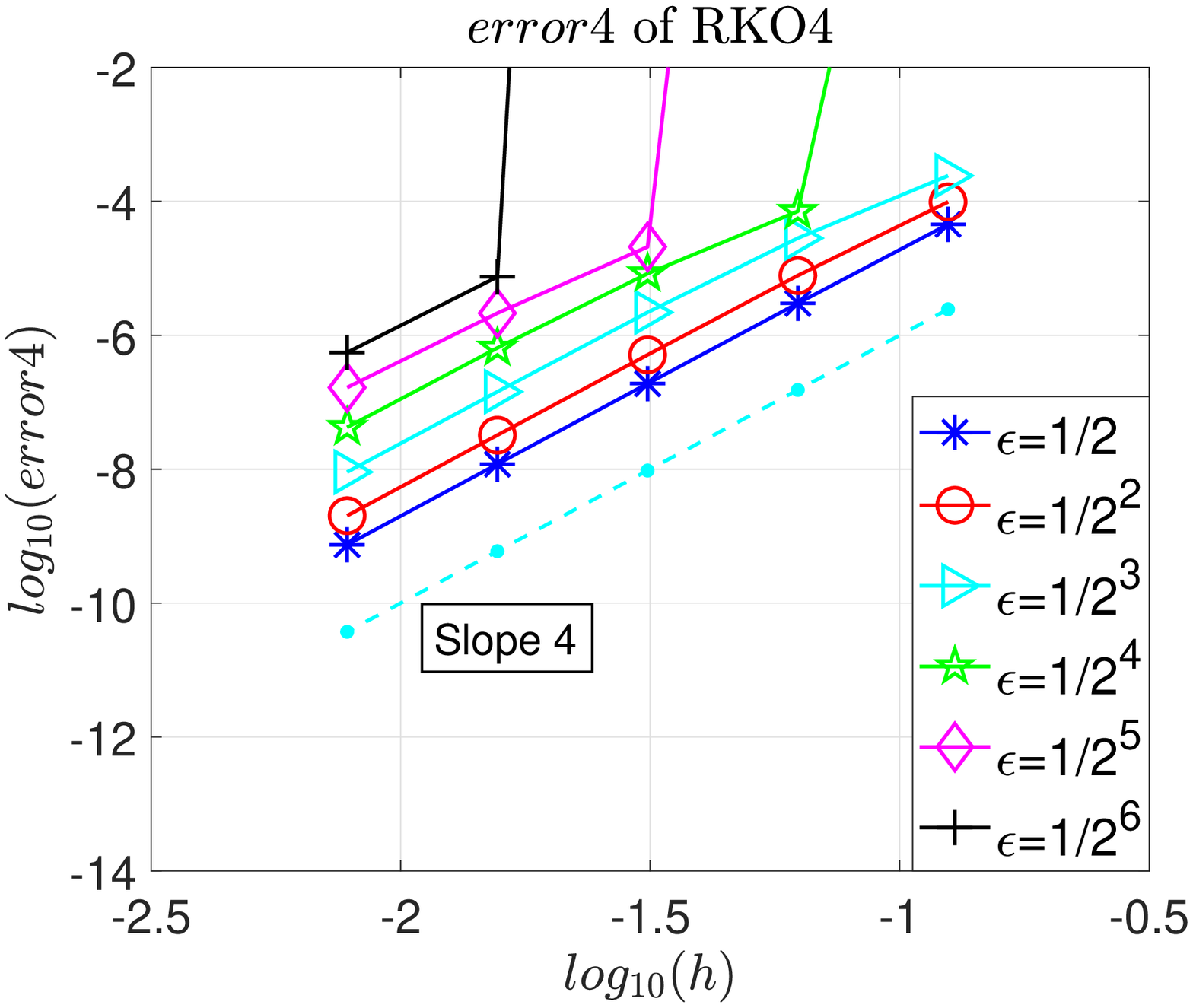} &\includegraphics[width=4.7cm,height=4.45cm]{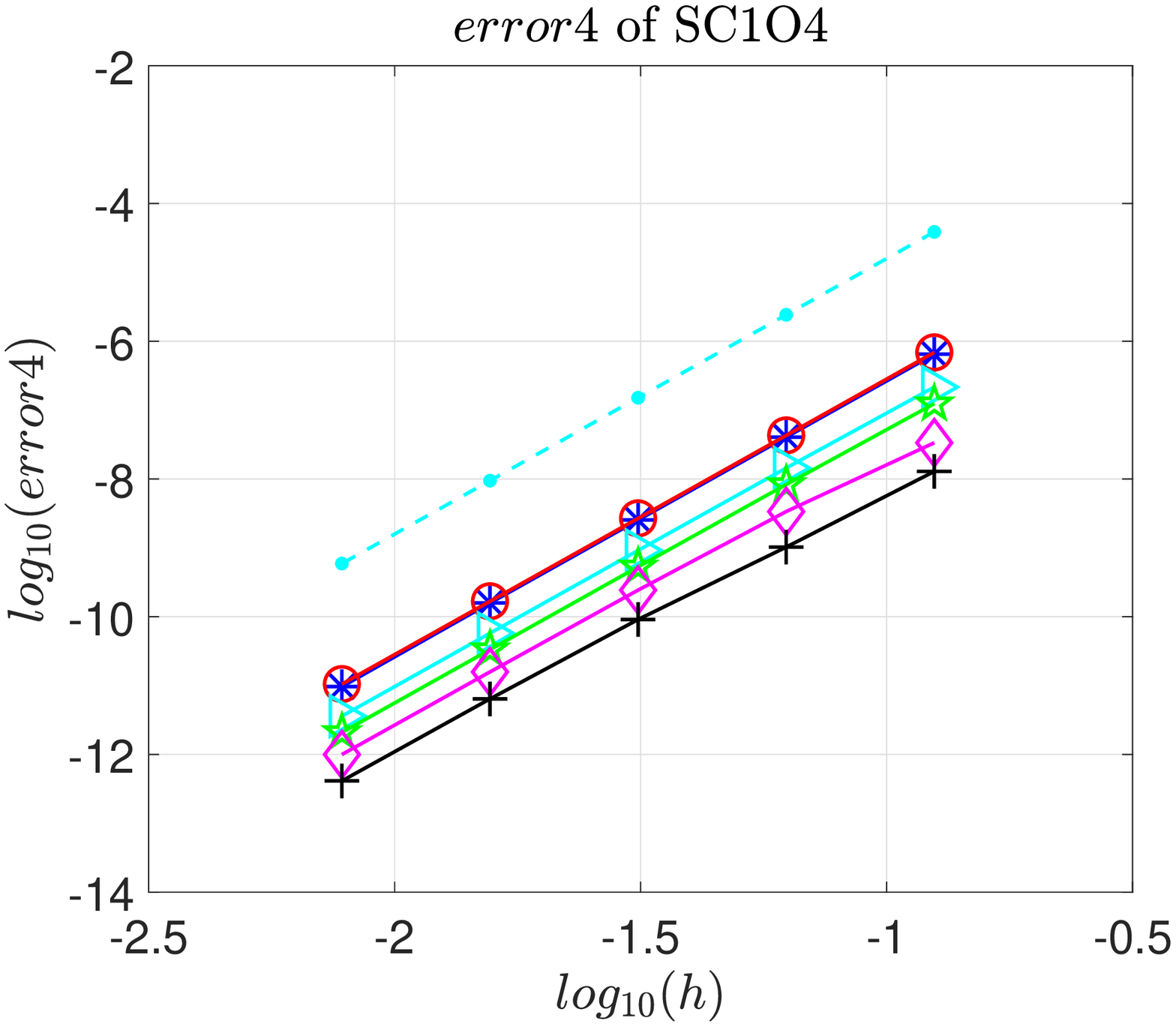} & \includegraphics[width=4.7cm,height=4.45cm]{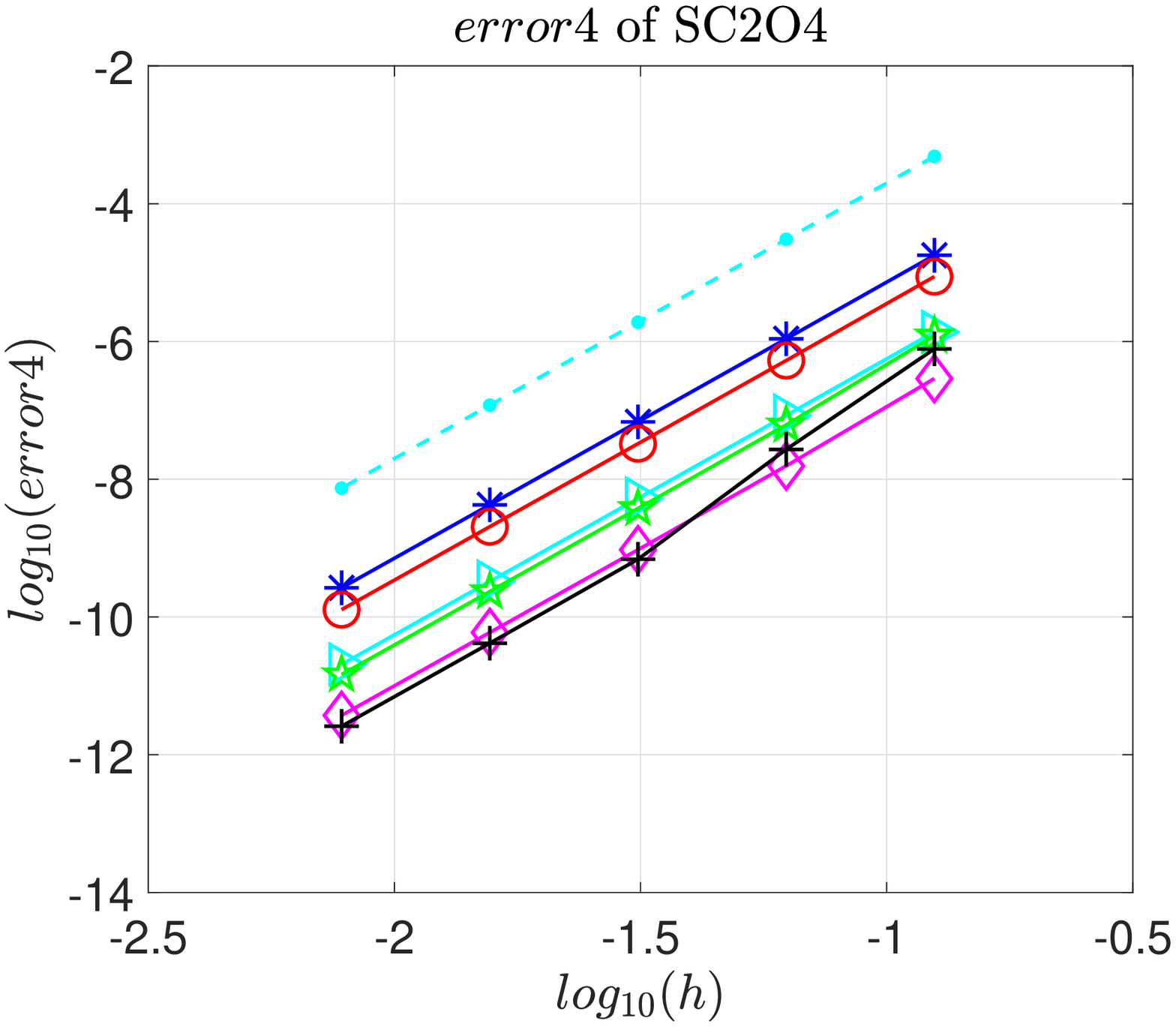}
\end{tabular}
\caption{Problem 1.  The errors {$error4:=\frac{\eps^2\norm{x_{n}-x(t_n)}}{\norm{x(t_n)}}+\frac{\eps^3\norm{v_{n}-v(t_n)}}{\norm{v(t_n)}}$} of {fourth order methods (RKO4, SC1O4 and SC2O4)} with $t=1$ and $h=1/2^{k}$ for $k=3,4,\ldots,7$ under different $\epsilon$. }
\label{fig:problem11new2}
\end{figure}

\begin{figure}[t!]
\centering\tabcolsep=0.4mm
\begin{tabular}
[c]{ccc}%
\includegraphics[width=4.7cm,height=4.45cm]{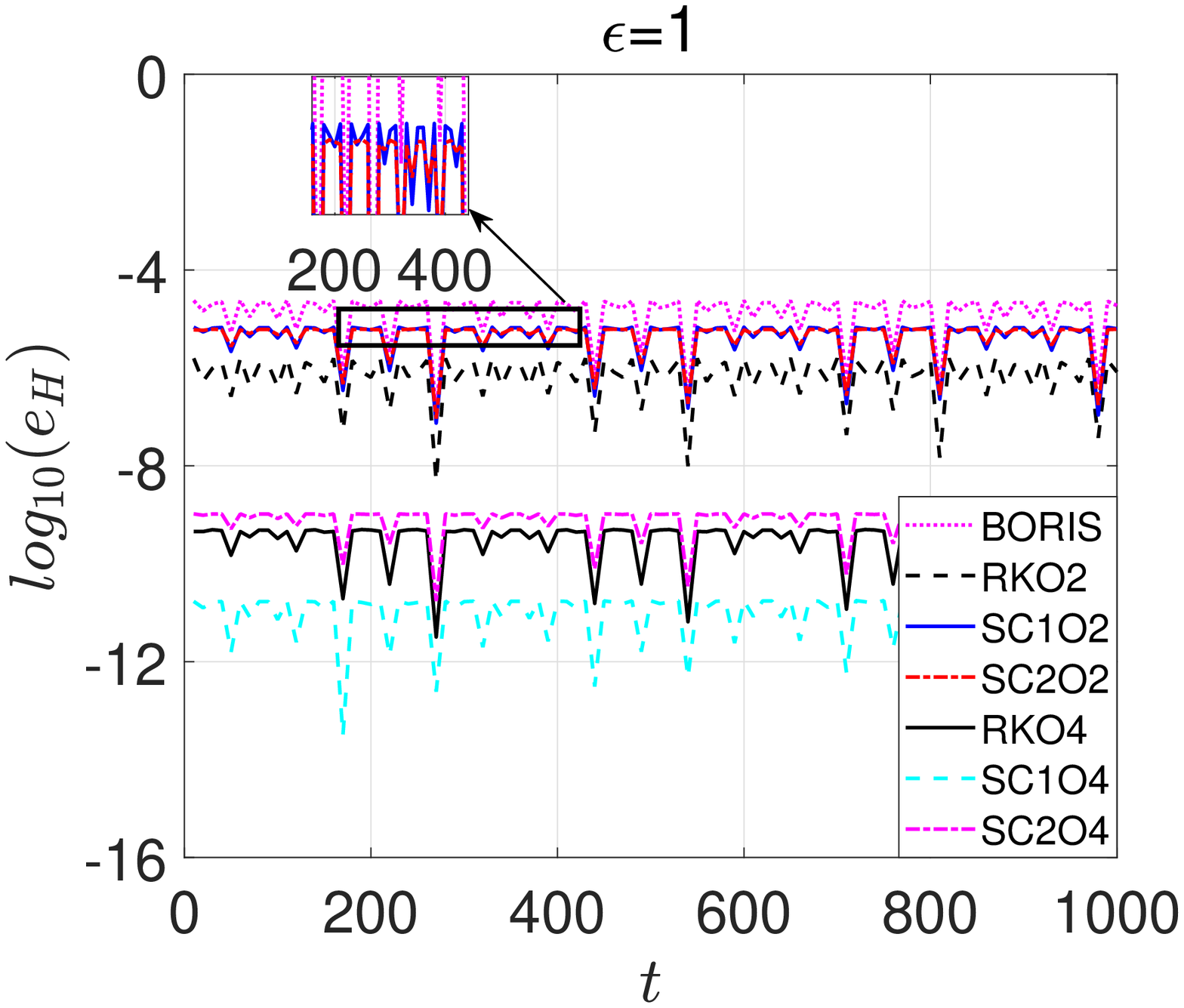} & \includegraphics[width=4.7cm,height=4.45cm]{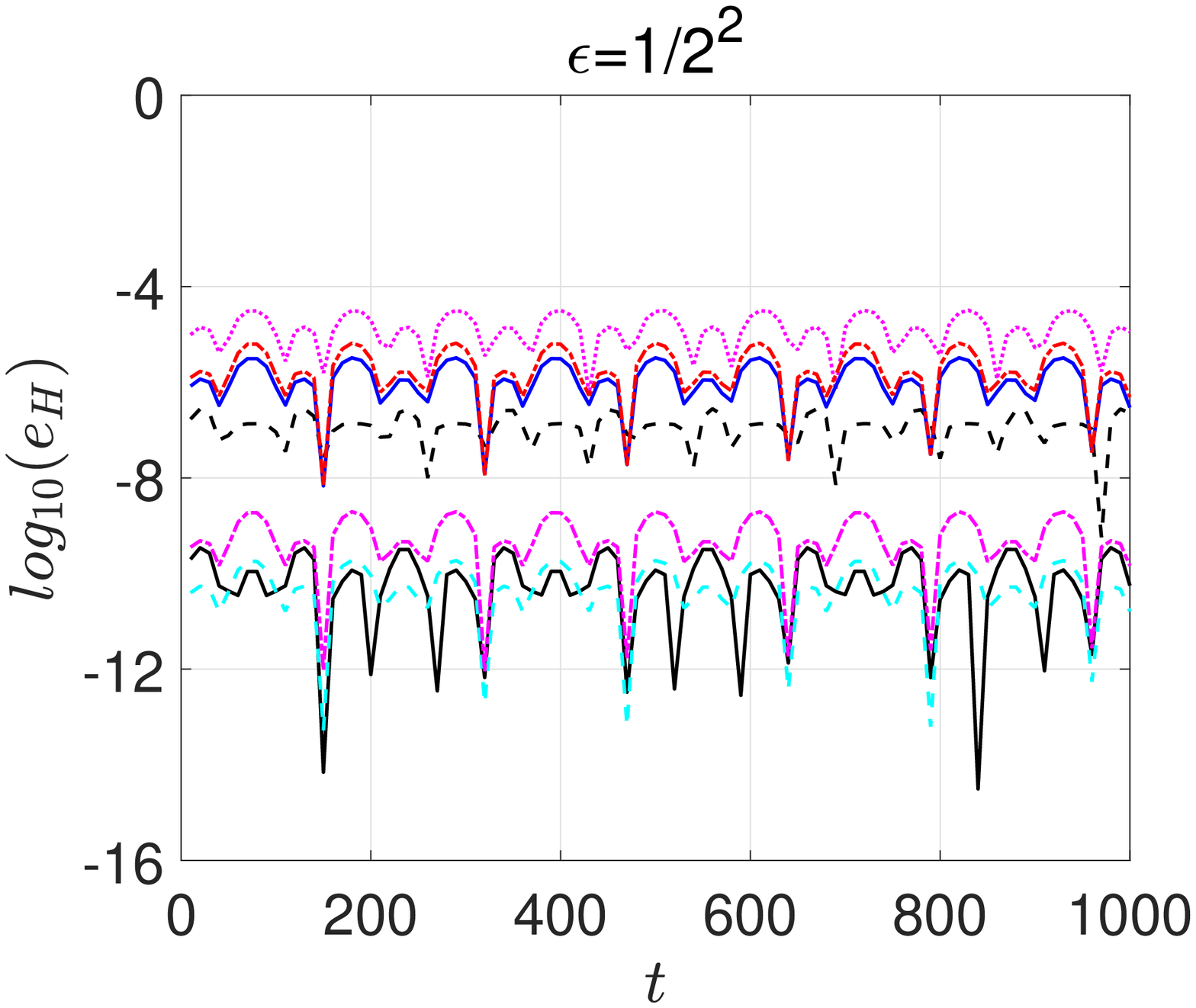} & \includegraphics[width=4.7cm,height=4.45cm]{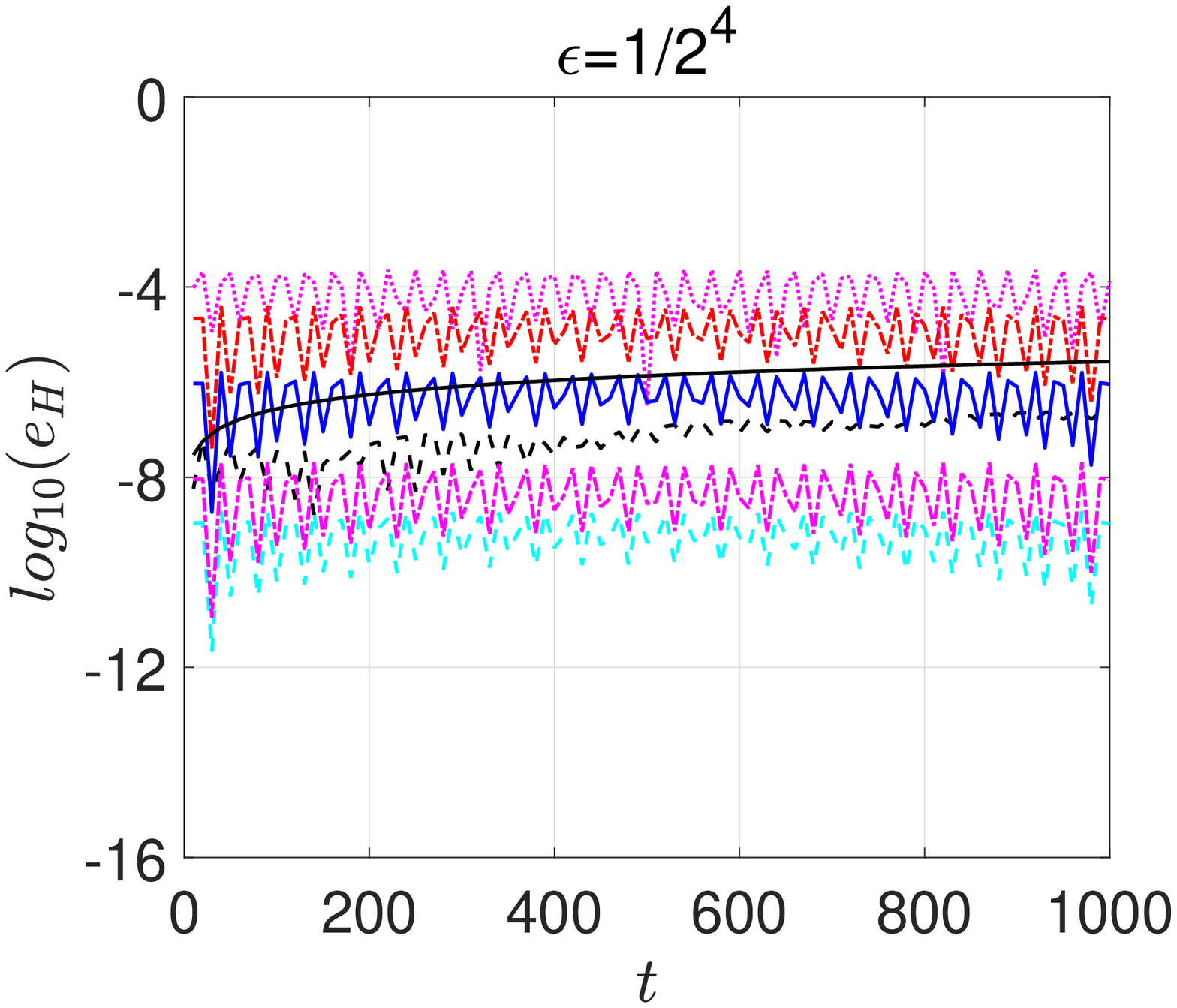}
\end{tabular}
\caption{Problem 1. Evolution of the energy error $e_{H}:=\frac{|H(x_{n},v_n)-H(x_0,v_0)|}{|H(x_0,v_0)|}$ as function of time  $t=nh$. }
\label{fig:problem12}
\end{figure}


\begin{figure}[t!]
\centering\tabcolsep=0.4mm
\begin{tabular}
[c]{cccc}%
\includegraphics[width=3.8cm,height=4.45cm]{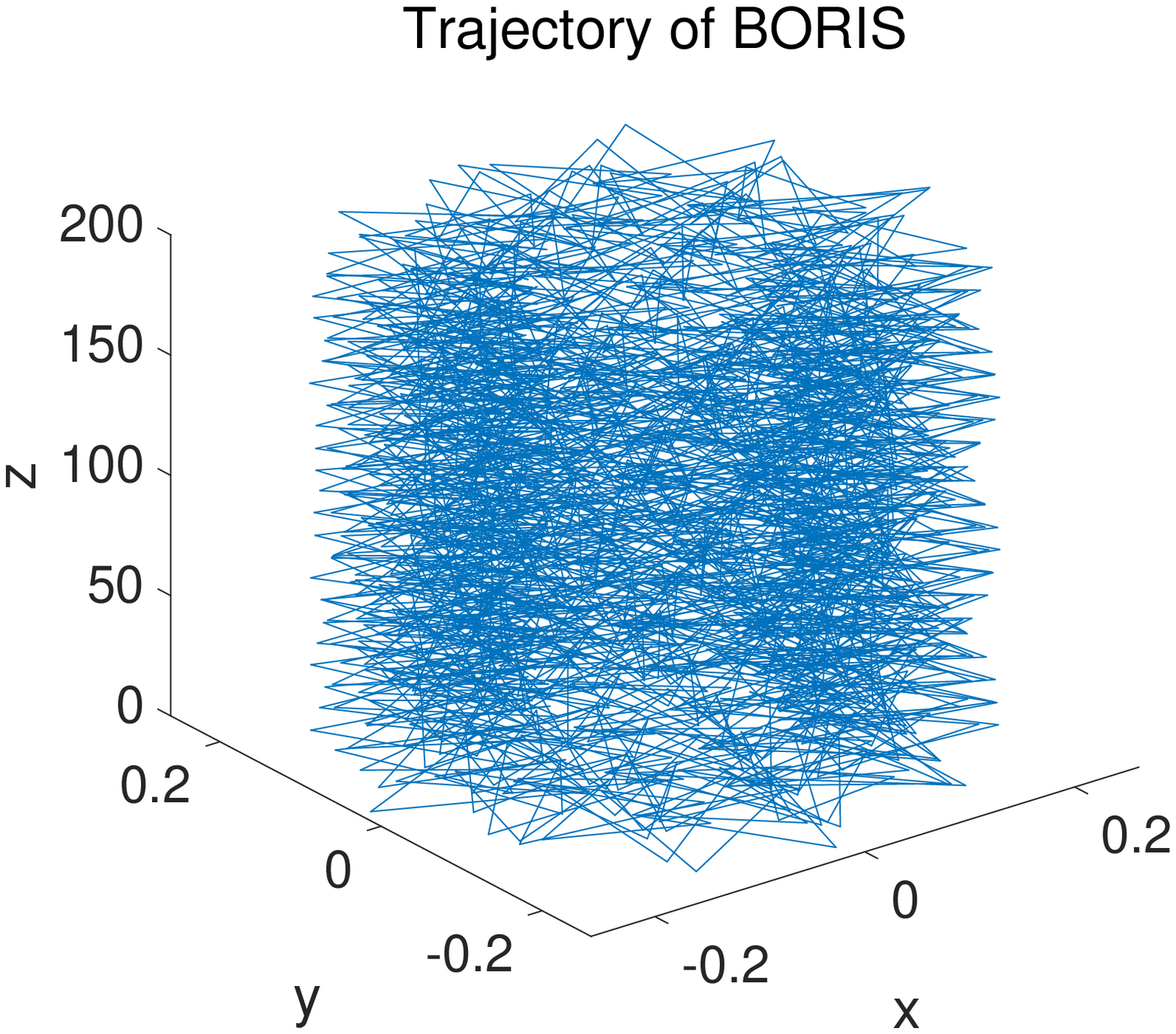} &\includegraphics[width=3.8cm,height=4.45cm]{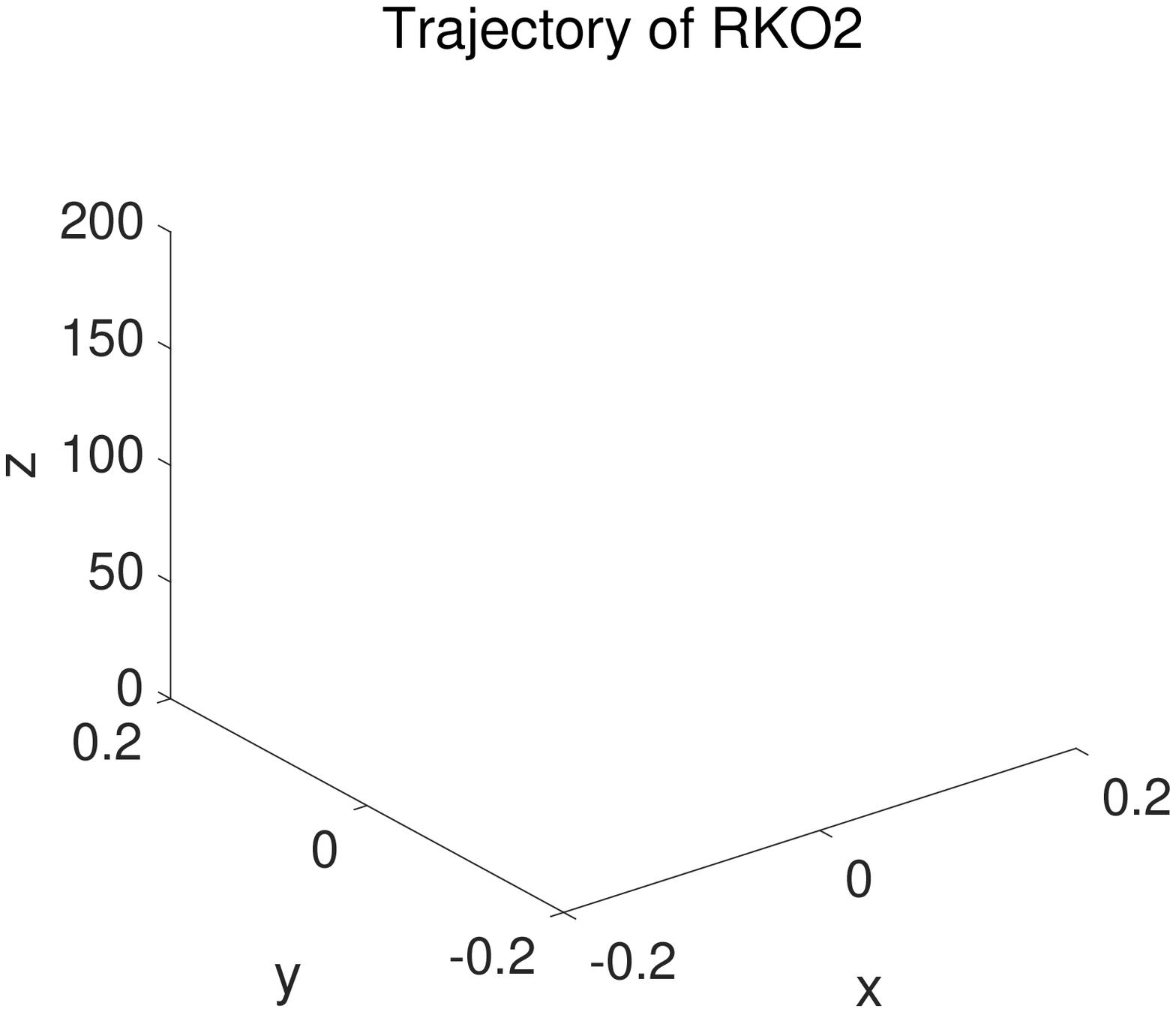} & \includegraphics[width=3.8cm,height=4.45cm]{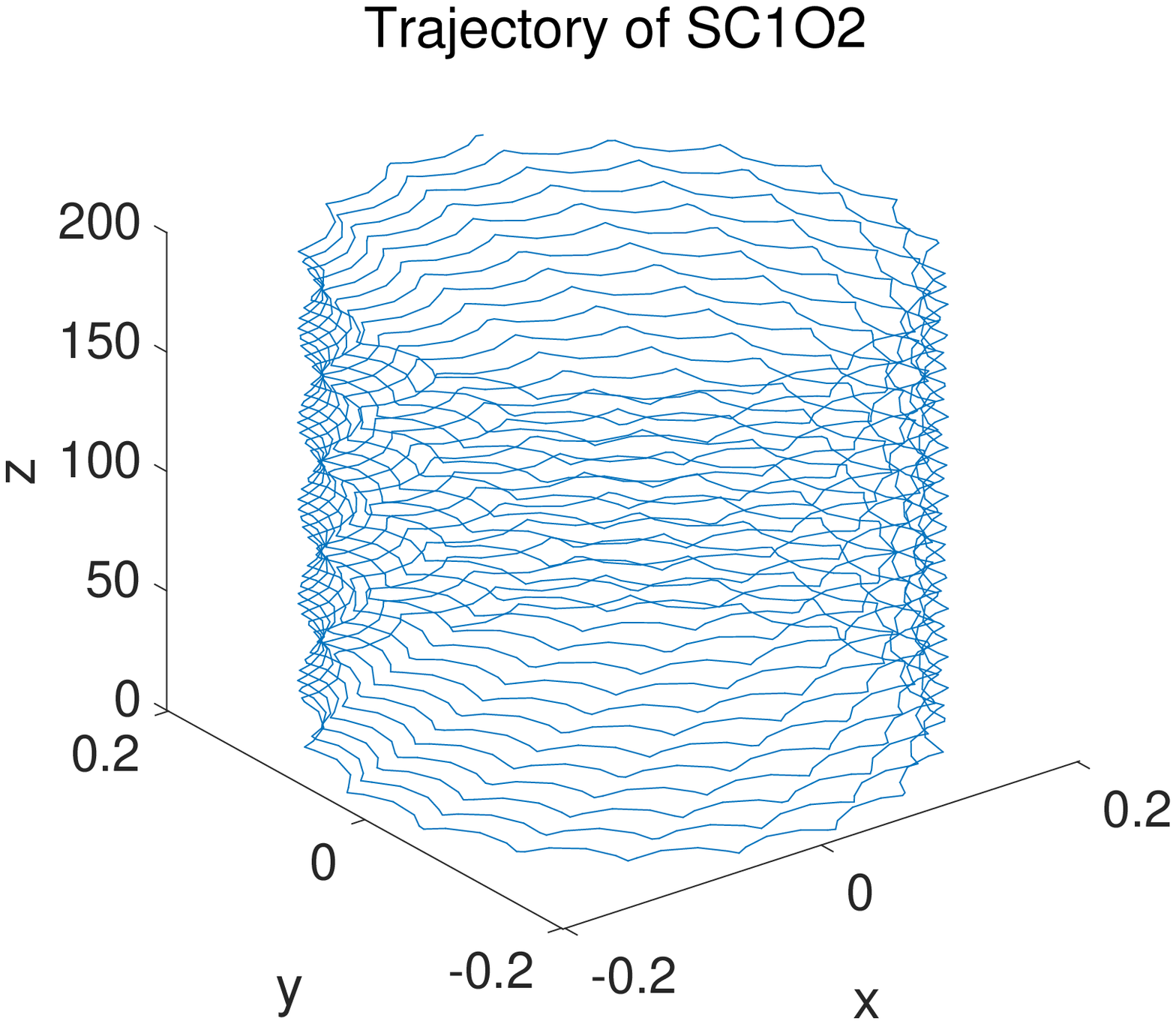} & \includegraphics[width=3.8cm,height=4.45cm]{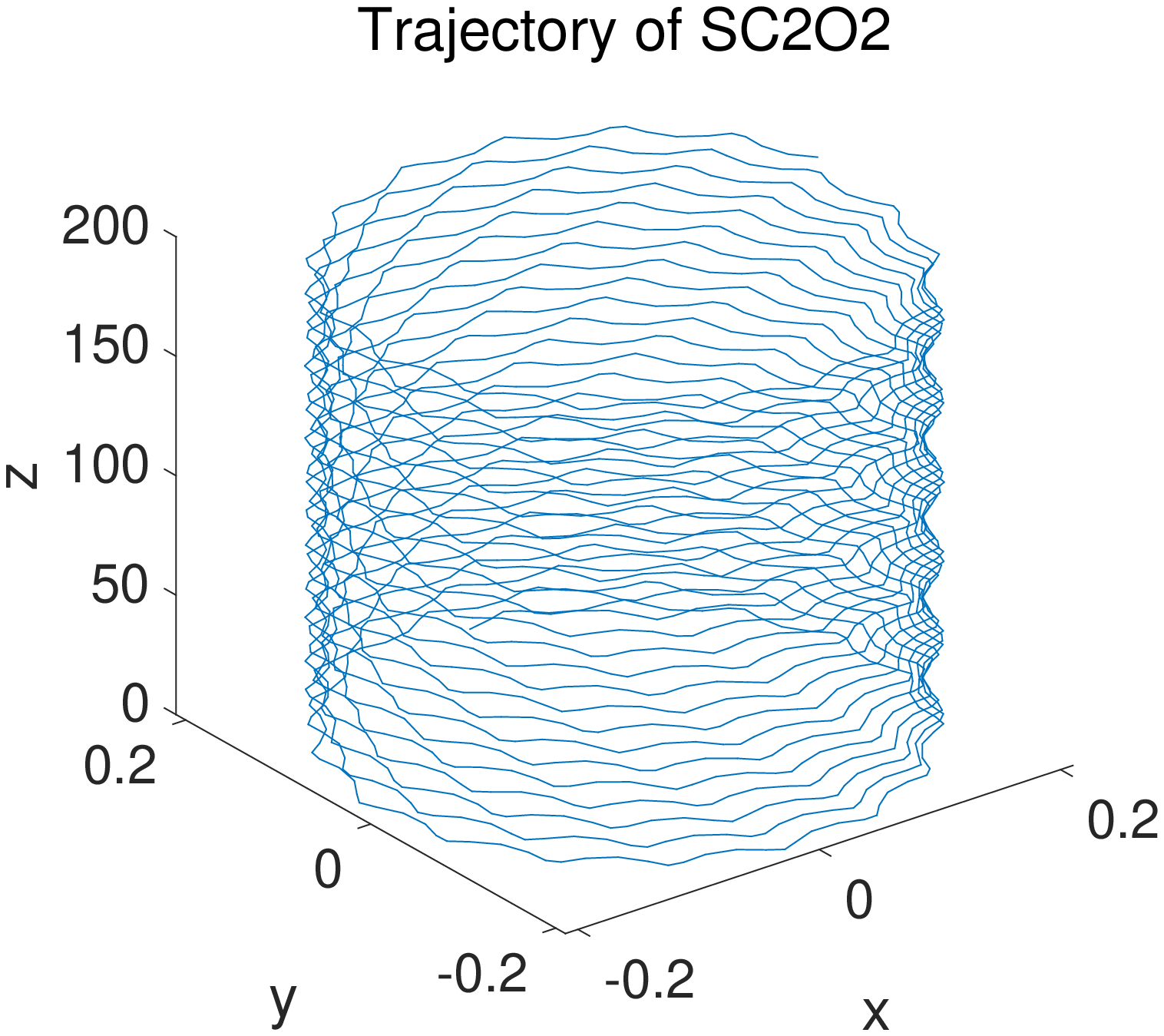}
\end{tabular}
\caption{Problem 1.  The trajectory of second order methods (BORIS, RKO2, SC1O2 and SC2O2) in [x y z] space with $t=1000$, $h=1/2$ and $\epsilon=0.1$. }
\label{fig:problem11new3}
\end{figure}

\section{Extension to CPD with non-homogeneous magnetic field}\label{sec:general}
In this section, we consider  {the extension of the above symplectic methods and corresponding error estimates}  to  the following CPD with a non-homogeneous  magnetic field  $B(x)$:
\begin{equation}\label{charged-particle}
\begin{array}[c]{ll}
\ddot{x}=\dot{x} \times \frac{B(x)}{\epsilon}  +F(x), \quad
x(0)=x_0,\quad \dot{x}(0)=\dot{x}_0.
\end{array}
\end{equation}
We remark that the above methods cannot be directly applied and some novel modifications are needed in the construction of the new methods.

\subsection{Methods and their convergence}
\begin{algo} \label{alg:EPSX}  \textbf{(Adapted exponential  methods)}
For solving the CPD \eqref{charged-particle}  in a non-homogeneous  magnetic field $B (x)$, define the following continuous-stage
 adapted exponential  method  (denoted by SG1O1)
{\begin{equation}\label{SG1O1}
\begin{aligned}
&X_{\tau}=x_{n}+ \tau h\varphi_{1}(\tau hM_n) v_{n}+
h^2 \int_{0}^{1}\frac{\tau-\sigma}{2}\varphi_{1}((\tau-\sigma) hM_n)F (X_\sigma)d\sigma,\\
&x_{n+1}=x_{n}+ h\varphi_1(hM_n)
v_{n}+h^2 \int_{0}^{1}(1-\tau)\varphi_{1}((1-\tau)hM_n)F (X_{\tau})d\tau,\\
&v_{n+1}=\varphi_0(hM_n)v_{n}+h
\int_{0}^{1}\varphi_{0}((1-\tau)hM_n)F(X_{\tau})d\tau,
\end{aligned}
\end{equation} with $M_n=\frac{1}{\epsilon}\tilde{B}(x_{n})$.

The second scheme is obtained by considering another approximation of $\tilde{B}$  which leads to}
\begin{equation}\label{SG1O2}
\begin{aligned}
&X_{\tau}=x_{n}+ \tau h\varphi_{1}(\tau h\tilde{M}_n) v_{n}+
h^2 \int_{0}^{1}\frac{\tau-\sigma}{2}\varphi_{1}((\tau-\sigma) h\tilde{M}_n)F (X_\sigma)d\sigma,\\
&x_{n+1}=x_{n}+ h\varphi_1(h\tilde{M}_n)
v_{n}+h^2 \int_{0}^{1}(1-\tau)\varphi_{1}((1-\tau)h\tilde{M}_n)F (X_{\tau})d\tau,\\
&v_{n+1}=\varphi_0(h\tilde{M}_n)v_{n}+h
\int_{0}^{1}\varphi_{0}((1-\tau)h\tilde{M}_n)F(X_{\tau})d\tau,\\
\end{aligned}
\end{equation}
where  $\tilde{M}_n=\frac{1}{\epsilon}\tilde{B}(\frac{x_{n}+x_{n+1}}{2})$. We shall call it as SG1O2.
		
The third method is formulated as follows. Considering a Triple Jump splitting method \cite{Hairer2002} and denoting  	SG1O2  by $\Upsilon_{h}$, then we get a splitting scheme:
 $	\Phi_h=\Upsilon_{\kappa_1h}\circ \Upsilon_{\kappa_2h}\circ\Upsilon_{\kappa_3h}$,
where  $\kappa_1=\kappa_3=\frac{1}{2-\sqrt[3]{2}}$ and $\kappa_2=-\frac{\sqrt[3]{2}}{2-\sqrt[3]{2}}$. We denote this  method by SG1O4.
\end{algo}

The following theorem states the convergence of the above three methods.

\begin{mytheo}\label{order condition-B(x)} \textbf{(Convergence)}
For the CPD
\eqref{charged-particle}, we assume that its solution is sufficiently smooth, and the function  $ F$ is locally Lipschitz-continuous and
sufficient differentiable.
Under these conditions and  $ h \leq C \epsilon$, the global errors of SG1O1,  SG1O2 and  SG1O4
are respectively given by
\begin{subequations}\label{RR}
\begin{align}&\textmd{SG1O1}:\quad\|x(t_{n+1})-x_{n+1}\|\leq C h,\qquad \qquad  \|v(t_{n+1})-v_{n+1}\| \leq C h/\epsilon,\label{R0}\\
&\textmd{SG1O2}:\quad\|x(t_{n+1})-x_{n+1}\|\leq C h^2/\epsilon,\qquad  \ \    \|v(t_{n+1})-v_{n+1}\| \leq C h^2/\epsilon^2,\label{R1}\\
&\textmd{SG1O4}:\quad\|x(t_{n+1})-x_{n+1}\|\leq C h^4/\epsilon^3,\qquad \ \|v(t_{n+1})-v_{n+1}\|\leq C h^4/\epsilon^4, \label{R2} \end{align}
\end{subequations} where $0<n<T/h$ and the
generic constant  $C>0$ is independent of $\epsilon,\ h,\ n$.
\end{mytheo}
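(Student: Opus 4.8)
The obstruction, already flagged in the introduction, is that for a non-homogeneous field the matrix $M(x)=\tilde B(x)/\eps$ varies along the orbit, so the clean variation-of-constants formula \eqref{VOC} that powers the proof of Theorem~\ref{order condition} is no longer available. The plan is to restore it on each step by \emph{freezing} the matrix. For the step $t_n\to t_{n+1}$ I would introduce the linearized system
\begin{equation*}
\ddot y = N\dot y + F(y),\qquad y(t_n)=x(t_n),\quad \dot y(t_n)=v(t_n),
\end{equation*}
with the constant matrix $N=M_n=\tilde B(x_n)/\eps$ for SG1O1 and $N=\tilde M_n=\tilde B((x_n+x_{n+1})/2)/\eps$ for SG1O2. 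Since $N$ is constant on the step, $y$ obeys an exact variation-of-constants formula identical in form to \eqref{VOC}, and the schemes \eqref{SG1O1}--\eqref{SG1O2} are \emph{precisely} the homogeneous-field scheme of Algorithm~\ref{algo3.2} applied to this linearized flow. The one-step error then splits as
\begin{equation*}
x(t_{n+1})-x_{n+1}=\underbrace{\big(x(t_{n+1})-y(t_{n+1})\big)}_{\text{linearization defect}}+\underbrace{\big(y(t_{n+1})-x_{n+1}\big)}_{\text{scheme error on the linearized flow}},
\end{equation*}
and analogously for the velocity.

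\textbf{The two contributions.} The second piece is handled verbatim by the argument of Theorem~\ref{order condition}: Taylor-expanding the $\varphi$-functions of the constant matrix $N$ and invoking the order conditions of the coefficients \eqref{coe2} reproduces the discrepancy bounds \eqref{local error} with $M$ replaced by $N$. For the first piece I would set $e=x-y$, subtract the two second-order equations to obtain
\begin{equation*}
\ddot e = N\dot e + \big(M(x)-N\big)\dot x + \big(F(x)-F(y)\big),\qquad e(t_n)=\dot e(t_n)=0,
\end{equation*}
and feed this into the linearized variation-of-constants formula. Everything then reduces to controlling the frozen-matrix defect $M(x(t))-N=\big(\tilde B(x(t))-\tilde B(x_n\ \text{or}\ x_{\mathrm{mid}})\big)/\eps$ integrated against the kernels $\varphi_1((1-\tau)hN)$ and $\varphi_0((1-\tau)hN)$, whose uniform boundedness is secured by the hypothesis $h\le C\eps$ (which keeps $\norm{hN}$ bounded).

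\textbf{Main obstacle.} The decisive point is the sharp $\eps$-tracking of this defect, and it is here that SG1O1 and SG1O2 part ways. Over one step $\norm{x(t)-x_n}=\mathcal O(h)$, so $M(x(t))-N=\mathcal O(h/\eps)$, which is merely $\mathcal O(1)$ when $h\sim\eps$; moreover $\norm{hN}=\mathcal O(1)$, so the kernels $\varphi_j(hN)$ vary at the full $\mathcal O(1)$ scale over the step and a crude Taylor expansion in $hN$ is illegitimate. For SG1O1 the matrix is frozen at the left endpoint, the method is \emph{not} time-symmetric, and the leading linear-in-time part of the defect survives the integration, yielding the first-order estimate \eqref{R0}. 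For SG1O2 the matrix is frozen at the midpoint $(x_n+x_{n+1})/2\approx x(t_n+h/2)$, which renders the one-step map time-reversible; the sharp bounds \eqref{R1} cannot be read off from a naive estimate but must be extracted from this symmetry, expanding $\tilde B(x(t))$ to second order about the midpoint and using the reversibility of SG1O2 to annihilate the odd (linear) part of the defect while retaining the full $\varphi$-kernels. Making this cancellation rigorous --- and verifying that the implicit dependence of $\tilde M_n$ on the unknown $x_{n+1}$ does not spoil the order --- is the crux of the whole argument.

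\textbf{Global errors and SG1O4.} With the one-step bounds in hand, I would assemble the global estimates by the discrete Gronwall argument of Theorem~\ref{order condition}, with one extra complication: because $N$ is built from the numerical $x_n$, the propagator $\varphi_j(hN)$ differs from $\varphi_j(hM(x(t_n)))$ by $\mathcal O\big(\tfrac{h}{\eps}\norm{e_n^x}\big)$ (Lipschitz continuity of $\varphi_j$ on the bounded set $\norm{hN}\le C$), so the recursion carries a term amplified by $1/\eps$; the hypothesis $h\le C\eps$ is exactly what keeps this amplification and the resulting stability constants $\eps$-uniform, giving \eqref{R0}--\eqref{R1}, with the extra power of $1/\eps$ in the velocity bounds tracing back to the $N=\mathcal O(1/\eps)$ factor in the $v$-equation. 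Finally, the fourth-order bound \eqref{R2} follows from the standard order theory of the Triple Jump: SG1O2 is a symmetric second-order integrator $\Upsilon_h$ admitting an $\eps$-graded error expansion, so the palindromic composition $\Phi_h=\Upsilon_{\kappa_1h}\circ\Upsilon_{\kappa_2h}\circ\Upsilon_{\kappa_3h}$ with $\kappa_1=\kappa_3=1/(2-\sqrt[3]{2})$, $\kappa_2=-\sqrt[3]{2}/(2-\sqrt[3]{2})$ annihilates the third-order term and raises the order to four, each substep retaining its $\varphi$-function bounds since $|\kappa_i|h\le C\eps$ as well.
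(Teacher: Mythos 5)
Your proposal follows essentially the same route as the paper's proof: a per-step frozen-field linearized system, the splitting of the one-step error into a linearization defect plus the homogeneous-field scheme error (handled as in Theorem~\ref{order condition}), the midpoint expansion of $\tilde B$ whose odd $(s-h/2)$ term is annihilated against the exponential kernel to yield the sharp SG1O2 defect bounds, a Gronwall recursion under $h\le C\eps$, and standard composition theory for SG1O4. The only differences are bookkeeping (you freeze the matrix at numerical rather than exact positions, so the numerical-vs-exact matrix discrepancy enters your defect instead of the paper's $\gamma_n^x,\gamma_n^v$ terms) and framing (the paper's cancellation comes from the vanishing moment of $(s-h/2)$ via integration by parts rather than "reversibility" of the method), neither of which changes the substance.
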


\begin{proof} {The proof is firstly given for the errors of SG1O2 and then the results of the other methods can be derived with
 similar procedure.}

 { $\bullet$  \textbf{Proof of \eqref{R1}.}}
\textbf{Linearized system.} The first step is to rewrite   the system \eqref{charged-particle} as
\begin{equation}\label{charged-particle:B(x)}
\begin{aligned}
 \dot{x}=v,\ \ \dot{v}=v \times \frac{B(x)}{\epsilon}  +F(x),\ \
 x(0)=x_0,\ \ v(0)=v_0.
\end{aligned}
\end{equation}
For some $t=t_n+s$ with $n\geq 0$, we consider the following  linearized system of \eqref{charged-particle:B(x)}
\begin{equation}\label{linearized system}
\begin{aligned}
\dot{\tilde{x}}_n(s)=\dot{\tilde{v}}_n(s),\ \
\dot{\tilde{v}}_n(s)=\frac{1}{\epsilon}\tilde{B}(x_{n+\frac{1}{2}})\tilde{v}_n(s) +F({\tilde{x}}_n(s)),\ \
{\tilde{x}}_n(0)=x(t_n),\ \ {\tilde{v}}_n(0)=v(t_n),\ \ \ 0<s\leq h, 
\end{aligned}
\end{equation}
where {$x_{n+\frac{1}{2}}=
\frac{x (t_{n})+x (t_{n+1})}{2} $}.

\textbf{Local errors between the original and linearized systems.}
Then we denote the local errors between the above two systems by
 $$\xi^{x}_{n}(s)=x(t_{n}+s)-\tilde{x}_{n}(s), \ \ \xi^{v}_{n}(s)=v(t_{n}+s)-\tilde{v}_{n}(s),\ \ \ \ 0\leq n <\frac{T}{h}.$$
It is easily obtained from  \eqref{charged-particle:B(x)} and  \eqref{linearized system} that
\begin{equation}\label{er1}
 \begin{aligned}
&\dot{\xi}^{x}_{n}(s)=\xi^{v}_{n}(s),\ \
 \dot{\xi}^{v}_{n}(s)=\frac{1}{\epsilon} \tilde{B}(x_{n+\frac{1}{2}})\xi^{v}_{n}(s) +F(x(t_n+s))-F(\tilde{x}_{n}(s))
+\varsigma^{0}_{n}(s).
\end{aligned}
\end{equation}
Here we denote $\varsigma^{0}_{n}(s)=\frac{1}{\epsilon}\big(\tilde{B}\big(x(t_{n}+s)\big)-\tilde{B}\big(x_{n+\frac{1}{2}}\big)\big)v(t_{n}+s)$,
and using the Taylor expansion yields
\begin{equation*}
\begin{aligned}
&\tilde{B}(x(t_{n}+s))-\tilde{B}\big(x_{n+\frac{1}{2}}\big)
=\tilde{B}(x(t_n))+\frac{d\tilde{B}}{dx}(\dot{x}(t_n)s+\mathcal{O}(s^2/\epsilon))+{\mathcal{O}(s^3/\epsilon)}
-\tilde{B}(x(t_n))\\
&\ \   -\frac{d\tilde{B}}{dx}(\dot{x}(t_n)h/2+\mathcal{O}(h^2/\epsilon))-\mathcal{O}(h^2/\epsilon)
=\tilde{B}^{(1)}_n(s-h/2)+\mathcal{O}(h^2/\epsilon),
\end{aligned}
\end{equation*}
where $\tilde{B}^{(i)}_n=\frac{d^i}{dt^i}\tilde{B}(x(t))\mid_{t=t_n},\ \ i=1,2$.
Likewise, we get
$$v(t_n+s)=v(t_n)+\dot{v}(t_n)s+\mathcal{O}(s^2/\epsilon^2).$$
Combining the above two formulae, one arrives at
\begin{equation}\label{err0}
\begin{aligned}
\varsigma^{0}_{n}(s)= \frac{1}{\epsilon}\tilde{B}_{n}^{(1)}(s-h/2)v(t_n)+\frac{1}{\epsilon}\tilde{B}_{n}^{(1)} s (s-h/2)\dot{v}(t_n)+\mathcal{O}(h^2/\epsilon^2).\\
\end{aligned}
\end{equation}
Then  taking  the variation-of-constant formula of \eqref{er1} into account, we have that
\begin{equation*}
\begin{aligned}
&\xi^{x}_{n}(h)=\int_{0}^{h}\xi^{v}_{n}(s)ds,\ \
 \xi^{v}_{n}(h)=\int_{0}^{h}e^{\frac{h-s}{\epsilon}\tilde{B}(x_{n+\frac{1}{2}})}
\bigg(\int_{0}^{1}\nabla F(x(t_n+s)+(\sigma-1)\xi^{x}_{n}(s))\xi^{x}_{n}(s)d\sigma+\varsigma^{0}_{n}(s)\bigg)ds,
\end{aligned}
\end{equation*}
which further gives
\begin{equation}\label{xi}
\begin{aligned}
&\xi^{x}_{n}(h)=\int_{0}^{h}\int_{0}^{s}e^{\frac{s-\delta}{\epsilon}\tilde{B}(x_{n+\frac{1}{2}})}
\int_{0}^{1}\nabla F(x(t_n+\delta)+(\sigma-1)\xi^{x}_{n}(\delta))\xi^{x}_{n}(s)d\sigma d\delta ds\\
&\ \ \ \  \qquad+\int_{0}^{h}\int_{0}^{s}e^{\frac{s-\delta}{\epsilon}\tilde{B}(x_{n+\frac{1}{2}})}\varsigma^{0}_{n}(\delta)d\delta ds.\\
\end{aligned}
\end{equation}
Inserting the formula \eqref{err0} into the second part of \eqref{xi}, we get
\begin{equation*}
\begin{aligned}
&\int_{0}^{h}\int_{0}^{s}e^{\frac{s-\delta}{\epsilon}\tilde{B}(x_{n+\frac{1}{2}})}\varsigma^{0}_{n}(\delta)d\delta ds
= \underbrace{\frac{1}{\epsilon}\int_{0}^{h}\int_{0}^{s}e^{\frac{s-\delta}{\epsilon}\tilde{B}(x_{n+\frac{1}{2}})}B_{n}^{(1)}
 (\delta-h/2)v(t_n)d\delta ds}_{\textmd{part}\ I}\\
& \quad +\underbrace{\frac{1}{\epsilon}\int_{0}^{h}\int_{0}^{s}e^{\frac{s-\delta}{\epsilon}\tilde{B}(x_{n+\frac{1}{2}})}B_{n}^{(1)}
\delta(\delta-h/2)\dot{v}(t_n) d\delta ds}_{\textmd{part}\ II}+\mathcal{O}(h^4/\epsilon^2).
\end{aligned}
\end{equation*}
With some calculations, it is easy to have that
$$\textmd{part}\ I=\mathcal{O}(h^3/\epsilon),\quad\textmd{part}\ II=\mathcal{O}(h^5/\epsilon^2).$$
Considering further that $ h \leq C \epsilon$, we get
$
\int_{0}^{h}\int_{0}^{s}e^{\frac{s-\delta}{\epsilon}\tilde{B}(x_{n+\frac{1}{2}})}\varsigma^{0}_{n}(\delta)d\delta ds=\mathcal{O}(h^3/\epsilon).$
Hence, the local error $\xi^{x}_{n}(h)$ is bounded by
\begin{equation}\label{xix}
\xi^{x}_{n}(h)=\mathcal{O}(h^3/\epsilon).
\end{equation}
In the same way, it is arrived that
\begin{equation*}
\begin{aligned}
&\int_{0}^{h}e^{\frac{h-s}{\epsilon}\tilde{B}(x_{n+\frac{1}{2}})}\varsigma^{0}_{n}(s) ds
=\frac{1}{\epsilon}\int_{0}^{h}e^{\frac{h-s}{\epsilon}\tilde{B}(x_{n+\frac{1}{2}})}v(t_n)
 (s-h/2) B_{n}^{(1)}ds\\
 &\quad+\frac{1}{\epsilon}\int_{0}^{h}e^{\frac{h-s}{\epsilon}\tilde{B}(x_{n+\frac{1}{2}})}\dot{v}(t_n)
 s(s-h/2) B_{n}^{(1)}ds+\mathcal{O}(h^3/\epsilon^2)=\mathcal{O}(h^3/\epsilon^2),
\end{aligned}
\end{equation*}
and based on which, we thus have
\begin{equation}\label{xiv}
\xi^{v}_{n}(h)=\mathcal{O}(h^3/\epsilon^2).
\end{equation}

\textbf{Global errors.}
With the above analysis, we shall  estimate the global errors of the method \eqref{SG1O2}, i.e.,
$$e^{x}_{n+1}:=x(t_{n+1})-x_{n+1},\ \ \ e^{v}_{n+1}:=v(t_{n+1})-v_{n+1}.$$
These errors can be split into two parts
\begin{equation}\label{Global error}
e^{x}_{n+1}=\tilde{e}^{x}_{n}+\xi^{x}_{n}(h),\ \ \ e^{v}_{n+1}=\tilde{e}^{v}_{n}+\xi^{v}_{n}(h),
\end{equation}
and then converted to  evaluate $$\tilde{e}^{x}_{n}:=\tilde{x}_{n}(h)-x_{n+1},\ \ \ \tilde{e}^{v}_{n}:=\tilde{v}_{n}(h)-v_{n+1}.$$

On the basis of the method \eqref{SG1O2}, we present the local  error $\varphi_n^x$ and $\varphi_n^v$ as follows
\begin{equation}\label{truncation error}
\begin{aligned}
&\tilde{x}_{n}(\tau h)=\tilde{x}_{n}(0)+ \tau h\varphi_{1}\big(\tau h\tilde{P}\big)\tilde{v}_{n}(0)+
h^2 \int_{0}^{1}\frac{\tau-\sigma}{2}\varphi_{1}\big((\tau-\sigma) h\tilde{P}\big)
F (\tilde{x}_{n}(\sigma h))d\sigma+\varphi_n^\tau,\\
&\tilde{x}_{n}(h)=\tilde{x}_{n}(0)+ h\varphi_{1}\big(h\tilde{P}\big)
\tilde{v}_{n}(0)+h^2 \int_{0}^{1}(1-\tau)\varphi_{1}\big((1-\tau)h \tilde{P}\big)
F (\tilde{x}_{n}(\tau h))d\tau+\varphi_n^x,\\
&\tilde{v}_{n}(h)=\varphi_{0}\big( h\tilde{P}\big)\tilde{v}_{n}(0)+h
\int_{0}^{1}\varphi_{0}\big((1-\tau)h\tilde{P}\big)F(\tilde{x}_{n}(\tau h))d\tau+\varphi_n^v,\\
\end{aligned}
\end{equation}
where  $\tilde{P}=\frac{1}{\epsilon}\tilde{B}\Big(\frac{\tilde{x}_{n}(0)+\tilde{x}_{n}(h)}{2}\Big)$.
It follows from the variation-of-constant formula of the linearized system \eqref{linearized system} that
\begin{equation}\label{VOC of linearized system}
\begin{aligned}
&\tilde{x}_{n}(\tau h)=\tilde{x}_{n}(0)+ \tau h\varphi_{1}(\tau hP)
\tilde{v}_{n}(0)+h^2 \int_{0}^{\tau}(\tau-\sigma)\varphi_{1}((\tau-\sigma)h P)
F (\tilde{x}_{n}(\sigma h))d\sigma,\\
&\tilde{x}_{n}(h)=\tilde{x}_{n}(0)+ h\varphi_{1}(hP)
\tilde{v}_{n}(0)+h^2 \int_{0}^{1}(1-\tau)\varphi_{1}((1-\tau)h P)
F (\tilde{x}_{n}(\tau h))d\tau,\\
&\tilde{v}_{n}(h)=\varphi_{0}( hP)\tilde{v}_{n}(0)+h
\int_{0}^{1}\varphi_{0}((1-\tau)hP)F(\tilde{x}_{n}(\tau h))d\tau,\\
\end{aligned}
\end{equation}
where  $P=\frac{1}{\epsilon}\tilde{B}\Big(\frac{x(t_{n})+x(t_{n+1})}{2}\Big)$.
Considering the difference between   \eqref{truncation error} and  \eqref{VOC of linearized system}, we get
\begin{equation*}\label{}
\begin{aligned}
\varphi_n^\tau
&=\tau h\big(\varphi_{1}(\tau hP)-\varphi_{1}(\tau h\tilde{P})\big){v}(t_{n})
+\tau^2 h^2 \int_{0}^{1}(1-z)\varphi_{1}\big(\tau(1-z)h P\big)F \big(\tilde{x}_{n}( h \tau z)\big)dz\\
&\quad-h^2 \int_{0}^{1}\frac{(\tau-\sigma)}{2}\varphi_{1}\big((\tau-\sigma)h \tilde{P}\big)F \big(\tilde{x}_{n}(\sigma h)\big)d\sigma,\\
\varphi_n^x&=h\big(\varphi_{1}(hP)-\varphi_{1}(h\tilde{P})\big){v}(t_{n})
+h^2 \int_{0}^{1}(1-\tau)\big(\varphi_{1}\big((1-\tau)h P\big)-\varphi_{1}\big((1-\tau)h \tilde{P}\big)\big)
F \big(\tilde{x}_{n}(\tau h)\big)d\tau,\\
\varphi_n^v&=\big(\varphi_{0}(hP)-\varphi_{0}(h\tilde{P})\big){v}(t_{n})+h \int_{0}^{1}\big(\varphi_{0}\big((1-\tau)h P\big)-\varphi_{0}\big((1-\tau)h \tilde{P}\big)\big)
F \big(\tilde{x}_{n}(\tau h)\big)d\tau.
\end{aligned}
\end{equation*}
Similarly as the analysis of Theorem \ref{order condition},   it follows that
\begin{equation}\label{phixv}
\norm{\varphi_n^\tau}=\mathcal{O}(h^2),\qquad \   \norm{\varphi_n^x}=\mathcal{O}(h^3/\epsilon),\qquad
 \ \norm{\varphi_n^v}=\mathcal{O}(h^3/\epsilon).
\end{equation}

Subtracting   \eqref{SG1O2} from    \eqref{truncation error} and by further using
\eqref{Global error}, we have
\begin{equation*}\label{}
\begin{aligned}
&{e}^{x}_{n+1}={e}^{x}_{n}+h\varphi_{1}(h\tilde{P}){e}^{v}_{n}+\varphi_n^x+\xi^{x}_{n}(h)+\gamma_{n}^{x},\ \ \ \ {e}^{v}_{n+1}=\varphi_{0}(h\tilde{P}){e}^{v}_{n}+\varphi_n^v+\xi^{v}_{n}(h)+\gamma_{n}^{v},
\end{aligned}
\end{equation*}
where
\begin{equation*}\label{}
\begin{aligned}
&\gamma_{n}^{x}=h\big(\varphi_{1}(h\tilde{P})-\varphi_1(h\tilde{M}_n)\big)v_n
+h^2\int_{0}^{1}(1-\tau)\varphi_{1}\big((1-\tau)h \tilde{P}\big)\big(F (\tilde{x}_{n}(\tau h))-F (X_{\tau})\big)d\tau\\
&\qquad+h^2\int_{0}^{1}(1-\tau)\big(\varphi_{1}\big((1-\tau)h \tilde{P}\big)-\varphi_{1}\big((1-\tau)h\tilde{M}_n\big)\big)F (X_{\tau})d\tau,\\
&\gamma_{n}^{v}=\big(\varphi_{0}(h\tilde{P})-\varphi_0(h\tilde{M}_n)\big)v_n
+h\int_{0}^{1}\varphi_{0}\big((1-\tau)h \tilde{P}\big)\big(F (\tilde{x}_{n}(\tau h))-F (X_{\tau})\big)d\tau\\
&\qquad+h\int_{0}^{1}\big(\varphi_{0}\big((1-\tau)h \tilde{P}\big)-\varphi_{0}\big((1-\tau)h\tilde{M}_n\big)\big)F (X_{\tau})d\tau.\\
\end{aligned}
\end{equation*}
Considering the same process of deduction  in Theorem \ref{order condition}, one gets
\begin{equation*}
\begin{aligned}
&\norm{ \gamma_{n}^{x}}\leq \frac{h^2}{\epsilon}(\|e_n^x\|+h\|e_n^v\|+\|e_{n+1}^x\|+\|\xi_n^x(h)\|),
&\norm{ \gamma_{n}^{v}}\leq \frac{h}{\epsilon}(\|e_n^x\|+h\|e_n^v\|+\|e_{n+1}^x\|+\|\xi_n^x(h)\|).
\end{aligned}
\end{equation*}
We then obtain
\begin{equation*}\label{}
\begin{aligned}
\norm{ e_{n+1}^{x}}
&\leq \norm{ e_{n}^{x}}+h \norm{ e_{n}^{v}}+\norm{ \varphi_{n}^{x}}+\norm{ \xi_{n}^{x}(h)}+\norm{ \gamma_{n}^{x}}\\
& \leq \norm{ e_{n}^{x}}+h \big(\norm{ e_{n}^{x}}+\norm{ e_{n}^{v}}+\norm{ e_{n+1}^{x}}\big) +
 \norm{ \varphi_{n}^{x}}+\norm{ \xi_{n}^{x}(h)},
\end{aligned}
\end{equation*}
and
\begin{equation*}\label{}
\begin{aligned}
\norm{ e_{n+1}^{v}}
&\leq \norm{ e_{n}^{v}}+\norm{ \varphi_{n}^{v}}+\norm{ \xi_{n}^{v}(h)}+\norm{ \gamma_{n}^{v}}\\
&\leq \norm{ e_{n}^{v}}+\frac{h}{\epsilon} \big(\norm{ e_{n}^{x}}+\norm{ e_{n}^{v}}+\norm{ e_{n+1}^{x}}\big) +
 \norm{ \varphi_{n}^{v}}+\norm{ \xi_{n}^{v}(h)}.
\end{aligned}
\end{equation*}
Combining $\norm{ e_{n+1}^{x}}$ and $\epsilon \norm{ e_{n+1}^{v}}$, we have
\begin{equation*}\label{}
\begin{aligned}
&\norm{ e_{n+1}^{x}}+\epsilon \norm{ e_{n+1}^{v}}- \norm{ e_{n}^{x}}- \epsilon \norm{ e_{n}^{v}}\\
\leq & h(\|e_n^x\|+\|e_n^v\|+\|e_{n+1}^x\|)+\norm{ \varphi_{n}^{x}}+\epsilon \norm{ \varphi_{n}^{v}}
+\norm{ \xi_{n}^{x}(h)}+\epsilon \norm{ \xi_{n}^{v}(h)}.\\
\end{aligned}
\end{equation*}
Further,  using recursion formula  and  $e_{0}^{x}=e_{0}^{v}=0$, it is arrived that
\begin{equation*}\label{}
\begin{aligned}
\norm{ e_{n+1}^{x}}+\epsilon \norm{ e_{n+1}^{v}}
\leq h\sum_{m=0}^{n}(\|e_m^x\|+\|e_m^v\|+\|e_{m+1}^x\|)+\sum_{m=0}^{n}(\norm{ \varphi_{m}^{x}}+\epsilon \norm{ \varphi_{m}^{v}}+\norm{ \xi_{m}^{x}(h)}+\epsilon \norm{ \xi_{m}^{v}(h)}).\\
\end{aligned}
\end{equation*}
By inserting the errors  \eqref{xix}, \eqref{xiv} and \eqref{phixv} into the above formula and using the fact that $nh <T$,  one has
\begin{equation*}\label{}
\begin{aligned}
\norm{ e_{n+1}^{x}}+\epsilon \norm{ e_{n+1}^{v}}
\leq h\sum_{m=0}^{n}(\|e_m^x\|+\|e_m^v\|+\|e_{m+1}^x\|)+Ch^2/\epsilon.
\end{aligned}
\end{equation*}
Using the Gronwall's inequality, we  get
\begin{equation*}\label{}
\begin{aligned}
\norm{ e_{n+1}^{x}}+\epsilon \norm{ e_{n+1}^{v}}
\leq Ch^2/\epsilon.
\end{aligned}
\end{equation*}
Therefore,  the global error \eqref{R1} of  the method \eqref{SG1O2} is obtained immediately.

 $\bullet$  \textbf{Proof of \eqref{R0}.}
This proof  is  also divided into  three parts as the analysis of \eqref{R1}.

Firstly, we denote the linearized system of \eqref{charged-particle:B(x)} as
\begin{equation}\label{linearized system-R0}
\begin{aligned}
\dot{\tilde{x}}_n(s)=\dot{\tilde{v}}_n(s),\
\dot{\tilde{v}}_n(s)=\frac{1}{\epsilon}\tilde{B}(x(t_n))\tilde{v}_n(s) +F({\tilde{x}}_n(s)),\
{\tilde{x}}_n(0)=x(t_n),\  {\tilde{v}}_n(0)=v(t_n),\  0<s\leq h. 
\end{aligned}
\end{equation}

Then  the local errors between the original \eqref{charged-particle:B(x)}  and linearized systems \eqref{linearized system-R0} are given by
$$\xi^{x}_{n}(h)=\mathcal{O}(h^2),\qquad \xi^{v}_{n}(h)=\mathcal{O}(h^2/\epsilon),$$
which can be obtained in the same way by the proof of \eqref{R1}.

Finally, according to  the above analysis, we get the final conclusion as follows
$$\norm{ e_{n+1}^{x}}+\epsilon \norm{ e_{n+1}^{v}}
\leq Ch,$$
which proves  the result of \eqref{R0}.

  $\bullet$  \textbf{Proof of \eqref{R2}.}
 It is noted that the error bound between the system \eqref{charged-particle:B(x)} and linearized system \eqref{linearized system} is $\mathcal{O}(h^2)$. Thus higher order methods can not be obtained by considering algorithms with higher order for the  linearized system \eqref{linearized system}. That is the reason why we  use the idea of Triple Jump splitting   method to construct the fourth-order method SG1O4. The convergence of this splitting can be derived by the standard analysis (see, e.g.,  \cite{Hairer2002}) and we omit it for brevity.

The proof of the theorem is finished.
\end{proof}
For the special but important magnetic field,  the so-called \emph{maximal ordering scaling} \cite{scaling1,Lubich2020,scaling2}:
\begin{equation}\label{ma0-charged-particle:B(x)}
\begin{aligned}
 \dot{x}=v,\ \ \dot{v}=v \times \frac{ B(\epsilon x)}{\epsilon}  +F(x),\ \
 x(0)=x_0,\ \ v(0)=v_0,
\end{aligned}
\end{equation}
the improved convergent results can be obtained which are shown as follows.

\begin{mytheo}\label{order conditionN}  \textbf{(Improved error bounds)}
Under the conditions of Theorem \ref{order condition-B(x)}, if  the magnetic field has  the   \emph{maximal ordering scaling}, i.e., $B=B(\eps x)$, we have the following improved error bounds
\begin{subequations}\label{RRNEW}
\begin{align}&\textmd{SG1O1}: \quad \norm{x(t_{n+1})-x_{n+1}}\leq C h,\quad \quad \ \ \   \norm{v(t_{n+1})-v_{n+1}} \leq C h,\label{R0N}\\
&\textmd{SG1O2}: \quad \norm{x(t_{n+1})-x_{n+1}}\leq C h^2,\qquad  \ \   \norm{v(t_{n+1})-v_{n+1}} \leq C h^2/\epsilon,\label{R1N}\\
&\textmd{SG1O4}: \quad \norm{x(t_{n+1})-x_{n+1}}\leq C h^4/\epsilon^2,\quad \ \norm{v(t_{n+1})-v_{n+1}}\leq C h^4/\epsilon^3. \label{R2N}
\end{align}
\end{subequations}
\end{mytheo}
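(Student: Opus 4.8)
The plan is to re-run, essentially verbatim, the three-part error analysis of Theorem~\ref{order condition-B(x)} (local error $\xi$ between the original and the linearized systems; local error $\varphi$ of the scheme applied to the linearized system; and the comparison $\gamma$ between the linearized flow and the actual scheme), and to harvest one extra power of $\epsilon$ from every term that is generated by a \emph{spatial} variation of the magnetic field. The single structural fact driving all the improvements is that for $B=B(\epsilon x)$ one has $\frac{\partial}{\partial x}\tilde{B}(\epsilon x)=\epsilon\,(\nabla\tilde{B})(\epsilon x)$, so that each temporal derivative $\tilde{B}^{(i)}_n=\frac{d^i}{dt^i}\tilde{B}(x(t))\mid_{t=t_n}$ carries one more factor of $\epsilon$ than in the generic case. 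Concretely, using the a priori bounds $\|v(t)\|=\mathcal{O}(1)$ (from energy conservation) and $\|\dot{v}(t)\|=\mathcal{O}(1/\epsilon)$ (read off from \eqref{charged-particle:B(x)}), a short induction gives $\tilde{B}^{(i)}_n=\mathcal{O}(\epsilon^{2-i})$ under maximal ordering, a clean gain of $\epsilon$ over the $\mathcal{O}(\epsilon^{1-i})$ of a general field.

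First I would revisit the source term $\varsigma^{0}_{n}(s)$ between \eqref{charged-particle:B(x)} and the linearized system. In \eqref{err0} every factor $\tilde{B}^{(1)}_n$ now gains an $\epsilon$, so the dominant contribution of $\varsigma^0_n$ drops from $\mathcal{O}(h/\epsilon)$ to $\mathcal{O}(h)$. Substituting this into \eqref{xi} exactly as before, the ``part~I'' and ``part~II'' integrals each improve by one power of $\epsilon$, so the local errors \eqref{xix}--\eqref{xiv} sharpen to $\xi^x_n(h)=\mathcal{O}(h^3)$ and $\xi^v_n(h)=\mathcal{O}(h^3/\epsilon)$ for SG1O2; repeating the computation for the frozen field $\tilde{B}(x(t_n))$ of \eqref{linearized system-R0} gives $\xi^x_n(h)=\mathcal{O}(h^2)$ and $\xi^v_n(h)=\mathcal{O}(h^2/\epsilon)$ for SG1O1. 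The method-versus-linearized quantities $\varphi^x_n,\varphi^v_n$ in \eqref{phixv} and the coupling terms $\gamma^x_n,\gamma^v_n$ are all controlled by differences $P-\tilde{P}$ and $\tilde{P}-\tilde{M}_n$ of the matrix $\tilde{B}/\epsilon$ evaluated at nearby points; since each such difference is $\frac{1}{\epsilon}(\nabla\tilde{B})\cdot(\text{position difference})$, it too picks up the extra $\epsilon$, so that in particular the coupling coefficient in the bound for $\gamma^v_n$ improves from $\frac{h}{\epsilon}$ to $h$.

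With these sharpened inputs I would re-run the Gronwall recursion on $\|e^x_{n+1}\|+\epsilon\|e^v_{n+1}\|$. For SG1O2 the improved local errors and coupling yield $\|e^x_{n+1}\|+\epsilon\|e^v_{n+1}\|\le Ch^2$, which is exactly \eqref{R1N}. For SG1O1 the key point is that the $1/\epsilon$ in the generic velocity bound comes solely from the field-variation terms, all of which now gain $\epsilon$: the coupling coefficient becomes $h$ rather than $h/\epsilon$ and $\|\xi^v_n\|,\|\varphi^v_n\|=\mathcal{O}(h^2)$, so summing over $n\le T/h$ steps gives $\|e^v_{n+1}\|\le Ch$ directly, while the position error is still limited by the first-order consistency and stays $\mathcal{O}(h)$ — this is precisely the asymmetry of \eqref{R0N}. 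For SG1O4 I would retain the Triple Jump composition and invoke the standard splitting order analysis of \cite{Hairer2002}: since the fourth-order error constant is assembled from the local-error structure of the symmetric base method SG1O2, and that structure has just been shown to gain one power of $\epsilon$, the generic bound $h^4/\epsilon^3$ sharpens to $h^4/\epsilon^2$, giving \eqref{R2N}.

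The main obstacle I expect is bookkeeping rather than a new idea: one must check that the $\epsilon$-gain really reaches the \emph{dominant} term in each estimate and is not masked by a subdominant term free of field derivatives. In particular, the terms in $\varphi^\tau_n,\varphi^x_n,\varphi^v_n$ split into a purely exponential-quadrature part (independent of the spatial variation of $B$) and a part proportional to $P-\tilde{P}$, and I would need to confirm that the leading order of the total is set by the latter so that the improvement is genuine. I would also have to verify carefully the a priori $\epsilon$-scaling $\tilde{B}^{(i)}_n=\mathcal{O}(\epsilon^{2-i})$ and the corresponding scaling of the solution derivatives that enter the fourth-order splitting-error constant for SG1O4, since that is where the most delicate tracking of powers of $\epsilon$ occurs.
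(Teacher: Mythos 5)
Your proposal is correct and follows essentially the same route as the paper's own proof: re-running the linearized-system/defect/Gronwall machinery of Theorem \ref{order condition-B(x)} while harvesting one power of $\eps$ from every term generated by spatial variation of the field (since $\nabla_x\tilde{B}(\eps x)=\eps(\nabla\tilde{B})(\eps x)$, both the linearization defect $\varsigma^{0}_{n}$ and the coupling terms $\gamma^{x}_{n},\gamma^{v}_{n}$ improve, the latter from $h/\eps$-type to $h$-type coefficients), with SG1O4 deferred to the standard splitting analysis of \cite{Hairer2002} exactly as the paper does. The only blemish is an internal inconsistency: for SG1O1 you first assert $\xi^{v}_{n}(h)=\mathcal{O}(h^2/\eps)$ (the unimproved bound, which would only sum to $\mathcal{O}(h/\eps)$ and could not give \eqref{R0N}), whereas the bound $\xi^{v}_{n}(h)=\mathcal{O}(h^2)$ that you invoke later is what the maximal-ordering gain actually delivers and what the $\mathcal{O}(h)$ velocity estimate requires.
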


\begin{proof}
 $\bullet$  \textbf{Proof of \eqref{R0N}.}
Concern  the  linearized system of \eqref{ma0-charged-particle:B(x)} as follows
\begin{equation}\label{ma-linearized system}
\begin{aligned}
\dot{\tilde{x}}_n(s)=\dot{\tilde{v}}_n(s),\ \
\dot{\tilde{v}}_n(s)=\frac{1}{\epsilon}\tilde{B}(\epsilon x(t_n))\tilde{v}_n(s) +F({\tilde{x}}_n(s)),\ \
{\tilde{x}}_n(0)=x(t_n),\ \ {\tilde{v}}_n(0)=v(t_n),\ \ \ 0<s\leq h.
\end{aligned}
\end{equation}
Similarly as  the proof of Theorem \ref{order condition-B(x)},   the  errors between the  systems \eqref{ma0-charged-particle:B(x)} and \eqref{ma-linearized system} are bounded  as follows
\begin{equation*}
\xi^{x}_{n}(h)=\mathcal{O}(h^3),\ \ \ \ \ \ \ \xi^{v}_{n}(h)=\mathcal{O}(h^3/\epsilon).
\end{equation*}
Then, we shall consider the global errors of the method \eqref{SG1O1} in the same way as Theorem \ref{order condition-B(x)}, and one gets
\begin{equation*}\label{R0N-error}
\begin{aligned}
&\norm{{e}^{x}_{n+1}}\leq \norm{{e}^{x}_{n}}+h\norm{{e}^{v}_{n}}+\norm{\varphi_n^x}+\norm{\xi^{x}_{n}(h)}+\norm{\gamma_{n}^{x}},\\
&\norm{{e}^{v}_{n+1}}\leq \norm{{e}^{v}_{n}}+\norm{\varphi_n^v}+\norm{\xi^{v}_{n}(h)}+\norm{\gamma_{n}^{v}}.
\end{aligned}
\end{equation*}
The notations $\varphi_n^x$, $\varphi_n^v$, $\gamma_{n}^{x}$  and $\gamma_{n}^{v}$ are totally the same as those in the proof of \eqref{R1} with some corresponding modifications in $B$.  They are bounded by
$$
\norm{\varphi_n^x}=\mathcal{O}(h^3/\epsilon), \ \  \norm{\varphi_n^v}=\mathcal{O}(h^3/\epsilon), \ \
\norm{\gamma_n^x}\leq h^2(\norm{e_n^x}+h\norm{e_n^v}),\ \  \norm{\gamma_n^v}\leq h(\norm{e_n^x}+h\norm{e_n^v}).$$
The above results lead to
\begin{equation*}
\begin{aligned}
\norm{{e}^{x}_{n+1}}+\norm{{e}^{v}_{n+1}}-\norm{{e}^{x}_{n}}-\norm{{e}^{v}_{n}} \leq h(\norm{{e}^{x}_{n}}+\norm{{e}^{v}_{n}})
+\norm{\varphi_n^x}+\norm{\varphi_n^v}+\norm{\xi_n^x(h)}+\norm{\xi_n^v(h)}.
\end{aligned}
\end{equation*}
Therefore, we have
$$\norm{ e_{n+1}^{x}}+ \norm{ e_{n+1}^{v}} \leq Ch.$$

 $\bullet$  \textbf{Proof of \eqref{R1N}--\eqref{R2N}.}
The proof of  \eqref{R1N} and \eqref{R2N} can be obtained by the same progress with \eqref{R0N}, and so we
leave out the details for simplicity.
\end{proof}

 \subsection{Numerical tests}
 In what follows, we present two numerical experiments to show the behaviour of the derived methods. We still choose   BORIS, RKO2 and RKO4  for comparison which are given in Section \ref{sec: experiment}. {To compare the first order method SG1O1 with some existing methods, the implicit Euler method (of order one) is chosen and we refer to it as Euler.}
\noindent\vskip3mm \noindent\textbf{Problem 2. (General magnetic field)}
For the charged-particle dynamics \eqref{charged-particle sts-cons},  the  scalar potential $U(x)$ and non-homogeneous  magnetic field $\frac{1}{\epsilon}B(x)$ are given by (\cite{Lubich2017})
$$ \frac{1}{\epsilon}B(x)=\nabla \times \frac{1}{3\epsilon}\Big(-x_2\sqrt{x_{1}^2+x_{2}^2},x_1\sqrt{x_{1}^2+x_{2}^2},0\Big)^{T}
=\frac{1}{\epsilon}\Big(0,0,\sqrt{x_{1}^2+x_{2}^2}\Big)^{\intercal}, \ \ \ U(x)=\frac{1}{100\sqrt{x_{1}^2+x_{2}^2}},$$
where    initial values are chosen as
$x(0)=(0,1,0.1)^{\intercal},\ v(0)=(0.09,0.05,0.2)^{\intercal}.$
  The problem is  solved on $[0,1]$ with $h=1/2^k,$ where $k=3,\ldots,7$, and
 {Figs. \ref{fig:problem31}--\ref{fig:problem31new2} show the  results of global errors.} Then the system is integrated on the interval $ [0,100]$
with  a step size $h=\frac{1}{100}$ and see  Fig. \ref{fig:problem32} for the energy conservation.

\begin{figure}[t!]
\centering\tabcolsep=0.4mm
\begin{tabular}
[c]{ccc}%
\includegraphics[width=4.7cm,height=4.5cm]{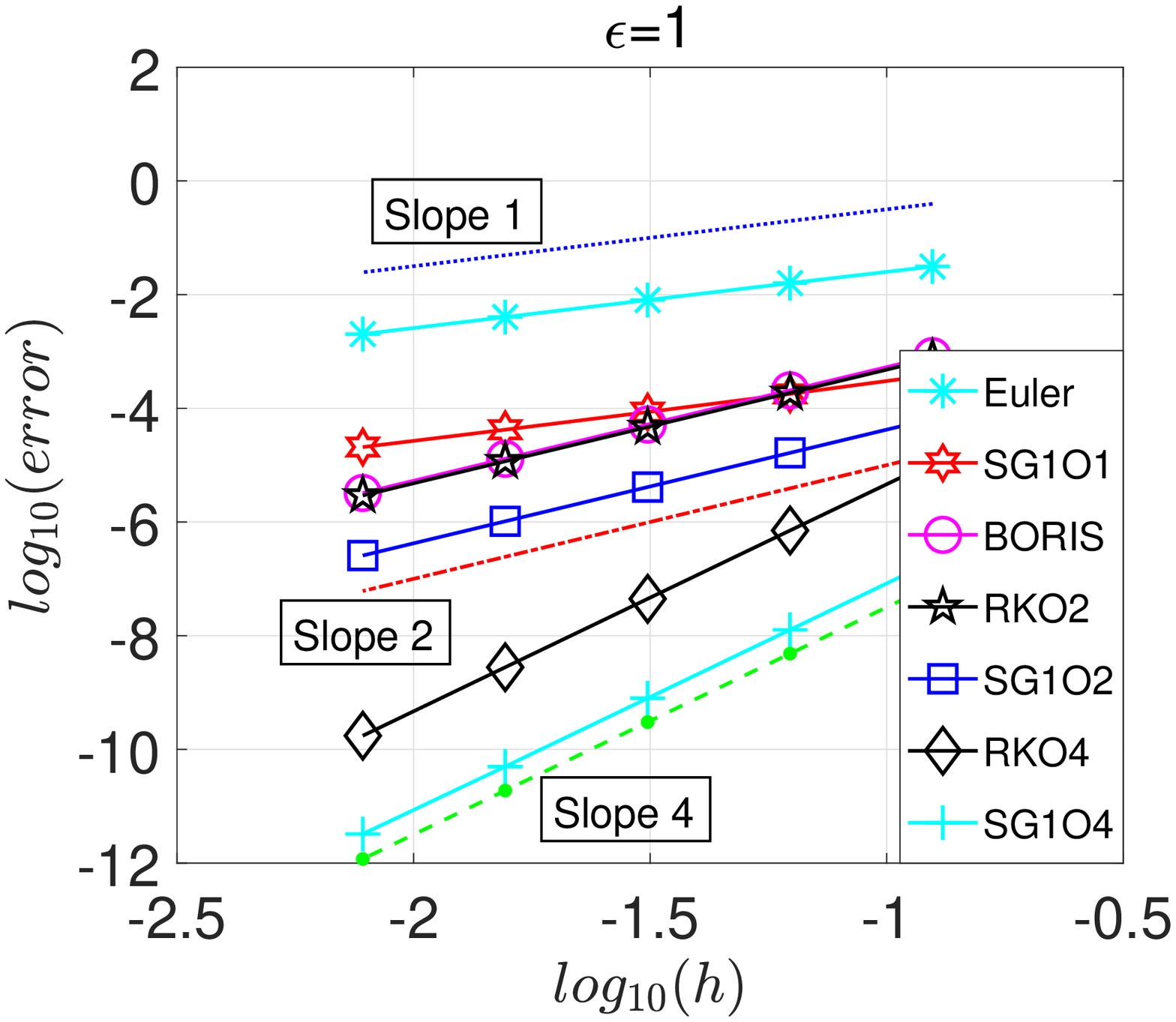} & \includegraphics[width=4.7cm,height=4.5cm]{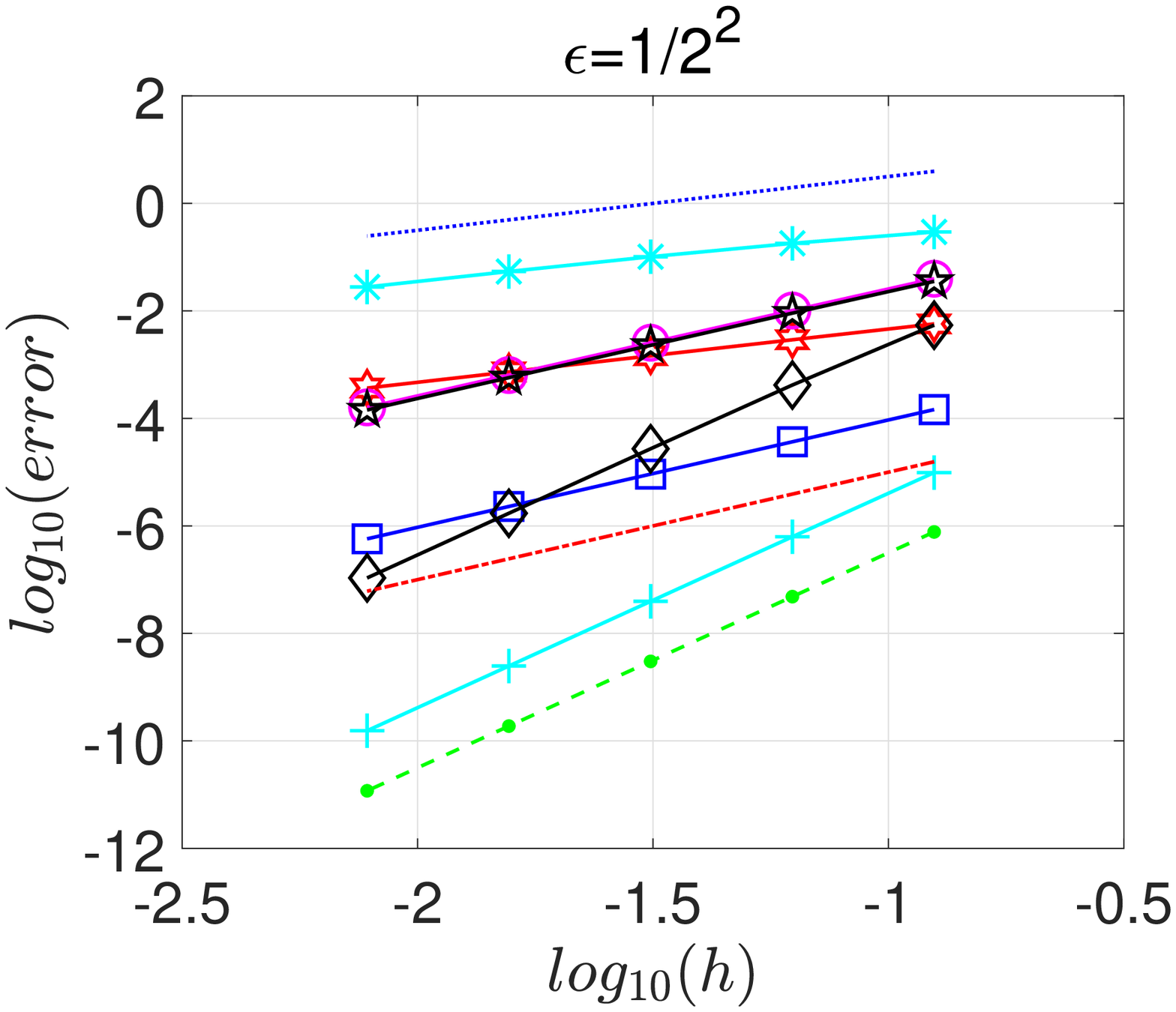}&\includegraphics[width=4.7cm,height=4.5cm]{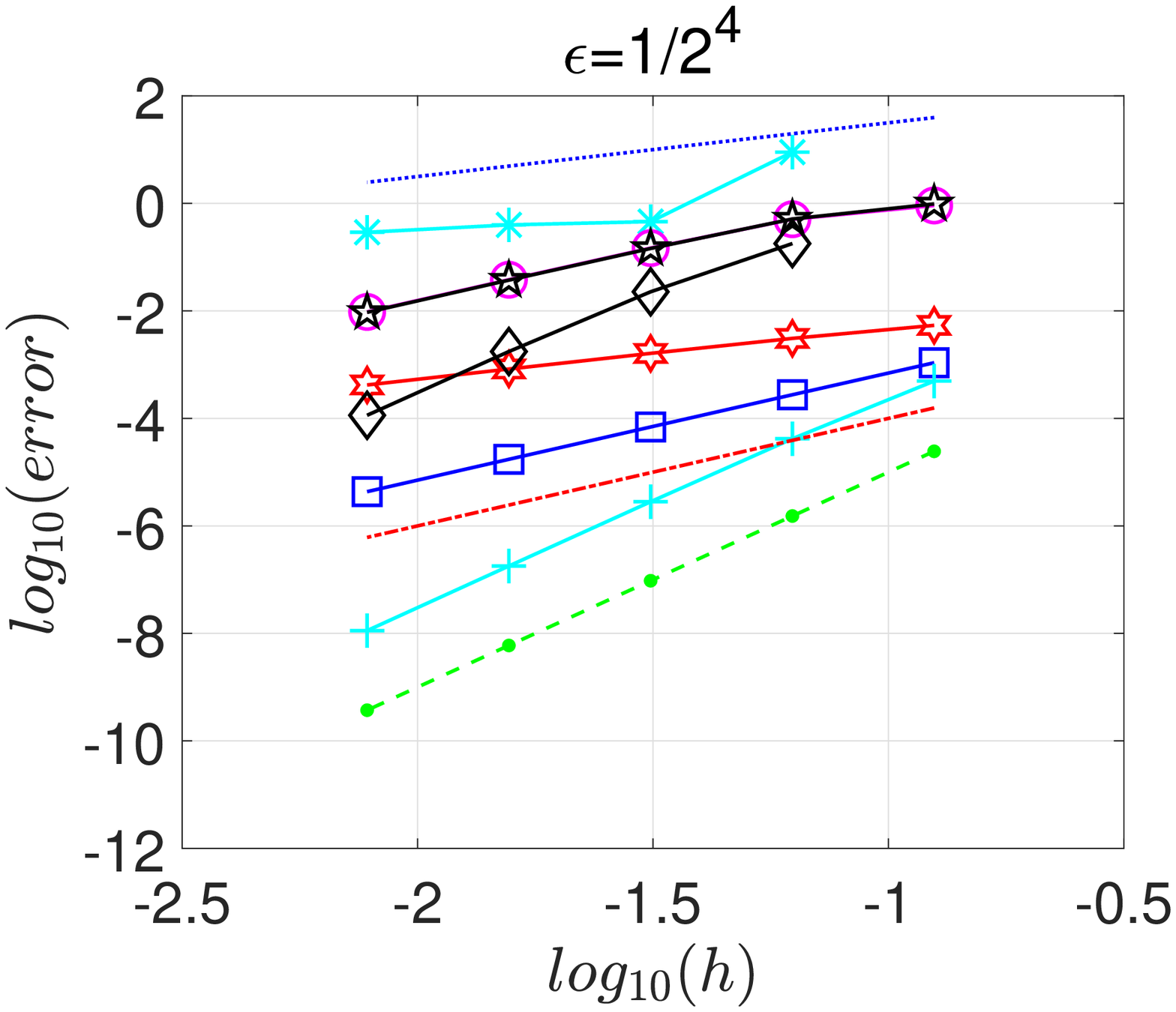}
\end{tabular}
\caption{Problem 2. The global errors $error:=\frac{\norm{x_{n}-x(t_n)}}{\norm{x(t_n)}}+\frac{ \norm{v_{n}-v(t_n)}}{\norm{v(t_n)}}$ with $t=1$ and $h=1/2^{k}$ for $k=3,4,\ldots,7$ under different $\epsilon$.}
\label{fig:problem31}
\end{figure}

\begin{figure}[t!]
\centering\tabcolsep=0.4mm
\begin{tabular}
[c]{cccc}%
 \includegraphics[width=4.7cm,height=4.5cm]{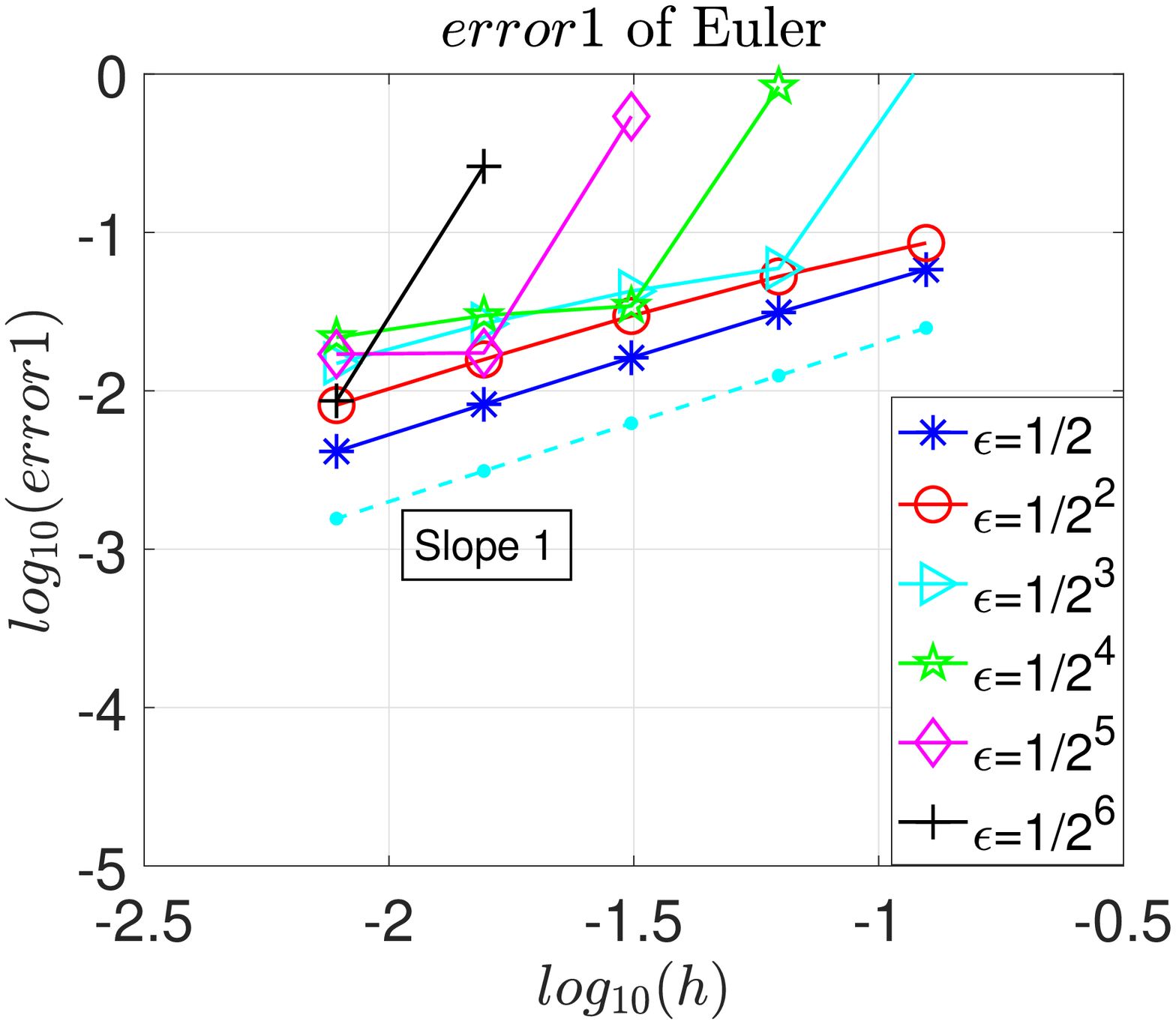} &\includegraphics[width=4.7cm,height=4.5cm]{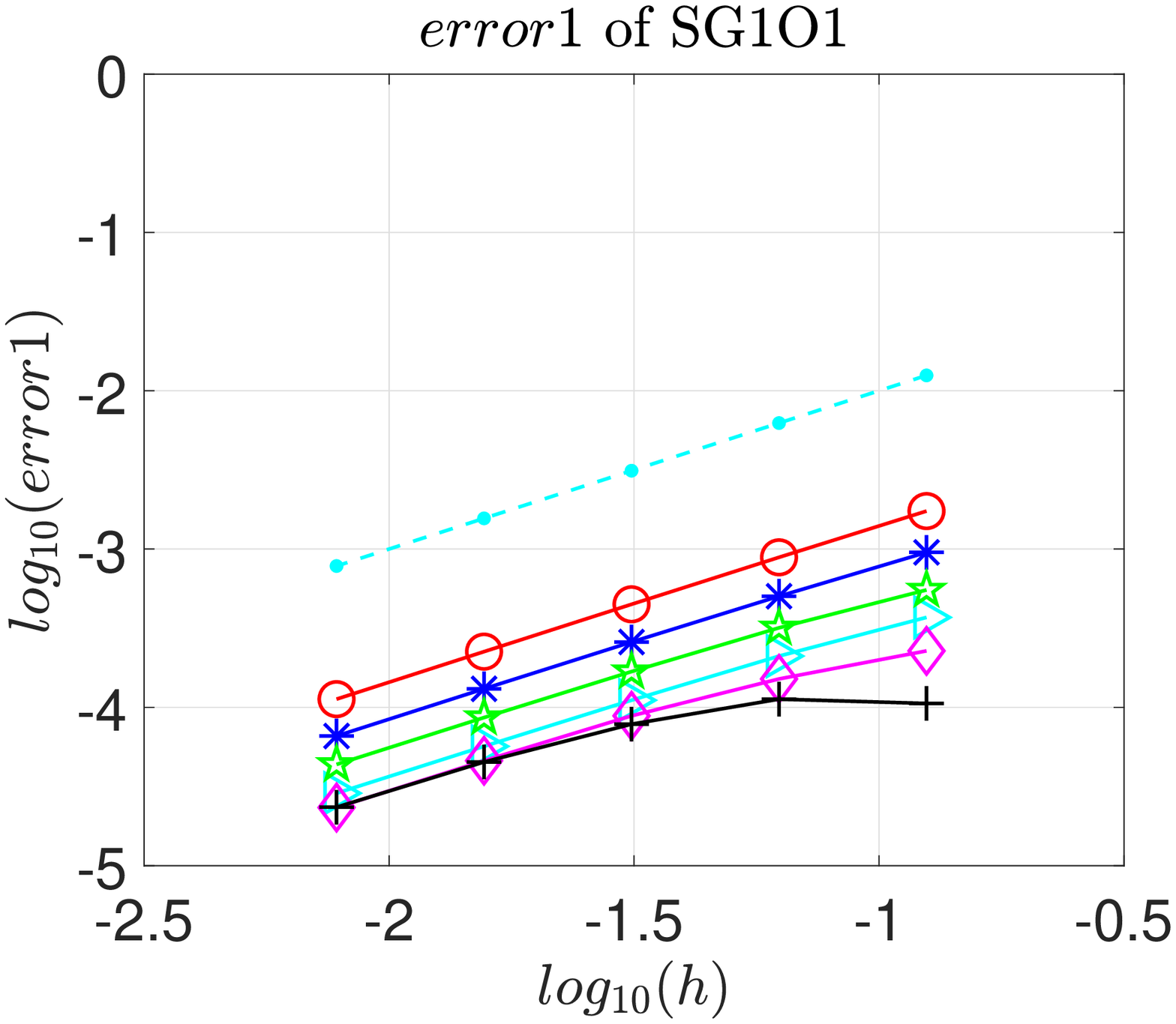}
\end{tabular}
\caption{Problem 2. The errors $error1:=\frac{\norm{x_{n}-x(t_n)}}{\norm{x(t_n)}}+\frac{ \eps \norm{v_{n}-v(t_n)}}{\norm{v(t_n)}}$ of first order methods (Euler and SG1O1) with $t=1$ and $h=1/2^{k}$ for $k=3,4,\ldots,7$ under different $\epsilon$.}
\label{fig:problem31new3}
\end{figure}

\begin{figure}[t!]
\centering\tabcolsep=0.4mm
\begin{tabular}
[c]{ccc}%
\includegraphics[width=4.7cm,height=4.5cm]{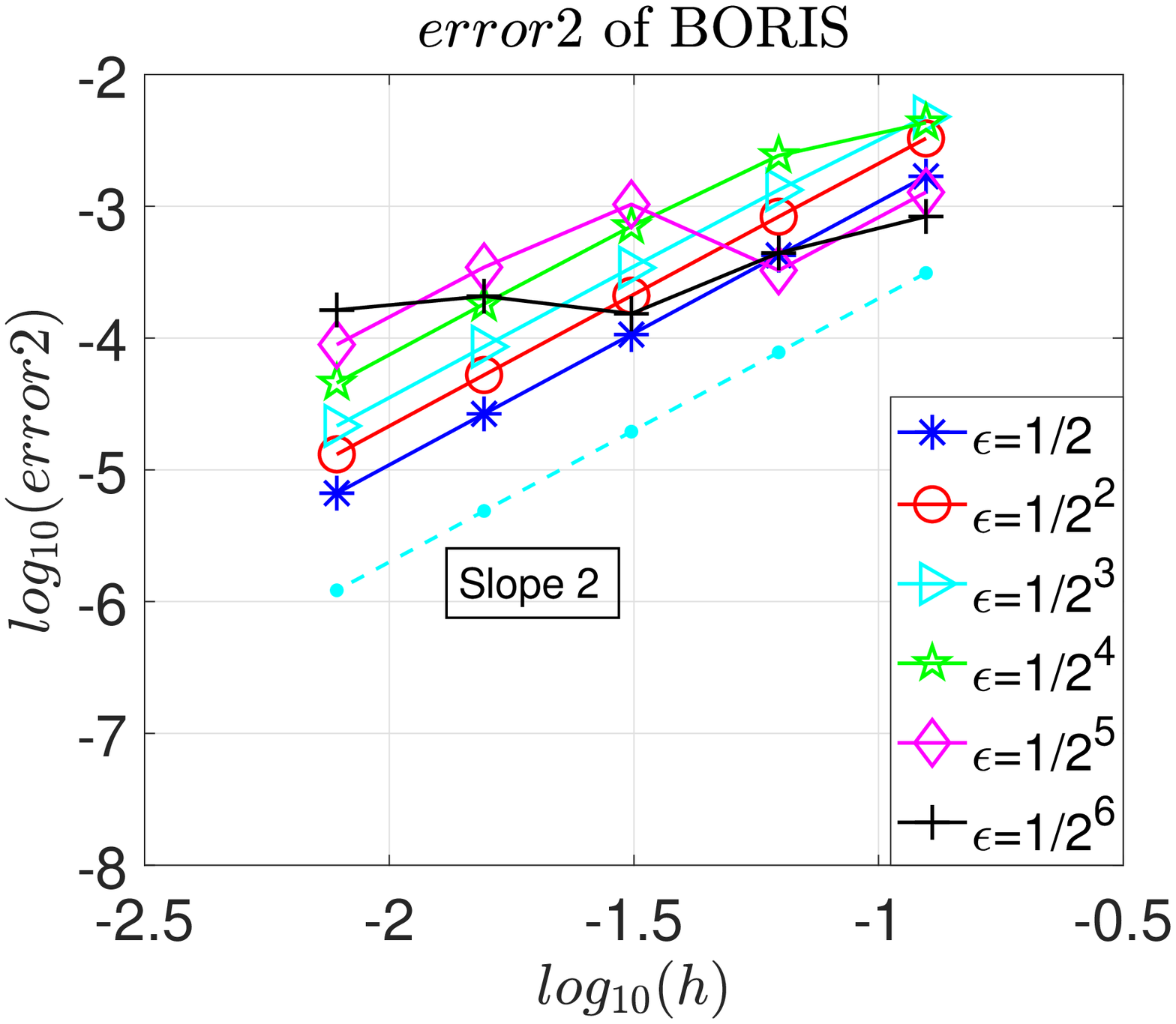} &\includegraphics[width=4.7cm,height=4.5cm]{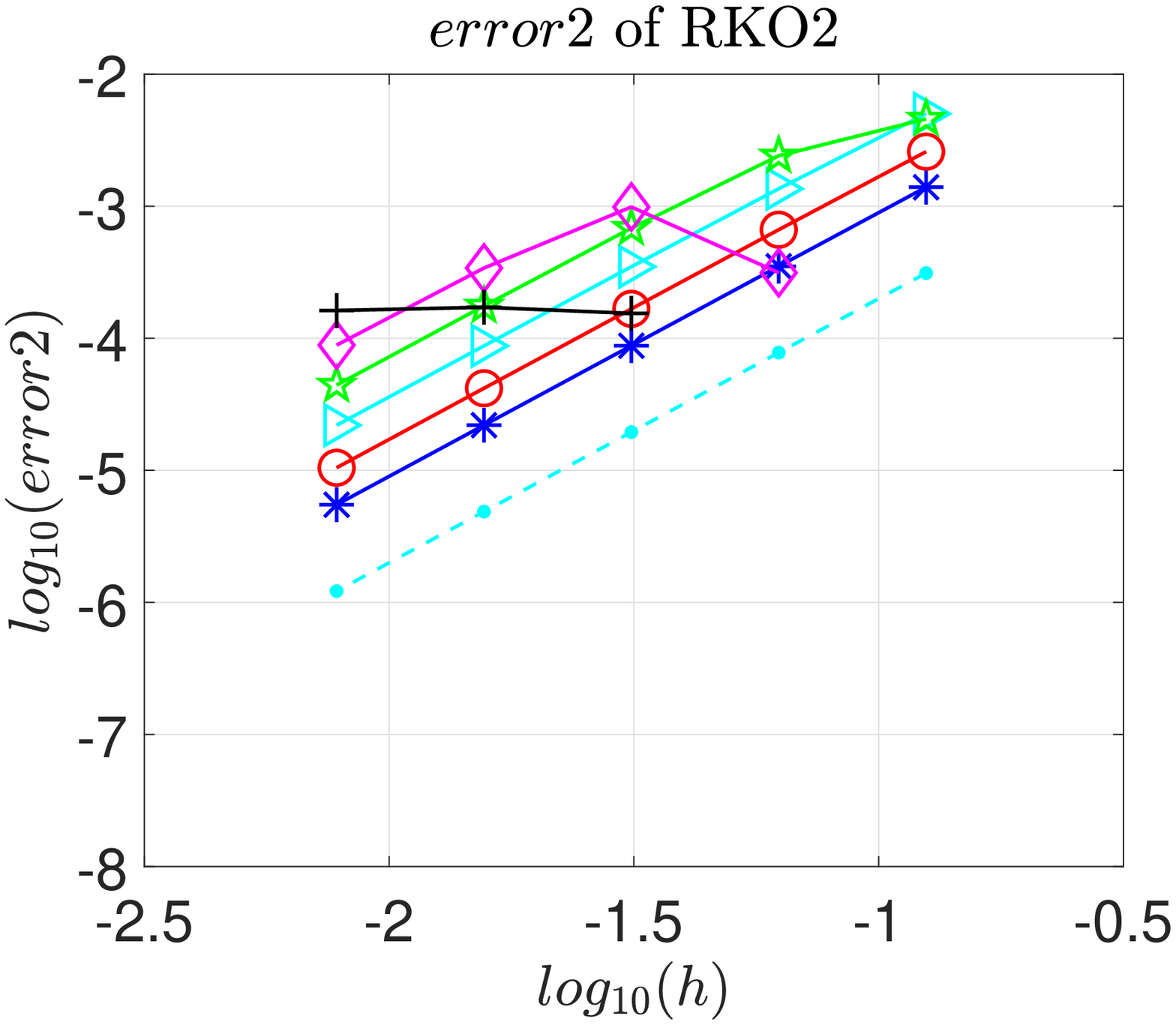} & \includegraphics[width=4.7cm,height=4.5cm]{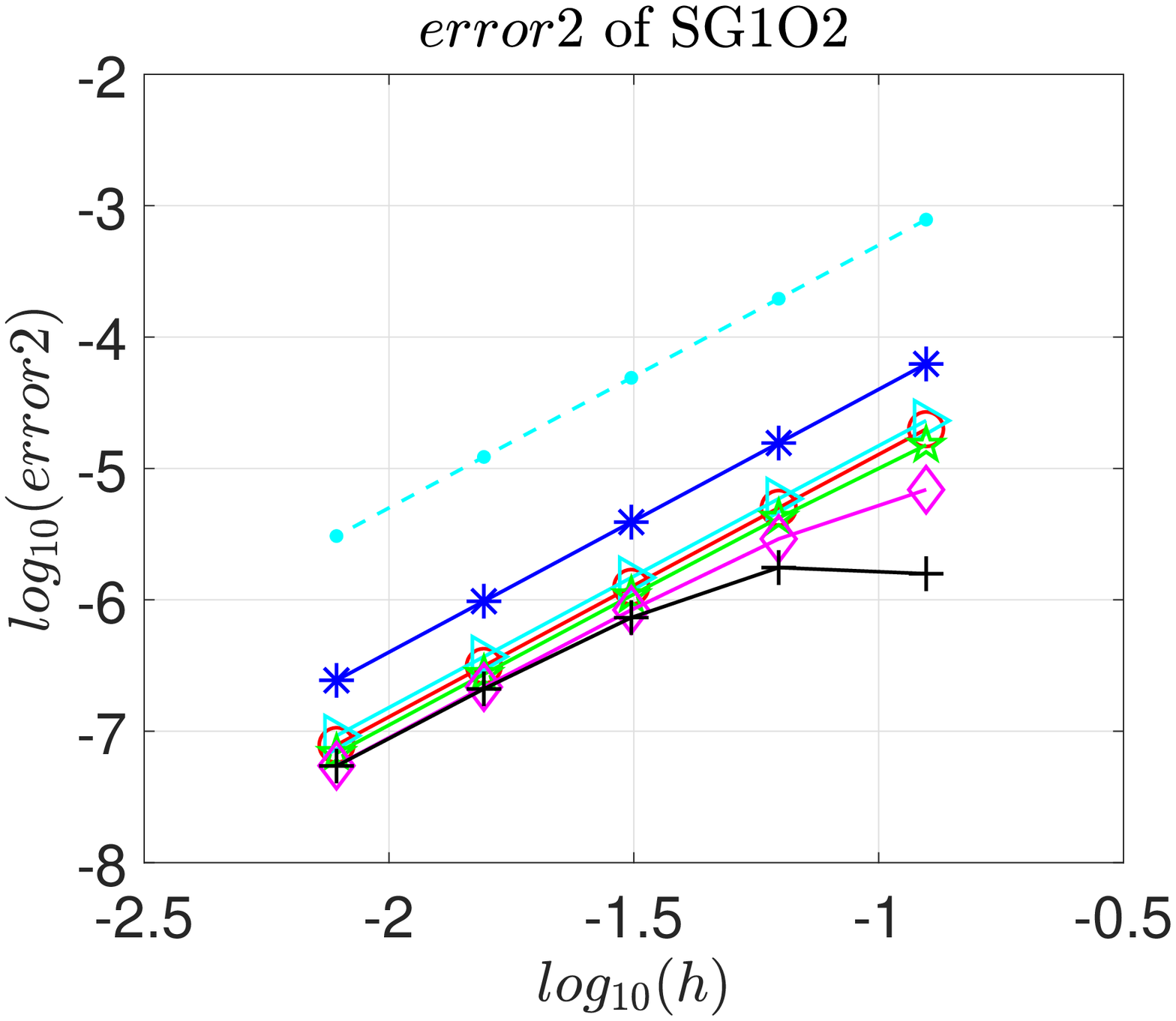}
\end{tabular}
\caption{Problem 2. The errors $error2:=\frac{\eps\norm{x_{n}-x(t_n)}}{\norm{x(t_n)}}+\frac{\eps^2 \norm{v_{n}-v(t_n)}}{\norm{v(t_n)}}$ of second order methods (BORIS, RKO2 and SG1O2) with $t=1$ and $h=1/2^{k}$ for $k=3,4,\ldots,7$ under different $\epsilon$. }
\label{fig:problem31new1}
\end{figure}

\begin{figure}[t!]
\centering\tabcolsep=0.4mm
\begin{tabular}
[c]{cccc}%
 \includegraphics[width=4.7cm,height=4.5cm]{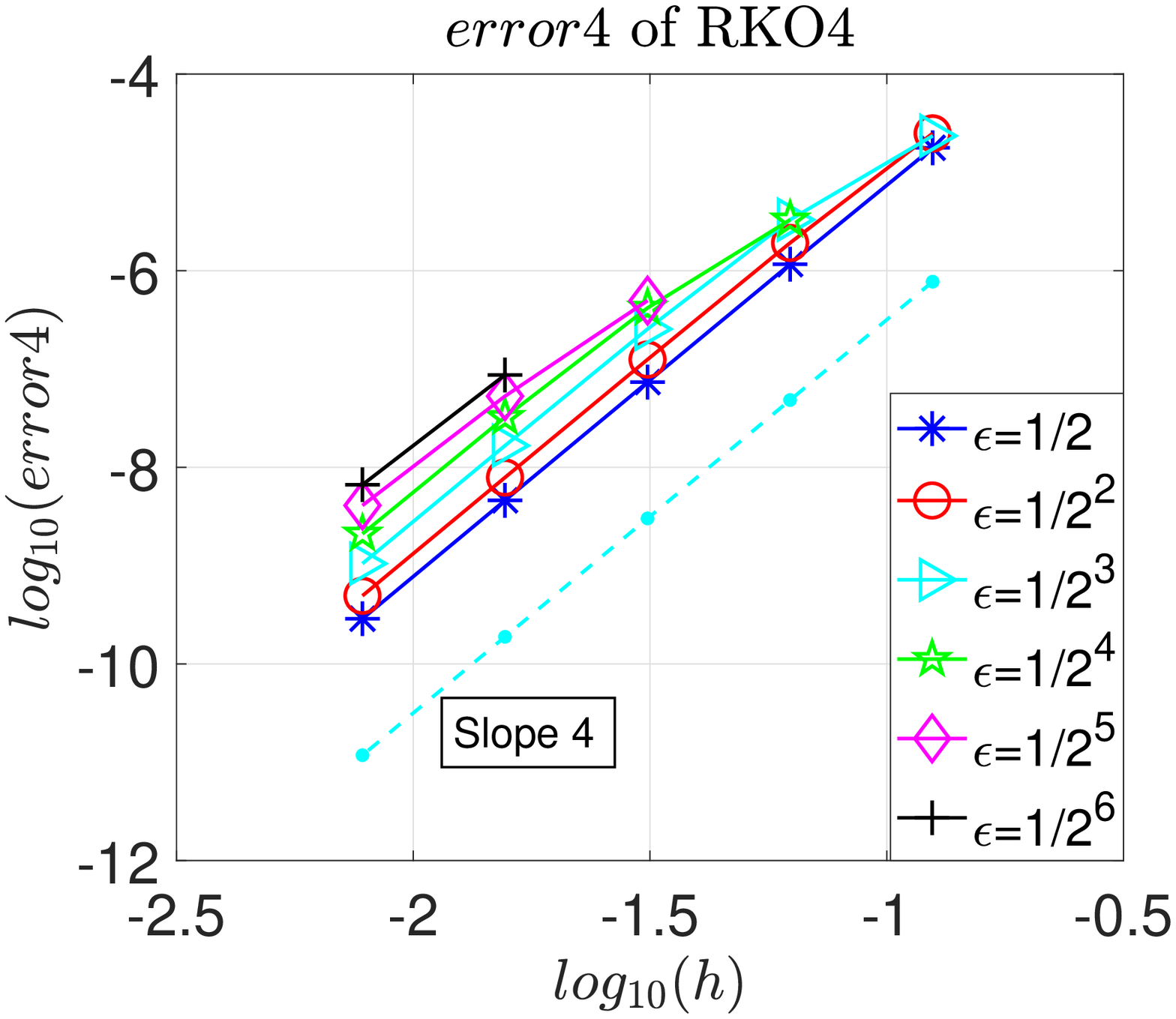} &\includegraphics[width=4.7cm,height=4.5cm]{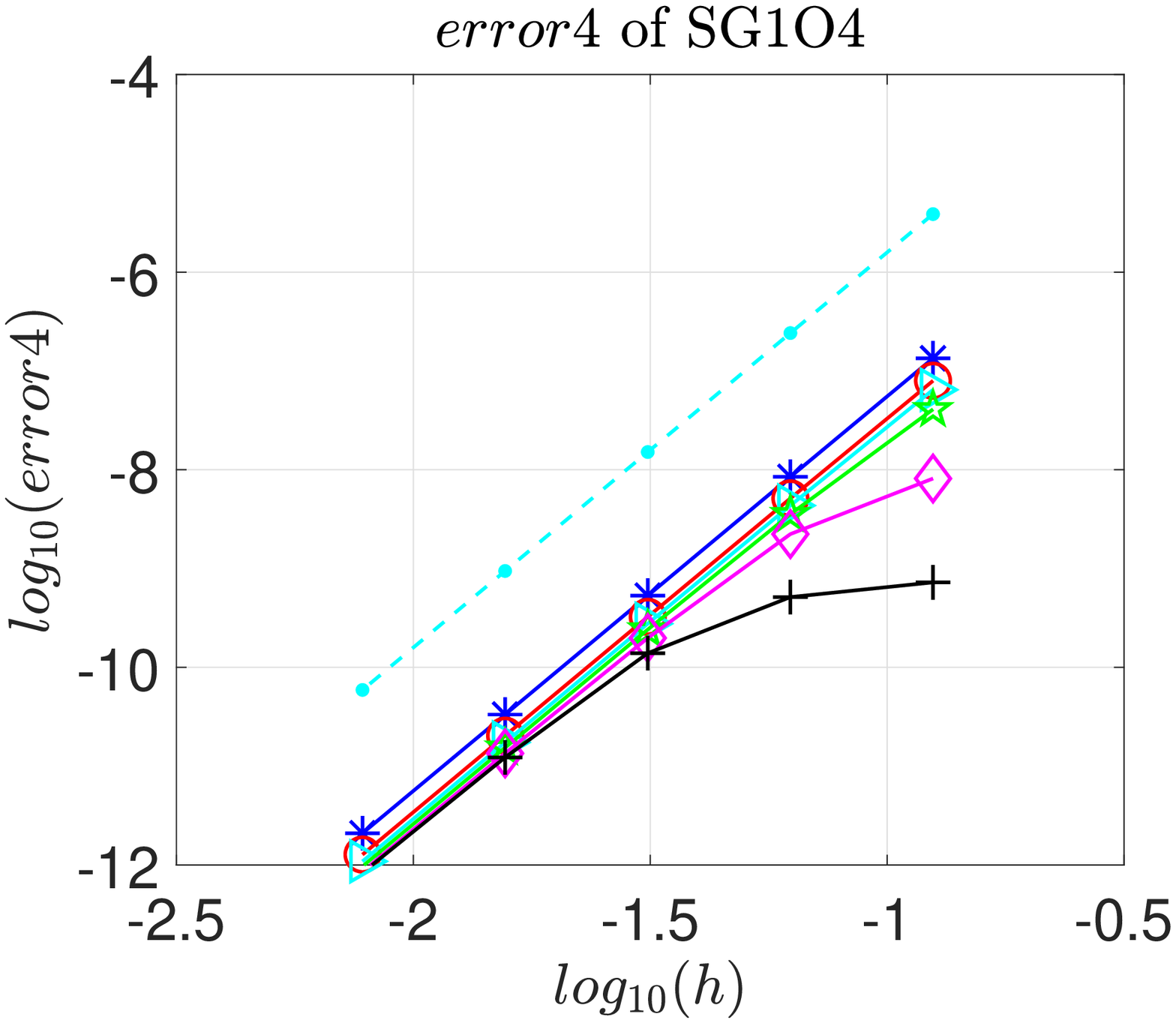}
\end{tabular}
\caption{Problem 2. The errors $error4:=\frac{\eps^3\norm{x_{n}-x(t_n)}}{\norm{x(t_n)}}+\frac{\eps^4 \norm{v_{n}-v(t_n)}}{\norm{v(t_n)}}$ of fourth order methods (RKO4 and SG1O4) with $t=1$ and $h=1/2^{k}$ for $k=3,4,\ldots,7$ under different $\epsilon$. }
\label{fig:problem31new2}
\end{figure}

\begin{figure}[t!]
\centering\tabcolsep=0.4mm
\begin{tabular}
[c]{ccc}%
\includegraphics[width=4.7cm,height=4.5cm]{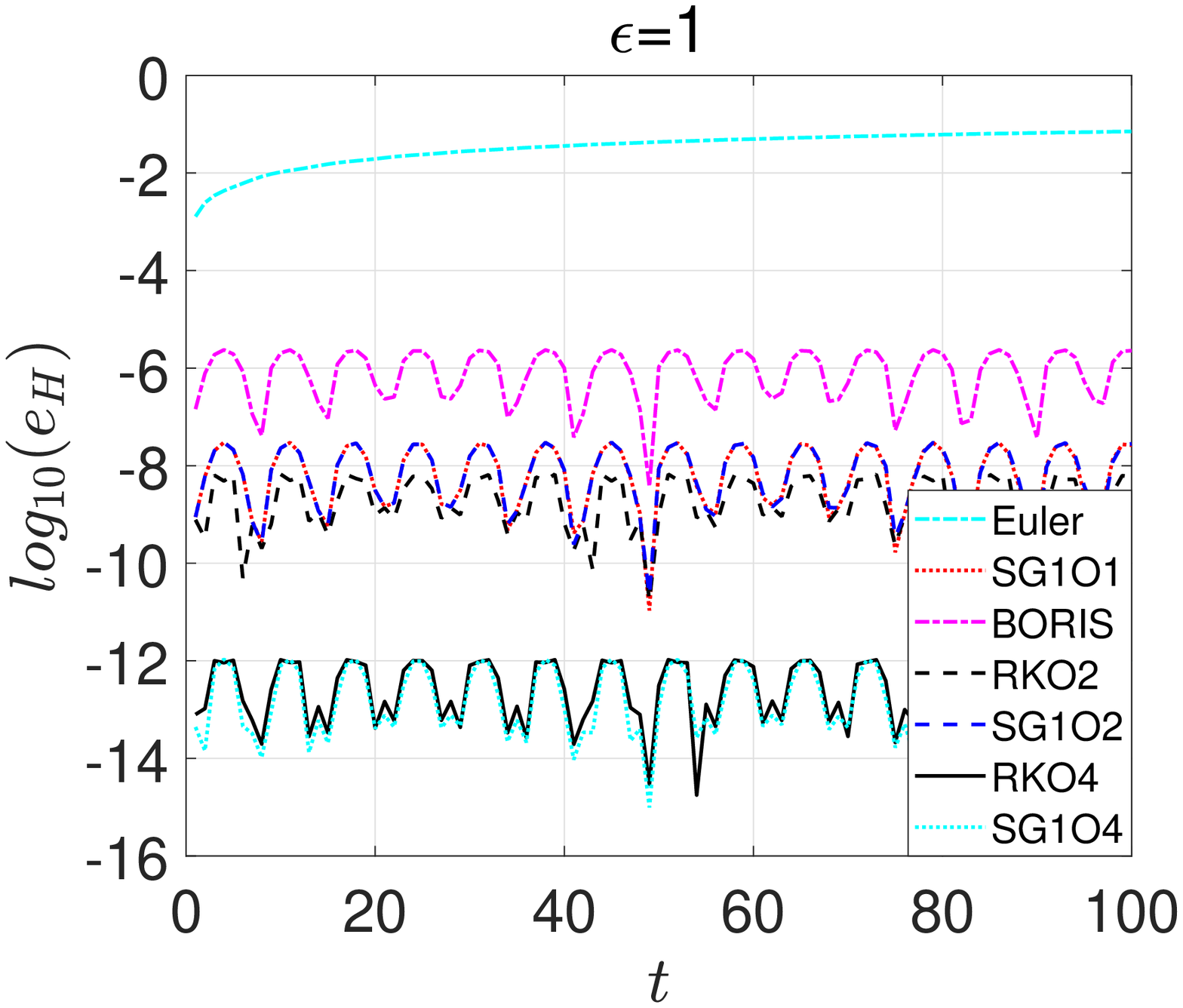} & \includegraphics[width=4.7cm,height=4.5cm]{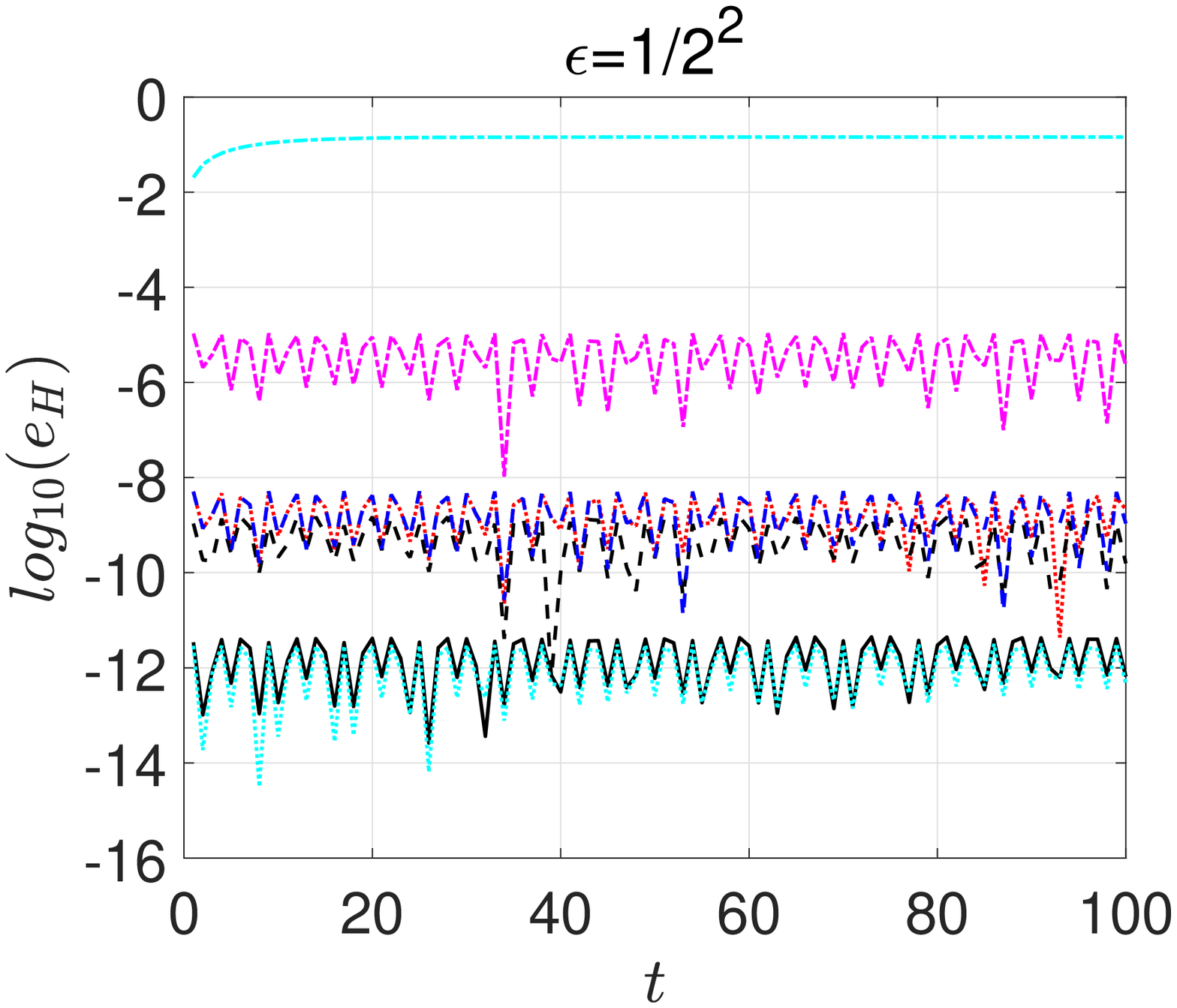} & \includegraphics[width=4.7cm,height=4.5cm]{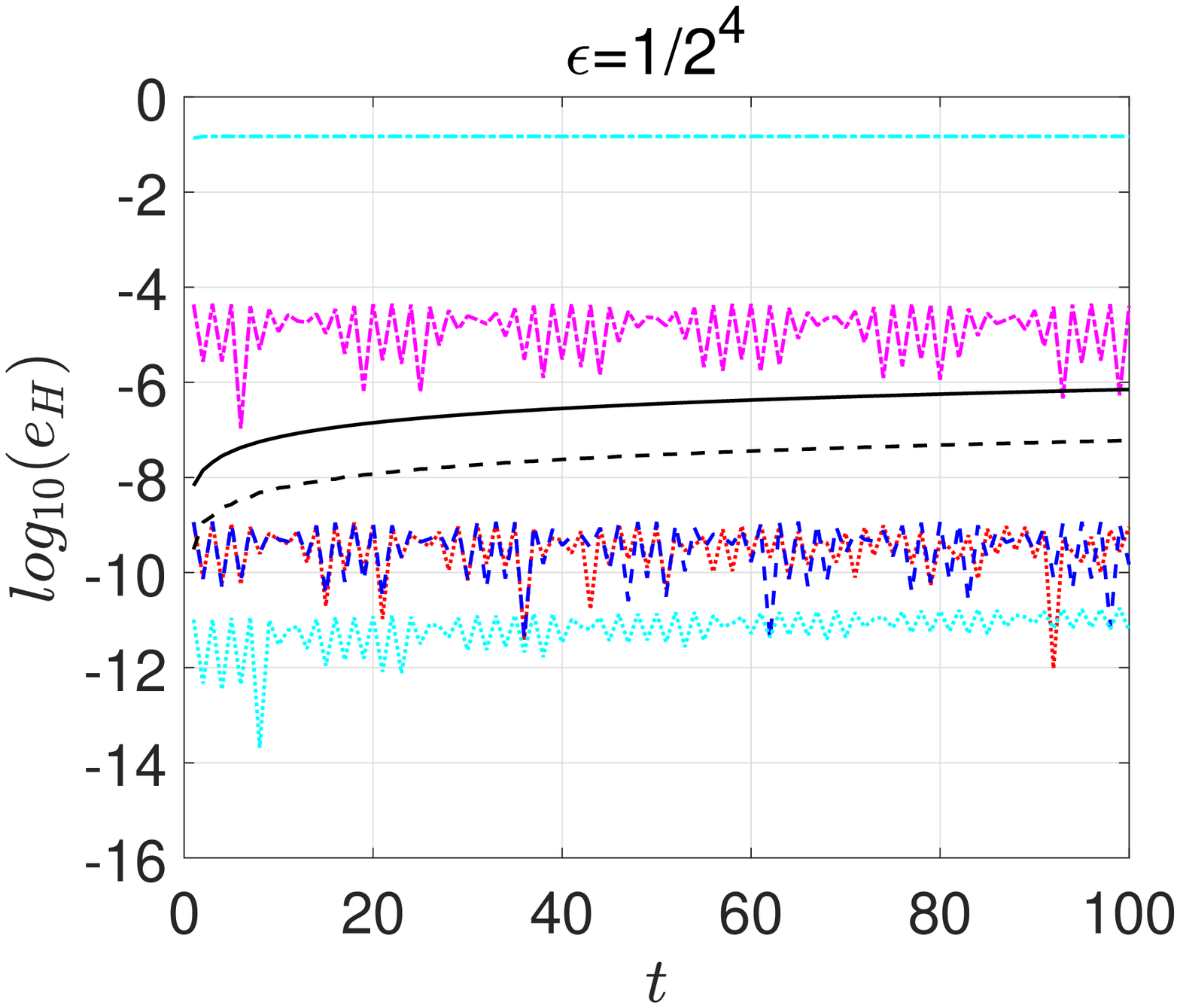}
\end{tabular}
\caption{Problem 2.  Evolution of the energy error $e_{H}:=\frac{|H(x_{n},v_n)-H(x_0,v_0)|}{|H(x_0,v_0)|}$ as function of time  $t=nh$.}
\label{fig:problem32}
\end{figure}


\noindent\vskip3mm \noindent\textbf{Problem 3. (Maximal ordering scaling)}
Consider  the charged-particle dynamics \eqref{charged-particle sts-cons} from \cite{WZ} in a magnetic field with the maximal ordering scaling
  $\frac{1}{\epsilon}B(\epsilon x)=\frac{1}{\epsilon}\left(
                   \begin{array}{c}
                     \cos(\epsilon x_2) \\
                      1+\sin(\epsilon x_3) \\
                    \cos(\epsilon x_1) \\
                   \end{array}
                 \right)+\left(
                   \begin{array}{c}
                     -x_1 \\
                      0 \\
                    x_3 \\
                   \end{array}
                 \right)$
 and  the scalar potential $U(x)=\frac{1}{\sqrt{x_{1}^2+x_{2}^2}}.$
  We take $x(0)=(1/3, 1/4, 1/2)^{\intercal}, \ v(0)=(2/5, 2/3, 1)^{\intercal}$ as the initial values.
 {Figs. \ref{fig:problem41}--\ref{fig:problem41new2} present the errors of all the methods} when solving this problem  on $[0,1]$ with different   $\epsilon$  and $h=1/2^{k}$, where $k=3,4,\ldots,7$. Finally,  the energy errors are shown in Fig.
\ref{fig:problem42} on  $[0,100]$   with $h=\frac{1}{20}$.

\begin{figure}[t!]
\centering\tabcolsep=0.4mm
\begin{tabular}
[c]{ccc}%
\includegraphics[width=4.7cm,height=4.5cm]{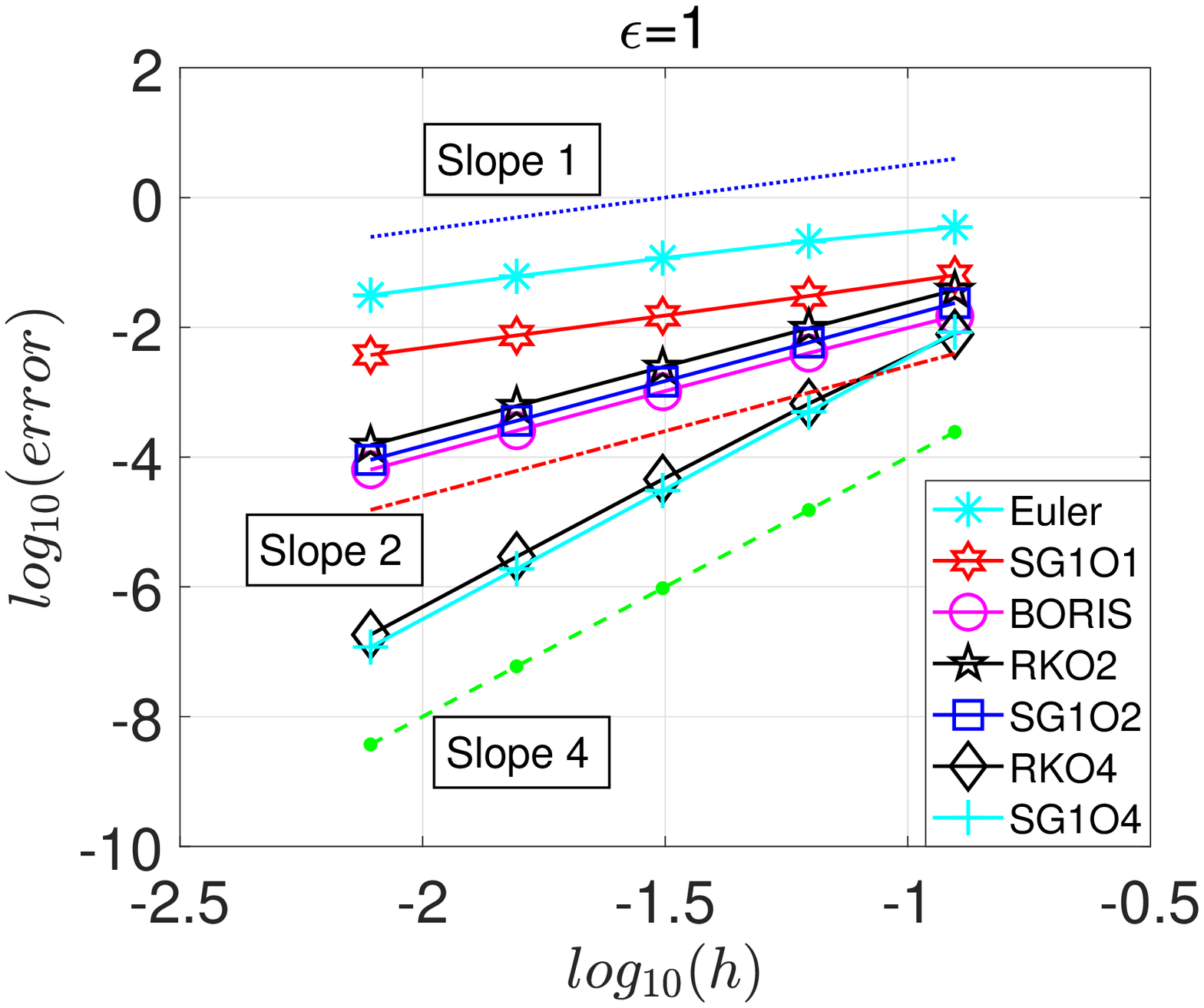} & \includegraphics[width=4.7cm,height=4.5cm]{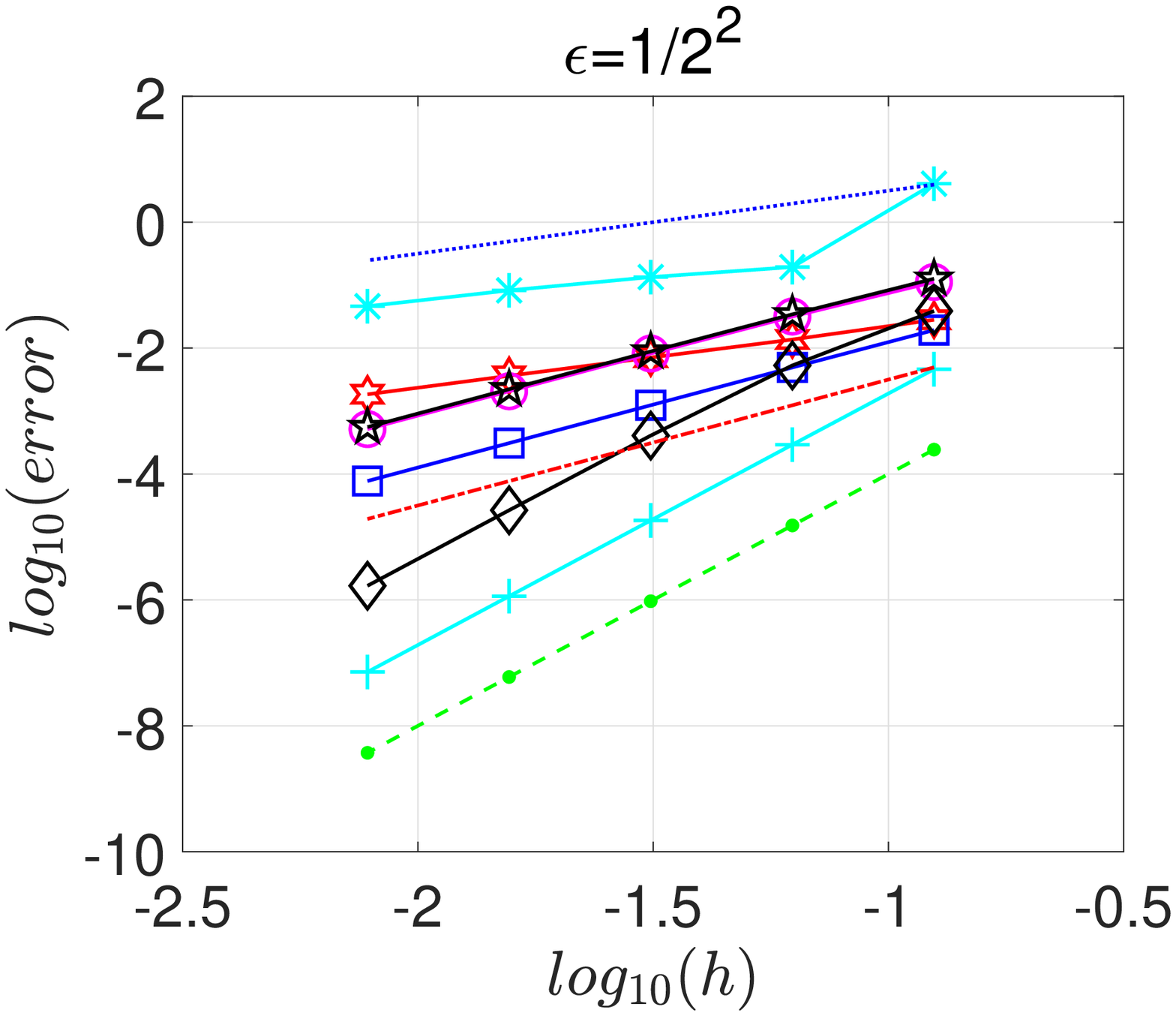} & \includegraphics[width=4.7cm,height=4.5cm]{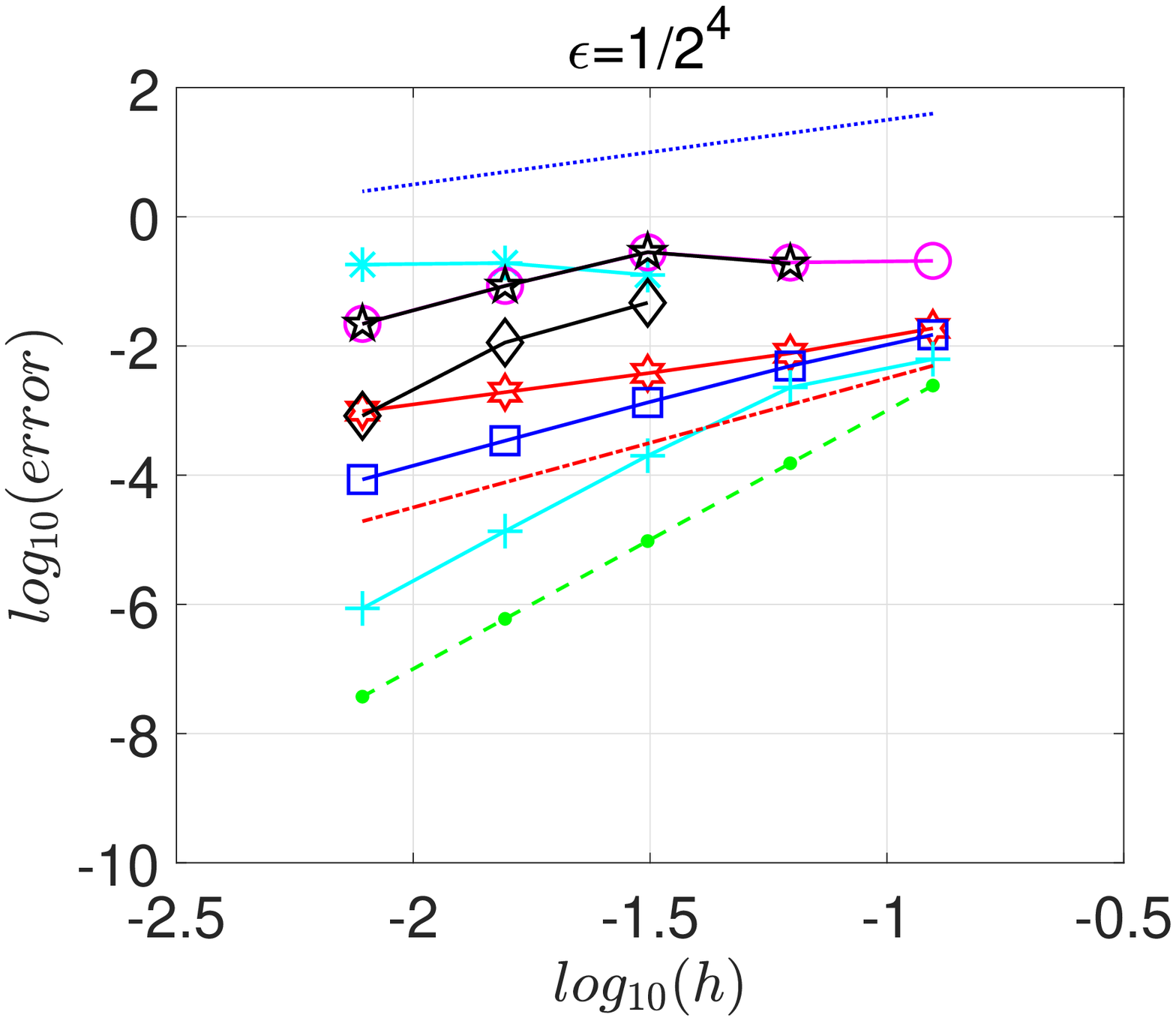}
\end{tabular}
\caption{Problem 3.  The global errors $error:=\frac{\norm{x_{n}-x(t_n)}}{\norm{x(t_n)}}+\frac{ \norm{v_{n}-v(t_n)}}{\norm{v(t_n)}}$ with $t=1$ and $h=1/2^{k}$ for $k=3,4,\ldots,7$ under different $\epsilon$. }
\label{fig:problem41}
\end{figure}

\begin{figure}[t!]
\centering\tabcolsep=0.4mm
\begin{tabular}
[c]{cccc}%
 \includegraphics[width=4.7cm,height=4.5cm]{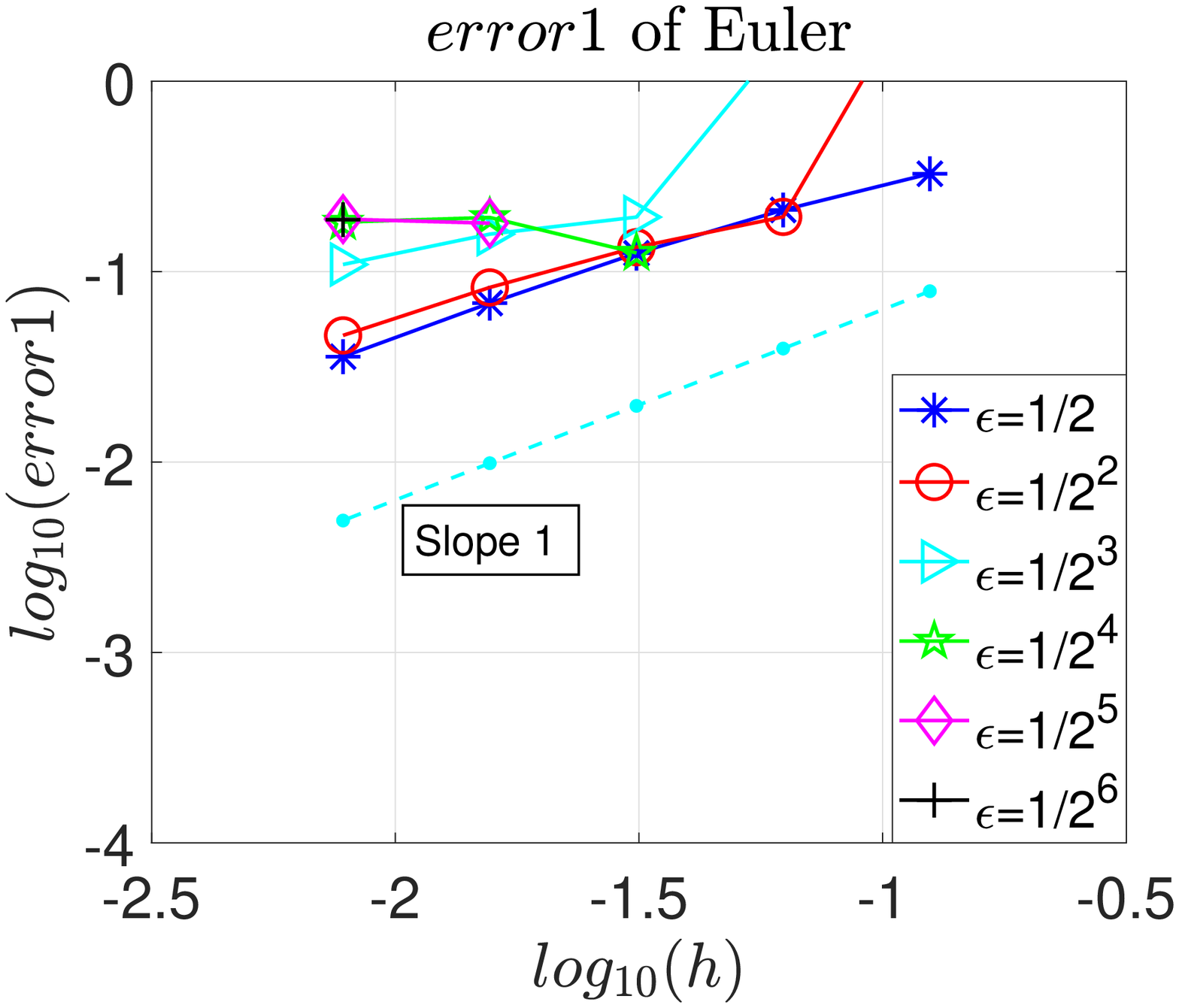} &\includegraphics[width=4.7cm,height=4.5cm]{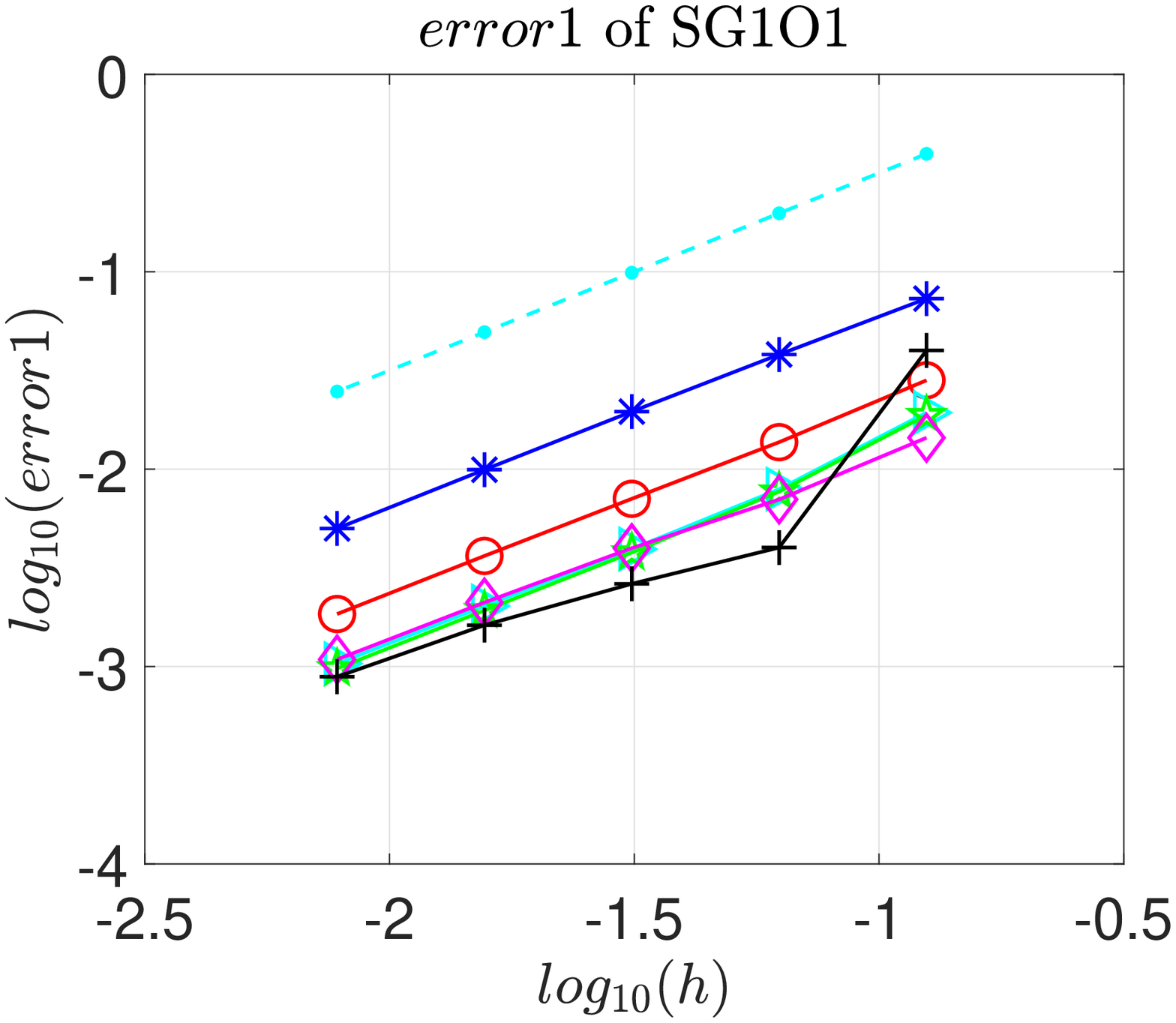}
\end{tabular}
\caption{Problem 3. The errors $error1:=\frac{\norm{x_{n}-x(t_n)}}{\norm{x(t_n)}}+\frac{ \norm{v_{n}-v(t_n)}}{\norm{v(t_n)}}$  of first order methods (Euler and SG1O1) with $t=1$ and $h=1/2^{k}$ for $k=3,4,\ldots,7$ under different $\epsilon$.}
\label{fig:problem41new3}
\end{figure}

\begin{figure}[t!]
\centering\tabcolsep=0.4mm
\begin{tabular}
[c]{ccc}%
\includegraphics[width=4.7cm,height=4.5cm]{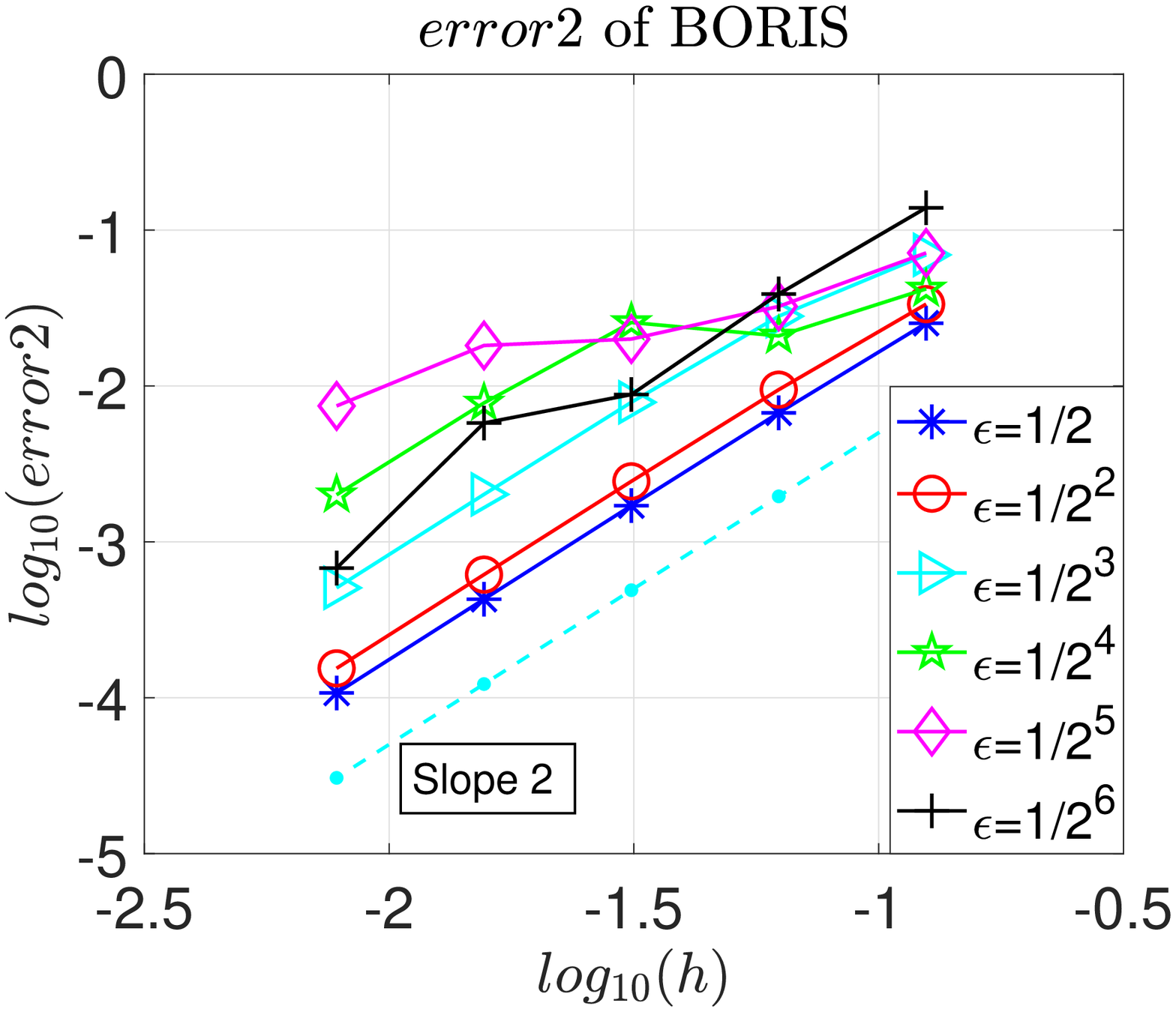} &\includegraphics[width=4.7cm,height=4.5cm]{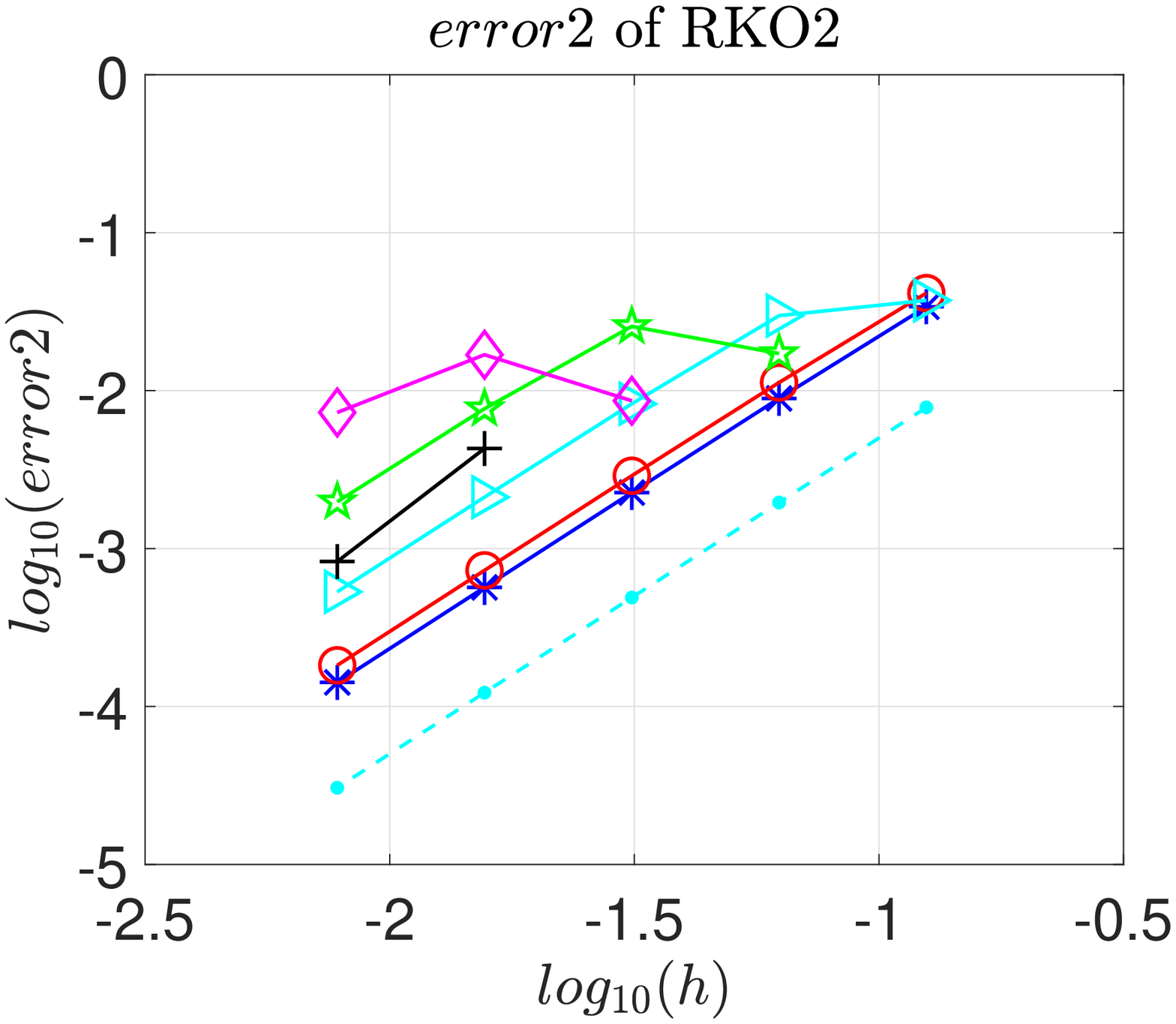} & \includegraphics[width=4.7cm,height=4.5cm]{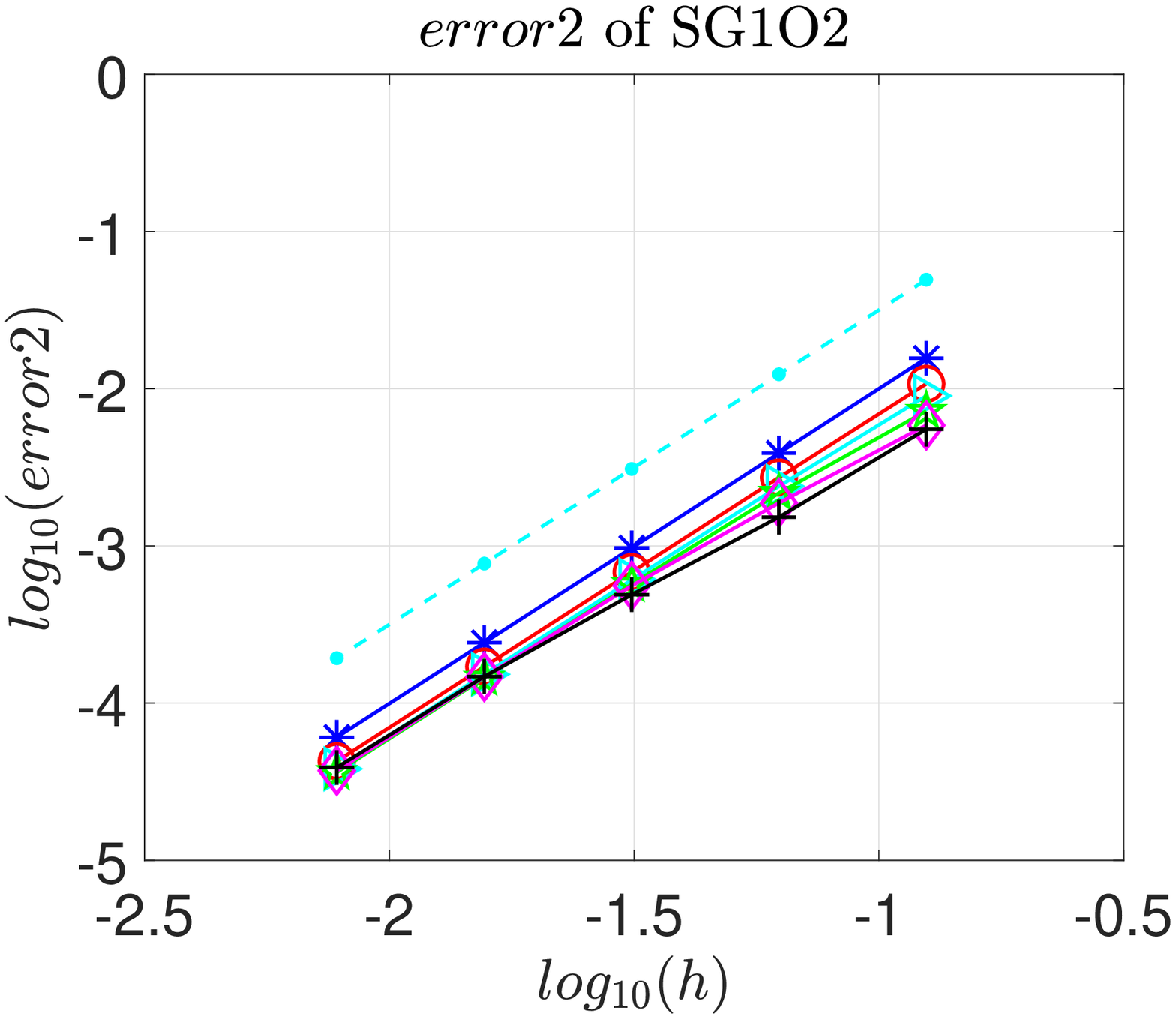}
\end{tabular}
\caption{Problem 3. The errors $error2:=\frac{\norm{x_{n}-x(t_n)}}{\norm{x(t_n)}}+\frac{\eps \norm{v_{n}-v(t_n)}}{\norm{v(t_n)}}$ of second order methods (BORIS, RKO2 and SG1O2) with $t=1$ and $h=1/2^{k}$ for $k=3,4,\ldots,7$ under different $\epsilon$. }
\label{fig:problem41new1}
\end{figure}

\begin{figure}[t!]
\centering\tabcolsep=0.4mm
\begin{tabular}
[c]{cccc}%
 \includegraphics[width=4.7cm,height=4.5cm]{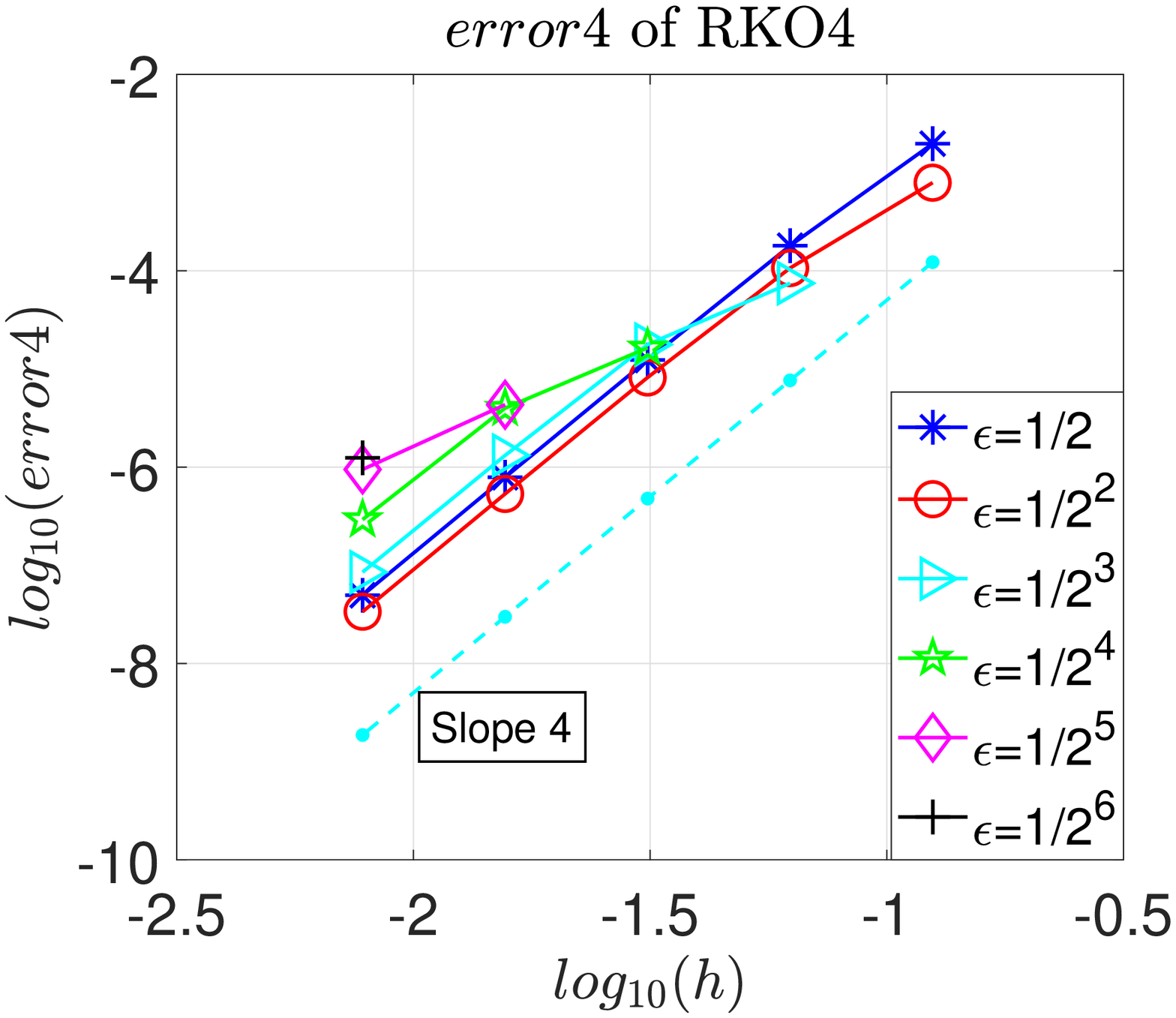} &\includegraphics[width=4.7cm,height=4.5cm]{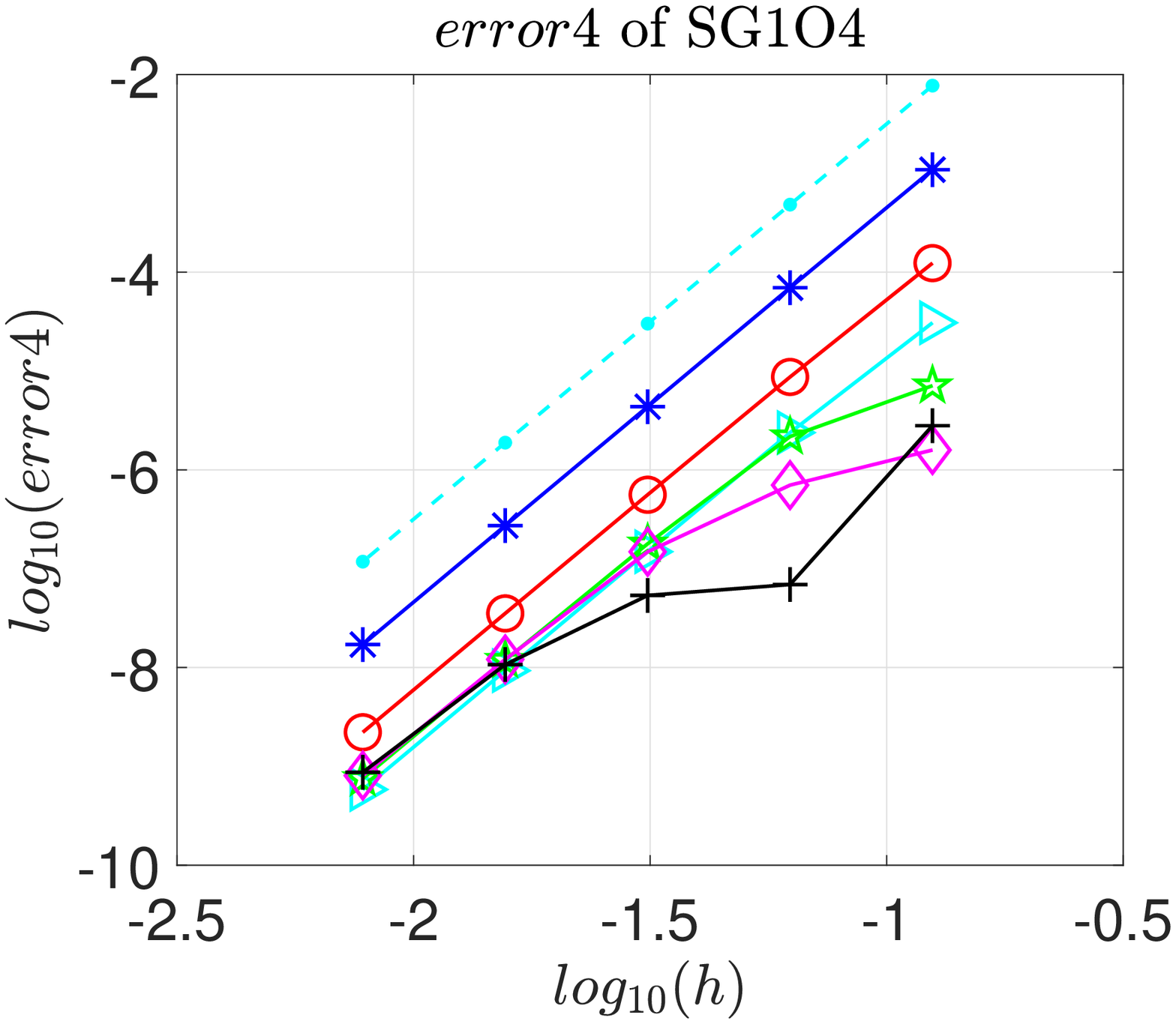}
\end{tabular}
\caption{Problem 3. The errors $error4:=\frac{\eps^2\norm{x_{n}-x(t_n)}}{\norm{x(t_n)}}+\frac{\eps^3 \norm{v_{n}-v(t_n)}}{\norm{v(t_n)}}$ of fourth order methods (RKO4 and SG1O4) with $t=1$ and $h=1/2^{k}$ for $k=3,4,\ldots,7$ under different $\epsilon$.}
\label{fig:problem41new2}
\end{figure}

\begin{figure}[t!]
\centering\tabcolsep=0.4mm
\begin{tabular}
[c]{ccc}%
\includegraphics[width=4.7cm,height=4.5cm]{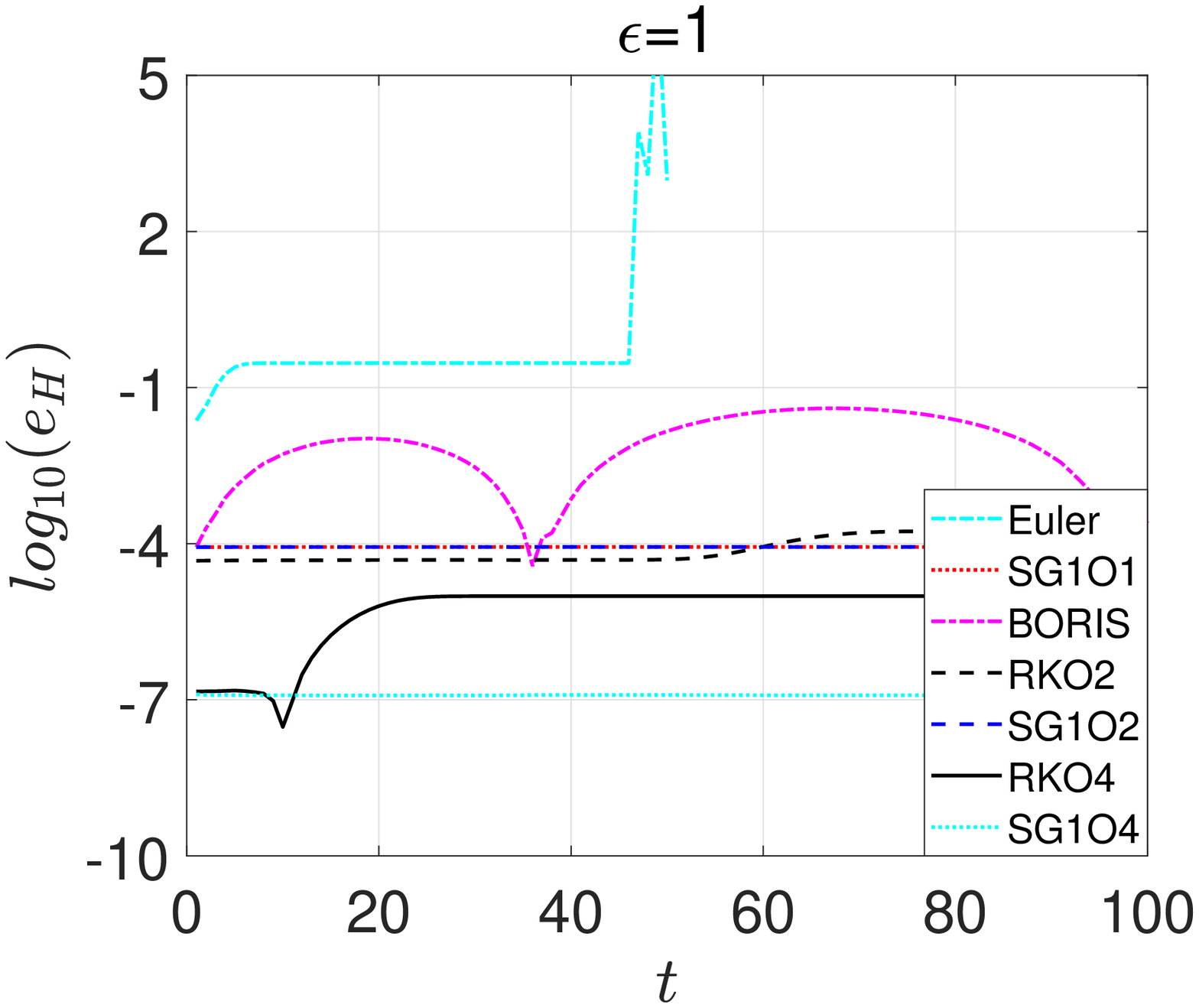} & \includegraphics[width=4.7cm,height=4.5cm]{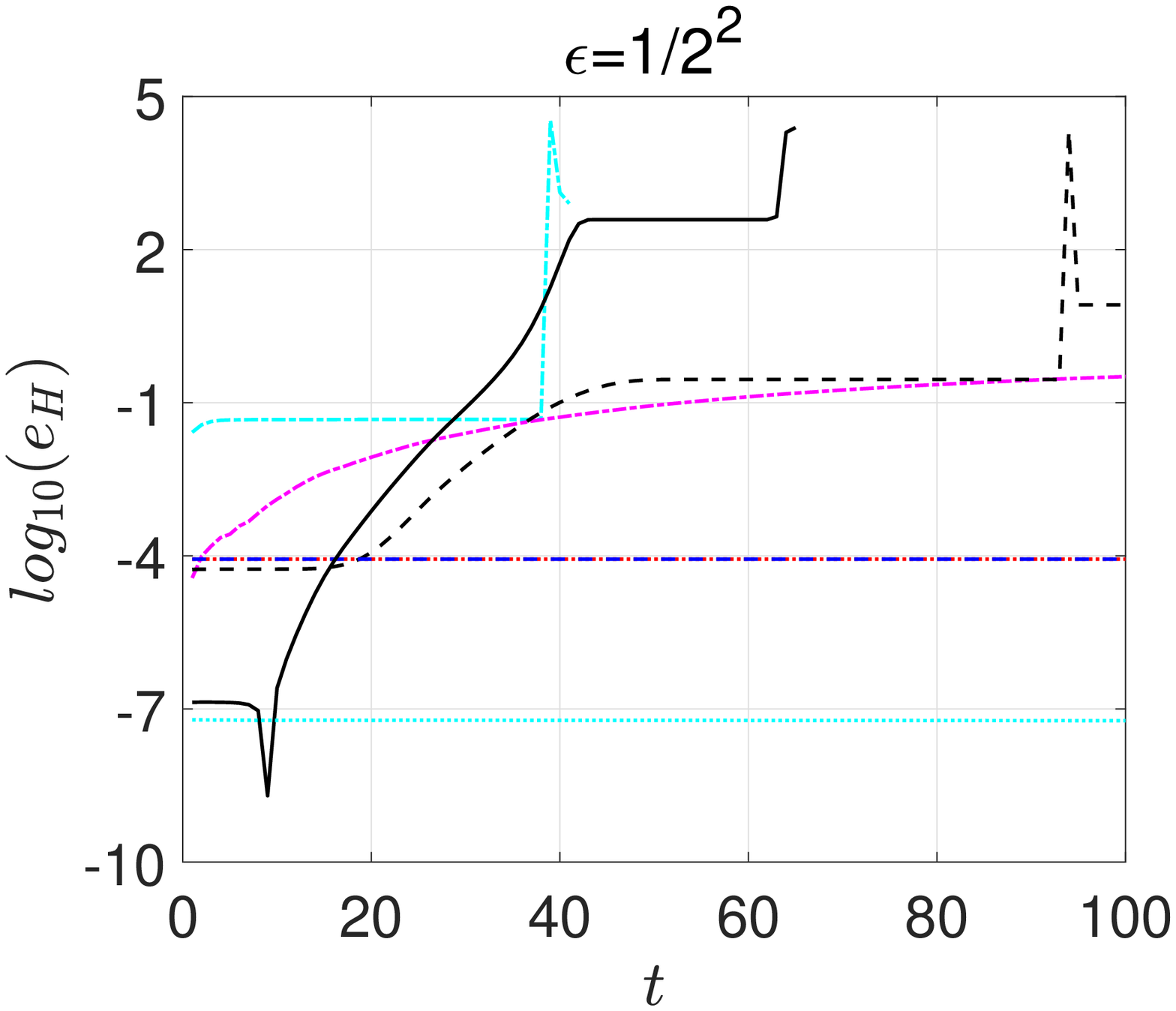} & \includegraphics[width=4.7cm,height=4.5cm]{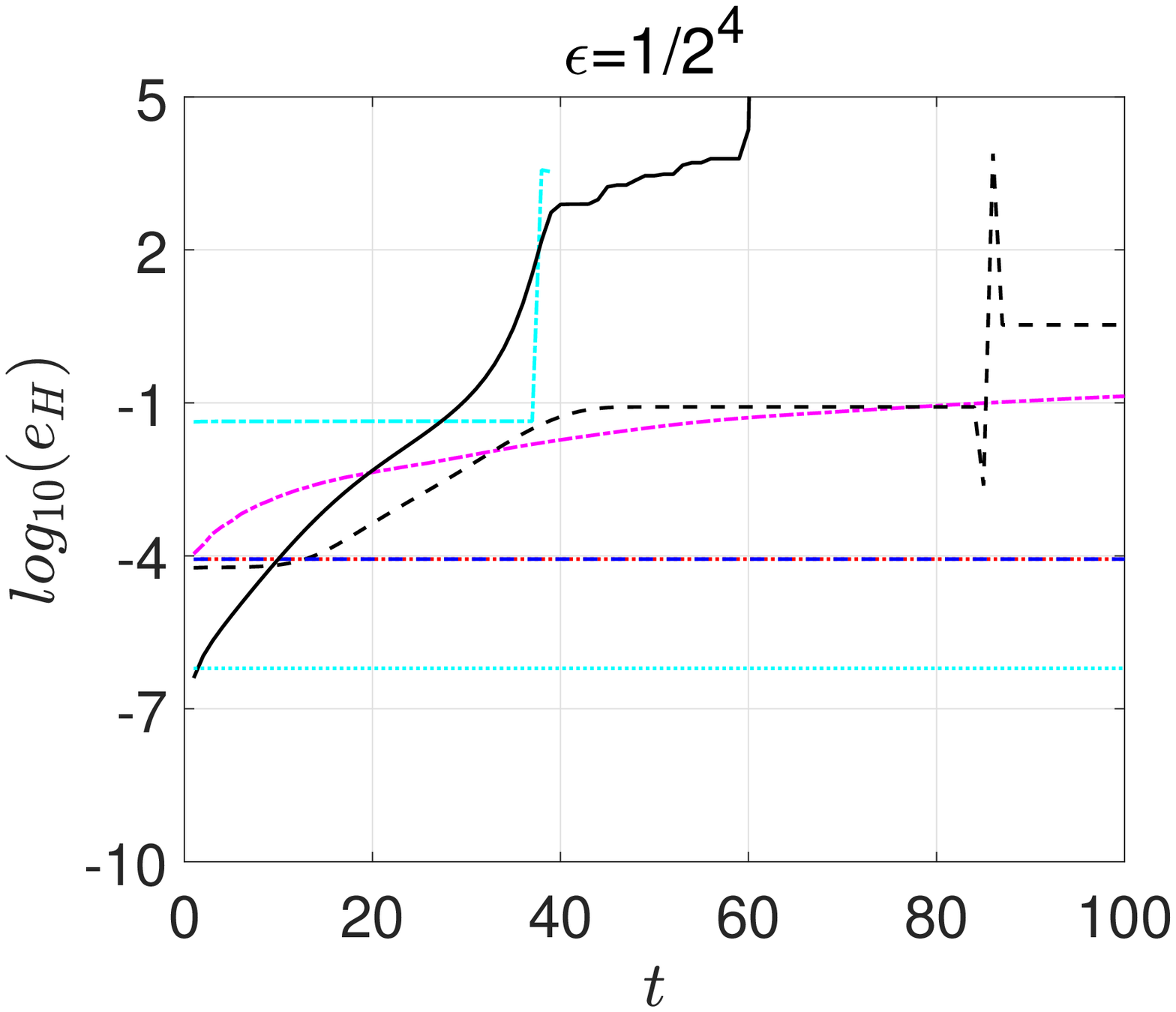}
\end{tabular}
\caption{Problem 3. Evolution of the energy error $e_{H}:=\frac{|H(x_{n},v_n)-H(x_0,v_0)|}{|H(x_0,v_0)|}$ as function of time  $t=nh$.}
\label{fig:problem42}
\end{figure}
Based on the above results, we can draw the following observations. For the accuracy, our methods agree with the results presented in Theorem \ref{order condition-B(x)} and  \ref{order conditionN} and are more accurate than the Euler, Boris and Runge-Kutta methods. Concerning the energy conservation, it can be seen that our methods have a long time conservation not only for normal magnetic fields but also for strong ones.

\section{Conclusions} \label{sec:conclusions}
{
In this paper,    symplectic  methods
 for solving the charged-particle dynamics (CPD) \eqref{charged-particle sts-cons} were presented and studied.
 By employing some transformations of the system and methods, we derived symplecticity conditions for
a novel  kind of adapted exponential methods, and based on which,
two  symplectic methods up to order four were constructed for solving
CPD in a strong and homogeneous magnetic field.
Rigorous error estimates were presented  and the proposed second order symplectic method was shown to have   uniform error bound in the position w.r.t. the strength of the magnetic field.
 Furthermore,   the extension of the obtained symplectic methods to
the case of non-homogeneous magnetic fields was discussed. Three novel algorithms up to order four were constructed
and one method was proved to have uniform accuracy in the position.
Some numerical
tests on homogeneous and non-homogeneous magnetic fields were  presented to confirm the theoretical results and  to demonstrate the numerical behaviour in accuracy and energy conservation.

Last but not least, we point out that the numerical results of Problems 1--3 demonstrate  a very good long time energy conservation for the methods presented in this paper. To theoretically prove this property, the strategies named as
  backward error analysis \cite{Hairer2017,Lubich2017,Hairer2002,wangwu2020} and modulated Fourier expansion \cite{Hairer2018,Hairer2020,Hairer2002,Lubich2020,WZ} will be employed  for CPD under   normal and strong magnetic fields, respectively.  The rigorous analysis on this topic will be considered in our next work.  Another issue for future exploration is the study of uniform higher-order symplectic  integrators. Besides, a combination of symplectic  integrators and the Particle-In-Cell (PIC) approximation and its analysis are of great interests for Vlasov equations   \cite{VP1,Zhao,VP4,VP5,VP6}.

  }


\end{document}